\numberwithin{equation}{section}
\newtheorem{theorem}{Theorem}[section]
\newtheorem{lemma}[theorem]{Lemma}
\newtheorem{proposition}[theorem]{Proposition}
\newtheorem{corollary}[theorem]{Corollary}
\theoremstyle{definition}
\newtheorem{example}[theorem]{Example}
\newtheorem{problem}[theorem]{Problem}
\numberwithin{equation}{section}
\def\nin{\noindent}
\def\a{{\alpha}}
\def\b{{\beta}}
\def\g{{\gamma}}
\def\k{{\kappa}}
\def\l{{\lambda}}
\def\f{{\phi}}
\def\r{{\rho}}
\def\s{{\sigma}}
\def\t{{\tau}}
\def\w{{\omega}}
\def\CC{{\mathcal C}}
\def\TT{{\mathcal T}}
\def\GG{{\mathcal G}}
\def\al{{\aleph}}
\def\rk{\mathop{\rm rk}}
\def\Hom{\mathop{\rm Hom}\nolimits}
\def\Ext{\mathop{\rm Ext}\nolimits}
\def\Im{\mathop{\rm Im}}
\def\fg{finitely generated}
\def\cg{countably generated}
\def\fp{finitely presented}
\def\cp{cyclically presented}
\def\tf{torsion-free}
\numberwithin{equation}{section}
\def\<{\langle}
\def\>{\rangle}
\long\def\alert#1{\smallskip{\hskip\parindent\vrule%
\vbox{\advance\hsize-2\parindent\hrule\smallskip\parindent.4\parindent%
\narrower\noindent#1\smallskip\hrule}\vrule\hfill}\smallskip}
\numberwithin{equation}{section}
\def\CC{{\mathcal C}}
\def\GG{{\mathcal G}}
\def\TT{{\mathcal T}}
\def\nin{\noindent}
\def\a{{\alpha}}
\def\b{{\beta}}
\def\g{{\gamma}}
\def\k{{\kappa}}
\def\l{{\lambda}}
\def\f{{\phi}}
\def\r{{\rho}}
\def\s{{\sigma}}
\def\t{{\tau}}
\def\w{{\omega}}
\def\al{{\aleph}}
\def\Hom{\mathop{\rm Hom}\nolimits}
\def\Ext{\mathop{\rm Ext}\nolimits}
\def\Im{\mathop{\rm Im}}
\def\End{\mathop{\rm End}\nolimits}
\def\Aut{\mathop{\rm Aut}\nolimits}
\def\Pext{\mathop{\rm Pext}\nolimits}
\def\ann{\mathop{\rm ann}\nolimits}
\def\gen{\mathop{\rm gen}}
\def\rk{\mathop{\rm rk}}
\def\fg{finitely generated}
\def\fp{finitely presented}
\def\tf{torsion-free}
\def\vd{valuation domain}
\def\vds{valuation domains}
\def\Dd{Dedekind domain}
\date\today
\begin{document}

\pagestyle{plain}

\subjclass{Primary 13C05, 13F99. Secondary 13G05}

\font\bftwo=cmbx10 scaled\magstep{1.2}

\title {On tight submodules \\ of modules over valuation domains}


\author{Peter Danchev}
\address{Institute of Mathematics \& Informatics, Bulgarian Academy of Sciences, 1113 Sofia, Bulgaria}
\email{danchev@math.bas.bg; pvdanchev@yahoo.com}



\author{L\'aszl\'o Fuchs}
\address{Department of Mathematics, Tulane University, New Orleans, Louisiana 70118, USA
{\it and }  1724 Pinetree Cir NE, Atlanta, Georgia  30329, USA}
\email{fuchs@tulane.edu}



\begin{abstract} This note offers an unusual approach of studying  a class of modules inasmuch as it is investigating a subclass of the category of modules over a \vd. This class is far from being a full subcategory, it is not even a category. Our concern is the subclass consisting of modules of projective dimension not exceeding one,  admitting only morphisms whose kernels and cokernels are also  objects in this subclass. This class is still tractable, several features are  in general simpler than in module categories, but lots of familiar   properties are lost.   A number of results on modules in this class are similar to those on modules over rank one discrete \vds\ (where the global dimension is 1). The study is considerably simplified by taking advantage of the general theory of modules over \vds\ available in the literature, e.g. in \cite{FS1}-\cite{FS2}.   Our main goal is to establish the basic features and have a closer look at injectivity, pure-injectivity, and cotorsionness, but we do not wish to enter into an in-depth study of these properties.
\end{abstract}

\maketitle


\section{Introduction} 

Everybody familiar with module theory    over integral domains knows well that the theory simplifies tremendously if  the ring is a \Dd; i.e. the modules have projective dimension (p.d.) at most one.   That  the condition `p.d.1' is so powerful  was recognized by E. Matlis who developed interesting properties under the hypothesis that the field of quotients as a module has p.d.1; see \cite{M0} (in \cite{FS2}  these rings were named Matlis domains in his honor). In order to understand p.d.1 better and to learn more about it, it is natural to try to find out  how the theory would look like over a general  integral domain if one deals only with its modules of p.d. not exceeding $1$, and ignores modules of higher projective dimensions. This means  to investigate a class where the objects are modules of p.d.$\le 1$ and morphisms are required to have both kernels and cokernels of p.d.$\le 1$. This is what we are planning to do in this article over an arbitrary \vd\ (i.e. an integral domain where the ideals form a chain with respect to inclusion).  Valuation domains are the first and obvious choice for such  a study, as they are sufficiently general, but still manageable, and luckily, there is an extensive literature available on them that reveals a lot of information one can take advantage of.

The selected subclass of the module category is not a category;  a primary reason for it is that the usual composition rule of mappings works only under an additional condition. When dealing with modules of such a class, it is soon realized that one has to reassess familiar facts and obvious concepts to fit into the new situation. The usual sum of two morphisms is rarely another one, submodules that belong to this subclass only exceptionally form a lattice, the tensor product of two objects may not belong to this class, etc. But on the other hand, some nice features of discrete rank one \vds\ carry over, like the equality of injectivity and divisibility or the pure-injectivity of cyclically presented torsion modules.  We will also pay attention to pure-injectivity and to the cotorsion property (Sections 7 and 8). Though these concepts are defined to have close resemblance to the familiar module concepts, one should always keep in mind that they are not exactly the same.  The discussion of the submodules in direct sums of \cp\ modules (Section 5) already shows the huge difference from the traditional treatment of modules.
\medskip

Throughout the symbol $V$ will denote a valuation domain (commutative), and $Q$ its quotient field. We will use the notation $K = Q/V$.  The symbol $V^\times$ denotes the multiplicative monoid of non-zero elements of $V$. Torsion and torsion-free modules have the usual meaning.  The abbreviation p.d. denotes projective dimension. The global weak dimension of a valuation domain is $1$. Likewise, $|X|$ denotes the cardinality of the set $X$, and  $\w$ is the smallest infinite  ordinal. We also abbreviate gen~ $M$ to stand for the minimal cardinality of  generating sets of $M$. We use ``countably generated" to mean gen ~$M = \al_0$ (not finite).  Torsion-freeness and flatness are equivalent over \vds; consequently, relative divisibility and purity have the same meaning.
For the construction of valuation domains with prescribed value groups, see e.g. \cite[Chap. II, Section 3]{FS2}.

For a valuation domain $V$, we consider the category $\CC_V$ as the category $V$-Mod of $V$-modules with the usual morphisms. Our main goal is to study the subclass $\CC_V^*$ whose objects are the $V$-modules of p.d.$ \le 1$  and whose  morphisms are required to have kernels and  cokernels that also have p.d.$ \le 1$.   The subobjects are the tight submodules (see Section 2).  While our results are restricted to this subclass,  in proofs we will often use arguments and concepts from the covering category $\CC_V$.

 Modules of projective dimension one have been discussed briefly  in an earlier paper \cite{F7} with emphasis on Pr\"ufer domains, and  some of our present results appeared there in a different context.

 The idea of investigating the subclass $\CC_V^*$  comes from a recent paper \cite{F6} where tight submodules played a dominating role. To deal with classes of modules where both the kernels and the cokernels of the maps were restricted looked strange, but at the same time interesting and  challenging.  We admit, we were first hesitating to get involved in an uncharted territory with no immediate applications in the horizon, but in spite of this we decided to start working on this class,  since we believed that the results could be helpful in a better understanding of the impact of projective dimension one as well as of the role of tightness in submodules. In addition, they might provide counterexamples in unusual situations.  In this paper we begin with exploring this idea, and accordingly, we have been trying to find out not only what can be verified, but also  what is no longer valid in comparison to conventional module theory.

 \medskip


  \section{Tight Submodules}    

\medskip

The fundamental concept we are using throughout is the tightness of submodules in modules of p.d.$\le 1$.  The following definition applies to all rings. Let $B < A$ be modules such that  p.d.$A \le 1$. $B$ is called {\it tight in $A$} if p.d.$A/B \le 1$. Then also p.d.$B \le 1$.  It is evident that direct summands are tight submodules, but tight submodules need not be summands.  The tightness  for p.d.1 that was introduced in \cite{F} and immediately generalized to higher p.d.s  in \cite{BF}, was slightly different: its definition required that p.d.$A/B \le $ p.d.$A$. The only difference between the two variants is that in a free $V$-module {\cp ~ a free submodule} is not  tight in the sense of \cite{F}, but {\it is} in this paper.  (The present version was called {\it t-submodule} in \cite{F7}.)

The following easily verifiable basic rules will be applied, often without explicit reference:

\begin{lemma}\label{1} Let $C < B < A $ be $V$-modules and {\rm p.d.}$A \le 1$. \smallskip

\qquad {\rm (a)}  If $C$ is tight in $B$ and $B$ is tight in $A$, then $C$ is tight in $A$.

\qquad {\rm (b)}  If $C$ and $B$ are tight in $A$, then $C$ is tight in $B$.

\qquad {\rm (c)} If $C$ and $B$ are tight in $A$, then $B/C $ is tight in  $ A/C$.

\qquad {\rm (d)} If $C$ is tight in $A$ and  $B/C $ is tight in  $ A/C$, then $B$ is tight in $A$.
\qed
\end{lemma}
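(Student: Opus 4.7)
The plan is to reduce all four parts of Lemma \ref{1} to the single short exact sequence
$$0 \to B/C \to A/C \to A/B \to 0,$$
which arises from the third isomorphism theorem, combined with the three standard projective dimension inequalities for an arbitrary short exact sequence $0 \to X \to Y \to Z \to 0$:
$$\pd Y \le \max(\pd X,\pd Z), \quad \pd X \le \max(\pd Y,\pd Z-1), \quad \pd Z \le \max(\pd X+1,\pd Y).$$
Since p.d.$A\le 1$ is part of the running hypothesis throughout the lemma, it suffices in each case to check that the appropriate quotient has p.d. at most $1$; the requisite p.d.$\le 1$ for the ``smaller'' modules involved will follow automatically from the definition of tightness and from part (a) applied in advance where needed.

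For (a), I would feed p.d.$B/C\le 1$ and p.d.$A/B\le 1$ into the middle-term inequality applied to the above sequence to conclude p.d.$A/C\le 1$. For (b), I would apply the first-term inequality: p.d.$B/C\le\max(\pd A/C,\pd A/B-1)\le\max(1,0)=1$, so $C$ is tight in $B$ (note p.d.$B\le 1$ is automatic once $C$ is tight in $B$ and p.d.$C\le 1$, or equivalently follows from the sequence $0\to B\to A\to A/B\to 0$). For (c) and (d), the key observation is that $(A/C)/(B/C)\cong A/B$, so after (b) has been established to secure p.d.$A/C\le 1$ and p.d.$B/C\le 1$ for (c), both directions reduce to a tautology once one unpacks the definition of tightness.

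I expect no real obstacle in the argument; the only minor care is in part (c), where to say ``$B/C$ is tight in $A/C$'' one must first know p.d.$A/C\le 1$, and this is exactly where part (a) of the lemma (applied with the roles of $C$ in $B$ in $A$ unchanged) is invoked. Thus the natural order in which to present the proofs is (a), then (b), then (c) and (d) simultaneously using the isomorphism $(A/C)/(B/C)\cong A/B$. The whole lemma is then just a bookkeeping exercise with the three p.d. inequalities, and no properties special to valuation domains are used.
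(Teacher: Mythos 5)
Your proof is correct and is exactly the argument the paper intends: the lemma is stated with no proof (``easily verifiable basic rules''), and the paper's own proof of the neighboring Lemma~\ref{sumcap} uses precisely your technique of feeding the third-isomorphism-theorem sequence $0 \to B/C \to A/C \to A/B \to 0$ into Kaplansky's projective-dimension inequalities. One tiny simplification: in part (c) the bound p.d.$(A/C) \le 1$ is immediate from the hypothesis that $C$ is tight in $A$, so no appeal to part (a) is actually needed there.
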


As already mentioned before, the symbol $\CC_V$ will denote the category of $V$-modules with the usual morphisms,  while $\CC_V^*$ is  the subclass whose objects are the $V$-modules of p.d.$ \le 1$.  E.g. the frequently used localizations $V_S$ at countable multiplicative submonoids $S$ of $V^\times$ are objects of our class $\CC_V^*$. The subobjects of an object $M \in \CC_V^*$ are the tight submodules of $M$. The morphisms in $\CC_V^*$ are those module homomorphisms $\f : M \to N $  ($M,N \in \CC_V^*$) for which Im $\f$ is  tight in  $N$. This also means that Ker $\f$ should be a tight submodule of $M$. Thus morphisms in $\CC_V^*$ have tight kernels and images.   Clearly,  a submodule $A$ of $M$ is tight if and only if the inclusion morphism $A \to M$ belongs to $\CC_V^*$. In order to check whether or not $\CC_V^*$ is a category, one ought to examine the axioms of category theory; in particular, the critical one that says that if $\a: A \to B$ and $\b: B \to C$ are morphisms, then so is the composite map $\b \a: A \to C$.  Unfortunately, this property is seldom satisfied in  $\CC_V^*$. As we will see in a moment, this property  is related to the one that the sum of two tight submodules is again tight -- which holds very rarely. In the next lemma this property is compared to similar properties, in particular to the tightness of the intersection of two tight submodules.

\begin{lemma} \label{sumcap} Let $M$ be a $V$-module of {\rm p.d.}$\le 1$, and $A,B$ two tight submodules of $M$. Consider the following conditions: \smallskip

{\rm (i)}   $A \cap B$ is tight in $A$;

{\rm (ii)} $A \cap B$ is tight in $B$;

{\rm (iii)}  $A$ is tight in $A+B$;

{\rm (iv)}  $ B$ is tight in $A+B$;

{\rm (v)} $A \cap B$ is tight in $M$;

{\rm (vi)} $A \cap B$ is tight in  $A+B$;

{\rm (vii)} $A + B$ is tight in $M$. \smallskip

\nin Then conditions  {\rm (i)-(v)} are equivalent,  {\rm (vi)} follows from each of them, and {\rm (vii)} implies each of them.
\end{lemma}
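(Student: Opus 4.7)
The plan is to interpret each of (i)--(vii) as a projective-dimension condition on a quotient, and then navigate among them using a handful of natural short exact sequences together with the following two standard homological facts (both immediate from the long exact Ext sequence, applied over $V$ where $\text{p.d.}M\le 1$ means $\text{Ext}^2(M,-)=0$):
(H1) in a short exact sequence $0 \to X \to Y \to Z \to 0$, if $\text{p.d.}X\le 1$ and $\text{p.d.}Z\le 1$, then $\text{p.d.}Y\le 1$;
(H2) in the same sequence, if $\text{p.d.}Y\le 1$ and $\text{p.d.}Z\le 1$, then $\text{p.d.}X\le 1$.
Since $A,B$ are tight in $M$, we have $\text{p.d.}A, \text{p.d.}B, \text{p.d.}M/A, \text{p.d.}M/B \le 1$ available throughout.

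First I would record the two isomorphisms $(A+B)/A \cong B/(A\cap B)$ and $(A+B)/B \cong A/(A\cap B)$; these immediately give (i)$\Leftrightarrow$(iv) and (ii)$\Leftrightarrow$(iii). Next, to link (i), (ii) with (v), I would use the exact sequence
\[
0 \longrightarrow A/(A\cap B) \longrightarrow M/(A\cap B) \longrightarrow M/A \longrightarrow 0.
\]
Since $\text{p.d.}M/A\le 1$, applying (H1) to this sequence shows (i)$\Rightarrow$(v), while applying (H2) shows (v)$\Rightarrow$(i). The symmetric sequence with the roles of $A,B$ reversed gives (ii)$\Leftrightarrow$(v). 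Combined with the isomorphisms above, this establishes the equivalence of (i)--(v).

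To deduce (vi) from any of them, I would take the exact sequence
\[
0 \longrightarrow A/(A\cap B) \longrightarrow (A+B)/(A\cap B) \longrightarrow (A+B)/A \longrightarrow 0,
\]
and observe that (i) gives $\text{p.d.}A/(A\cap B)\le 1$ and (iii) (equivalent to (ii)) gives $\text{p.d.}(A+B)/A\le 1$, so (H1) yields (vi). Finally, for (vii), I would apply (H2) to
\[
0 \longrightarrow (A+B)/A \longrightarrow M/A \longrightarrow M/(A+B) \longrightarrow 0,
\]
in which $\text{p.d.}M/A\le 1$ is known; if (vii) holds, (H2) forces $\text{p.d.}(A+B)/A\le 1$, i.e.\ condition (iii), from which the rest of (i)--(v), and therefore (vi), follow.

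There is no real obstacle here---the argument is essentially bookkeeping with short exact sequences---but the one point that requires care is choosing, for each implication, the correct sequence so that the two already-available p.d.$\le 1$ pieces sit in the slots demanded by (H1) or (H2). Once the right sequences are in hand, each implication is a single invocation of the appropriate homological fact, which is why (vi) and (vii) sit asymmetrically relative to (i)--(v): (vi) lacks one of the two hypotheses needed to push back to (i)--(v), and (vii) has the stronger hypothesis that forces them.
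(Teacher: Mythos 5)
Your proof is correct and follows essentially the same route as the paper's: you use the same three short exact sequences, with your facts (H1)/(H2) playing the role of Kaplansky's lemma on projective dimensions in short exact sequences. The only cosmetic difference is that the paper routes (ii) $\Rightarrow$ (v) and (vii) $\Rightarrow$ (iii) through the transitivity properties of its Lemma on tightness in chains rather than through the symmetric exact sequences, which changes nothing of substance.
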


\begin{proof} (i) $\Leftrightarrow$ (iv) as well as (ii) $\Leftrightarrow$ (iii) follows from Noether's isomorphism theorem.

(i) $\Leftrightarrow$ (v).  The third  non-zero module in the exact sequence
$$0 \to A/(A\cap B)\to M/(A\cap B) \to M/A \to 0$$
has p.d.$\le 1.$ We deduce, by the well-known Kaplansky's lemma on p.d.'s in short exact sequences (see e.g. \cite[Lemma 2.4, p. 202]{FS2}) that the first two  non-zero modules in the last exact sequence are simultaneously of p.d. $\le 1$.

(vii) $\Rightarrow$ (ii).  From the exact sequence
$$0 \to (A+B)/A \to M/A \to M/(A+B) \to 0$$
where $M/A$ and $M/(A+B)$ have p.d.$\le 1$ we argue, by the same Kaplansky's lemma that p.d.$(A+B)/A=1$. Hence  p.d.$B/(A\cap B)=1$, and $A\cap B$ is tight in $B$.

 (ii) $\Rightarrow$ (v). By hypothesis $A\cap B$ is tight in $B$, and $B$ is tight in $M$.  Therefore, by Lemma 2.1(a), $A\cap B$ is tight in $M$. Similar argument yields  (i) $\Rightarrow$ (v).

 (vii) $\Rightarrow$ (iii). Because of  Lemma 2.1(b), $A$ and $A+B$ tight in $M$ implies $A$ is tight in $A+B$.

(i)+(ii) $\Rightarrow$ (vi). If  $A/(A\cap B)$ and $(A+B)/A  \cong
B/(A\cap B) $ have p.d.$\le 1$, then the  middle term in the exact  sequence $$0 \to A/(A\cap B) \to (A+B)/(A\cap B) \to (A+B)/A \to 0$$ is likewise of p.d.$\le 1$.
\end{proof}

We are indebted to L. Salce for furnishing us with an example showing that the sum of two tight submodules of a module of p.d.1 need not be tight even if their intersection is tight  (the converse is ruled out by the implication (vii) $\Rightarrow$  (v) in Lemma \ref{sumcap}).  Since the proof requires several results from the theory of \vds\ that are not needed in this paper, we skip the details.

\smallskip

 The relation between the module and its tight submodules is a fundamental issue.
 The following simple fact might provide useful information.

\begin{lemma} \label{gen}  Let $M$ be a $V$-module of {\rm p.d.}$\le 1$.   A tight submodule $C$ of $M$ satisfies $\gen C \le  \gen M.$ \end{lemma}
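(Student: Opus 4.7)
The plan is to lift the inclusion $C \hookrightarrow M$ to a comparison of free presentations, and exploit Kaplansky's theorem that projective modules over a local ring are free.

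First, I would set $\kappa = \gen M$ and fix a generating set $\{x_\lambda\}_{\lambda \in \Lambda}$ of $M$ of cardinality $\kappa$. Form the free module $F = V^{(\Lambda)}$ and the canonical surjection $\pi : F \to M$ sending $e_\lambda \mapsto x_\lambda$. This yields a free presentation
$$0 \to K \to F \xrightarrow{\pi} M \to 0.$$
Since p.d.$M \le 1$ and $F$ is free, $K$ is projective; because $V$ is local, Kaplansky's theorem upgrades this to $K$ being free.

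Next, set $F_C = \pi^{-1}(C)$. The Noether isomorphism theorem gives $F/F_C \cong M/C$, and the tightness of $C$ yields p.d.$M/C \le 1$. Applied to the exact sequence
$$0 \to F_C \to F \to M/C \to 0$$
with $F$ free, the same reasoning shows that $F_C$ is projective, and hence free over the local ring $V$.

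Finally, since the free module $F_C$ embeds in $F = V^{(\Lambda)}$, tensoring with the quotient field $Q$ forces its torsion-free rank to satisfy $\rk F_C \le |\Lambda| = \kappa$. Thus $F_C \cong V^{(\Xi)}$ for some index set $\Xi$ with $|\Xi| \le \kappa$. The restriction $\pi|_{F_C} : F_C \twoheadrightarrow C$ exhibits $C$ as generated by the images of a basis of $F_C$, giving $\gen C \le |\Xi| \le \kappa = \gen M$.

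The main step that genuinely uses the hypotheses is the freeness of $F_C$: we need both the tightness assumption (to know p.d.$M/C \le 1$) and the locality of $V$ (to turn projectivity into freeness via Kaplansky). Once $F_C$ is known to be free, the rank bound from the embedding into $F$ and the surjection onto $C$ finish the argument uniformly in the finite and infinite cases.
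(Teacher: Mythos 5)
Your argument is correct, and it takes a genuinely different (and more uniform) route than the paper. The paper splits into cases: for $\gen M$ finite it invokes Warfield's decomposition of finitely generated modules of p.d.\,$\le 1$ into cyclics and bounds $\gen C$ by the Goldie dimension; for $\gen M$ infinite it forms the pullback $N$ of $M \to M/C$ along a free resolution $F \to M/C$, observes $N \cong F \oplus C$ and counts generators of $N$ from both exact sequences. You instead work inside a single free presentation $F \twoheadrightarrow M$ with $\rk F = \gen M$, pull $C$ back to $F_C = \pi^{-1}(C)$, and use tightness (p.d.\,$M/C \le 1$) together with Kaplansky's theorem to conclude $F_C$ is free; the rank bound then comes from tensoring the inclusion $F_C \hookrightarrow F$ with $Q$. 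This handles the finite and infinite cases simultaneously and replaces the Goldie-dimension and pullback arguments with one standard homological step. The trade-off is that you lean on Kaplansky's ``projective over local implies free'' in full generality, whereas the paper's infinite case only needs the splitting $N \cong F \oplus C$ over the projective $F$; also note that your opening observation that $K = \Ker \pi$ is free is never used and could be dropped. All the individual steps (projectivity of $F_C$ from p.d.\,$M/C \le 1$, exactness of $- \otimes_V Q$, comparison of $Q$-dimensions, and $\gen C \le \rk F_C$ via the surjection $F_C \twoheadrightarrow C$) are sound.
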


  \begin{proof}  If $\gen M = n$ is an integer, then  by Warfield \cite{W} $M$ is the direct sum of $n$ cyclic submodules, so its Goldie-dimension is $n$ (the {\it Goldie-dimension} | to be abbreviated as Gd | of a module $M$ is the supremum of the cardinalities of the sets of non-zero summands in direct sums $\oplus_{i \in I} M_i$ contained in $M$). A submodule $C$ cannot have larger Goldie-dimension, thus $\gen C \le n$.
 \[
        \begin{CD}
                 &&   &  &   0& & 0   \\
                      && & &         @VVV @ VVV \\
                     &&&& H  @= H \\
                      &&  &&        @VV{} V @ VV{}V \\
                   0      @>>>    C @>>>   N @>>>   F@>>>    0 \\
                    &&   ||   &&   @VV{}V            @VV{\psi}V  \\
                 0         @>>>   C  @>>>       M  @>{\f}> >  M/C  @>>>    0\\
                     && &&         @VVV @ VVV \\
                      &&&&     0 && 0
                             \end{CD}
  \]
  \indent    If  $\gen M = \k$ is an infinite cardinal, then consider a free resolution $0 \to H \to F \to M/C \to 0$ where $\gen F = \k$. The pullback $N$ of $\f: M \to M/C$ and $\psi: F \to M/C$ fits in the vertical exact sequence $0 \to H \to N \to M \to 0$. (See the commutative diagram with exact rows and columns.) Obviously, the free submodule $H$ of the free $F$ satisfies $\gen H \le \gen F =\k$, whence $\gen N \le \gen H + \gen M =\k$ and $\gen N \ge \gen M=\k$ follow. Thus $\gen N =\k$, and as $N \cong F \oplus C$, the inequality $\gen C \le \gen N = \k$ becomes evident.
\end{proof}

Next we verify a result on the tightness of modules in a chain.  Keep in mind that being tight is not an inductive property. $($Continuity in the next lemma means that $M_\r= \cup_{\s<\r } M_\s$
whenever $\r$ is a limit ordinal.$)$

  \begin{lemma} \label{chain} Suppose
$$   0 = M_0 < M_1 < \dots < M_\s < \dots < M_\t =M    \leqno (1)$$
is a continuous well-ordered ascending chain of submodules of the module $M$ with union $M$ such that  the modules $M_\s \ ( \s < \t )$ have {\rm p.d.}$1$.   If each module is tight in its immediate successor, then each module is tight in all of the successor modules of the chain,  and also in $M$. Furthermore,  {\rm p.d.}$M =1$.
\end{lemma}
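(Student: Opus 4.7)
The plan is to proceed by transfinite induction on $\rho$, proving simultaneously that p.d.$M_\rho \le 1$ and that $M_\sigma$ is tight in $M_\rho$ for every $\sigma < \rho$. For $\rho < \tau$ the first claim is given by hypothesis, so only the tightness requires proof; setting $\rho = \tau$ at the end then yields all assertions of the lemma.

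The successor step is routine. At $\rho = \mu + 1$, for any $\sigma < \mu$ the inductive hypothesis gives $M_\sigma$ tight in $M_\mu$, while $M_\mu$ is tight in $M_{\mu+1}$ by hypothesis, so Lemma~\ref{1}(a) yields $M_\sigma$ tight in $M_{\mu+1}$. If $\rho = \tau = \mu + 1$, the remaining claim p.d.$M_\tau \le 1$ follows from Kaplansky's lemma applied to $0 \to M_\mu \to M_\tau \to M_\tau/M_\mu \to 0$, since both flanking terms have p.d.$\le 1$.

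The main obstacle will be the limit step. At a limit ordinal $\rho \le \tau$ and a fixed $\sigma < \rho$, I would consider the continuous well-ordered ascending chain
\[ 0 = M_\sigma/M_\sigma < M_{\sigma+1}/M_\sigma < \cdots < M_\mu/M_\sigma < \cdots \qquad (\sigma \le \mu < \rho), \]
whose union equals $M_\rho/M_\sigma$ by continuity of the original chain. Each term has p.d.$\le 1$ by the inductive hypothesis, and the successive quotients $(M_{\mu+1}/M_\sigma)/(M_\mu/M_\sigma) \cong M_{\mu+1}/M_\mu$ have p.d.$\le 1$ by the hypothesis on the original chain. I would then invoke the classical Auslander-type theorem -- the union of a smooth chain of modules of p.d.$\le n$ with successive quotients of p.d.$\le n$ again has p.d.$\le n$ (available, for instance, in \cite{FS2}) -- to conclude p.d.$(M_\rho/M_\sigma) \le 1$. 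This establishes tightness of $M_\sigma$ in $M_\rho$, and specializing to $\sigma = 0$ simultaneously handles the case $\rho = \tau$ of the remaining p.d.\ claim when $\tau$ is a limit.

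Setting $\rho = \tau$ then collects the full conclusion: each $M_\sigma$ with $\sigma < \tau$ is tight in every subsequent module $M_\rho$ of the chain, hence in particular in $M = M_\tau$, and p.d.$M \le 1$.
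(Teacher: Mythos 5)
Your proposal is correct and follows essentially the same route as the paper: the key step in both is to apply Auslander's criterion on unions of continuous chains to the quotient chain $M_\mu/M_\sigma$ (the "subchain starting at $M_\sigma$"), whose successive quotients are the $M_{\mu+1}/M_\mu$ of p.d.$\le 1$, yielding p.d.$(M_\rho/M_\sigma)\le 1$ and hence tightness, with the case $\sigma=0$ giving p.d.$M\le 1$. The transfinite-induction scaffolding you add is harmless but not needed beyond what the single application of Auslander's lemma already delivers.
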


\begin{proof} Apply Auslander's familiar criterion on the p.d.  of the union of a chain (see e.g. \cite[Chap. VI, Lemma 2.6]{FS2}) to the subchain  starting at $M_\s$,  to argue that p.d.$M_{\s+\r+1}/M_{\s+\r} \le 1$ for all ordinals $\r \ge 0$ implies that $M_\s$ is tight in every successor in the chain.
 That p.d.$M =1$ follows from the same lemma.  \end{proof}

 Let us agree that when we say ``object", we will mean   an object of the subclass $\CC_V^*$ of $\CC_V$   (remember: all objects have p.d.$\le 1$, and all subobjects are tight!). Manifestly,
 $\CC_V^*$  is closed under direct summands and direct sums; the projections onto direct summands and the embeddings of direct summands are morphisms in $\CC_V^*$. In the opposite direction,  we observe that neither the sum nor the intersection of two subobjects is  necessarily a subobject, and the union of an ascending chain of objects need not be an object.   The class $\CC_V^*$ is not additive in general; it {\it is} of course if  $V$ is a discrete \vd\ (DVD).


\medskip

\section{Fundamental Objects} \medskip        

Because of the  restrictive  composition rule of morphisms in our class, we have to learn from scratch if and how some of the familiar basic  concepts have to be modified to fit in. One should not be surprised if in some cases hard-to-believe situations have to be accepted.

\smallskip

{\bf Cyclically presented objects.} The simplest objects in our class  $\CC_V^*$  are the cyclically presented modules:  $V r/V s$   ($r , s \in V$) where $Vs  \le Vr$ are principal ideals. All cyclic submodules of $V r/V s$ are subobjects. Later on we will see that these are the only subobjects of $V r/V s \cong V(rs^{-1}) \le V$. Accordingly, only the principal ideals are subobjects in object $V$.

Multiplications by ring elements induce endomorphisms of $V r/V s$ whose kernels and images are subobjects. Morphisms  between cyclically presented modules are the same in $\CC_V^*$ as  in $\CC_V$,  therefore the  $V$- as well as the $\CC_V^*$-endomorphism ring of a \cp\ object is local.

\smallskip

{\bf Finitely generated objects.} The finitely generated objects behave very nicely in $\CC_V^*$. It is an elementary result that a finitely generated module over a valuation domain is of p.d.$\le 1$ if and only if it is finitely presented (see \cite[Proposition 4.1, p. 83]{FS1}), and then it is the direct sum of a finite number of cyclically presented modules (see  Warfield \cite{W},  or  \cite[p.159]{FS2}).
 Thus in our class  $\CC_V^*$, the finitely generated objects are  the finitely presented $V$-modules. Note that the annihilators of elements in such a module are principal ideals (the ideal 0 is included), thus subobjects.

  \begin{example} \label{\fp}  Let $\{x,y,z,w\}$ be a generating set of the module $M$ with defining relations  $rx =0,\ sy=0,\ ax+by +cz =0, ex +dw=0$ where $r,s, a, b,c,d,e$ are non-units in $V^\times$. The structure of  $M$ depends to a great extent on the divisibility relations between these ring elements.
 Assume e.g. the following proper divisibility relations: $c \mid d \mid b \mid e \mid a \mid s \mid r$.  We can choose a different generating set: $\{x,y,z',w' \}$ with $z' = z+ ac^{-1}x+ bc^{-1}y,\ w' = w+ ed^{-1}x$; then the new relations will be $rx =0,\ sy=0, \  cz'=0,\ dw'=0$.   This shows that the \fp\ module $M$ is the direct sum of four \cp\ submodules  generated by $x,y,z', w'$ with annihilators $Vr, Vs, Vc, Vd$, respectively.
 \qed \end{example}

If $B$ is a finitely generated submodule of a finitely presented module $A$, then $A/B$ is  finitely presented, so it has p.d.$\le 1$. Thus $B$ is a subobject in $A$. If $C$ is tight in a \fp\ module $A$, then p.d.$A/C \le 1$ implies that $A/C$ is \fp. Hence $C$ is \fg. Therefore, the subobjects in a \fg\ object are precisely the \fg\ submodules.
 An immediate consequence of this fact is  the following corollary. It is telling us that the subclass of \fp\ objects behaves the same manner in $\CC_V^*$ as in $\CC_V$.

\begin{corollary} \label{monoid} {\rm 1)} Let $\a : A \to B$ and $\b : B \to C$ be  morphisms between finitely presented objects in $ \CC_V^*$. Then the composite map $\b \a$ is also a morphism in $ \CC_V^*$.

 {\rm 2)} The endomorphisms  of  a \fp\ object in  $ \CC_V^*$ form a ring.
 \end{corollary}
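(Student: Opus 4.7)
The plan rests entirely on the key observation made in the paragraph just before the statement: inside a finitely presented object $X$, the subobjects (i.e.\ tight submodules) are precisely the finitely generated submodules. Once this bijection is in hand, the corollary should follow without any deep theory, essentially by a diagram chase plus the standard fact that in a short exact sequence $0 \to K \to M \to N \to 0$ of $V$-modules with $M$ finitely generated and $N$ finitely presented, $K$ is automatically finitely generated.

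For part (1), given $\a:A\to B$ and $\b:B\to C$ with $A,B,C$ finitely presented, I would verify the two conditions for $\b\a:A\to C$ separately. First, $\Im(\b\a)$ is a homomorphic image of $A$, hence finitely generated; being a finitely generated submodule of the finitely presented $C$, it is tight in $C$ by the preceding discussion. This also shows that $\Im(\b\a)$ is itself finitely presented (it has p.d.$\le 1$ as a subobject, and it is finitely generated). Second, applying the standard exactness fact to
\[
0 \to \Ker(\b\a) \to A \to \Im(\b\a) \to 0,
\]
where $A$ is finitely presented and $\Im(\b\a)$ is finitely presented, I conclude that $\Ker(\b\a)$ is finitely generated, and therefore tight in the finitely presented $A$. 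Thus $\b\a$ has tight kernel and tight image, so it is a morphism in $\CC_V^*$.

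For part (2), I would observe that the very same argument, applied to a single $V$-endomorphism $\f:A\to A$ of a finitely presented object, shows that \emph{every} $\f\in\End_V(A)$ is automatically a morphism in $\CC_V^*$: its image is a finitely generated submodule of $A$ (hence tight), and the exact sequence $0\to\Ker\f\to A\to\Im\f\to 0$ forces $\Ker\f$ to be finitely generated (hence tight) by the same reasoning. Consequently $\End_{\CC_V^*}(A) = \End_V(A)$ as sets, and composition in $\CC_V^*$ agrees with composition in $\CC_V$ by part (1). Since $\End_V(A)$ is closed under addition and composition and contains the identity, it is a ring, which gives the desired conclusion.

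The main (mild) obstacle is simply to make sure that the ingredient ``subobject of a finitely presented object $=$ finitely generated submodule'' is invoked cleanly in both directions and that the short-exact-sequence lemma used to pass from finitely presented quotient to finitely generated kernel is correctly applied; no additional result about valuation domains is needed beyond what has already been cited.
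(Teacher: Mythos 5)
Your proof is correct, but it takes a genuinely different route from the paper's. The paper reduces part (1) to the claim that the intersection of two \fg\ subobjects of a \fp\ object is again \fg\ (the relevant submodule being $\Im\a\cap\Ker\b$, whose preimage under $\a$ is $\Ker(\b\a)$), and it establishes that claim via the implication (vii)$\Rightarrow$(vi) of Lemma \ref{sumcap}: the sum of two \fg\ subobjects is \fg, hence tight, hence the intersection is tight in the sum, and tight in a \fg\ module is \fg. You bypass the intersection entirely by working directly with the exact sequence $0\to\Ker(\b\a)\to A\to\Im(\b\a)\to 0$ and the standard fact that a surjection from a \fg\ module onto a \fp\ module has \fg\ kernel; that lemma is valid over any ring and makes the argument more self-contained, at the cost of importing a fact the paper does not invoke (the paper instead leans on its own machinery of tight submodules). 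Both arguments hinge on the same key input from the paragraph preceding the corollary, namely that in a \fp\ object the subobjects are exactly the \fg\ submodules, and you correctly note that $\Im(\b\a)$ is itself \fp\ because it is \fg\ of projective dimension at most one. Your treatment of part (2) is in fact slightly more explicit than the paper's: by observing that \emph{every} $V$-endomorphism of a \fp\ object is automatically a $\CC_V^*$-morphism, you get $\End_{\CC_V^*}(A)=\End_V(A)$, which settles closure under addition as well as under composition --- a point the paper leaves implicit.
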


  \begin{proof} It suffices  to show that the intersection of two \fg\ subobjects is also \fg\  (the sum of two \fg\ submodules is evidently again \fg). The implication (vii)$\Rightarrow $(vi) in Lemma \ref{sumcap} ensures that such an intersection is tight also in the sum of the objects. Tight in  \fg\ is \fg.
  \end{proof}

Important observations on \fg\ submodules in objects are recorded in the following proposition.

  \begin{proposition} \label{fp-sub} {\rm (i)}  Finitely generated submodules in any  object are  $($\fp$)$ subobjects. In a \fg\ object they are the only subobjects.

 {\rm (ii)}  The \fp\ subobjects of an object $M$ in the class $\CC_V^*$ form a sublattice in the lattice of submodules of $M$.

    {\rm (iii)} The sum and intersection of a \fp\  subobject with any subobject are also subobjects.
 \end{proposition}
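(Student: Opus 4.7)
Parts~(ii) and~(iii) reduce to~(i) via the lemmas of Section~2, so the heart of the proof is~(i). The second sentence of~(i) is already established in the discussion preceding Corollary~\ref{monoid}. For the first sentence, I would induct on the number of generators of $B$ and use Lemma~\ref{1}(d): once $Vb_1$ is known to be tight in $M$, the object $M/Vb_1$ lies in $\CC_V^*$ and the image of $B$ there needs one fewer generator, so by induction it is tight in $M/Vb_1$, and Lemma~\ref{1}(d) lifts this to tightness of $B$ in $M$. Thus the task reduces to the cyclic case: for any $b\in M$, show that $Vb$ is a finitely presented subobject of $M$.

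For the cyclic case I would fix a free resolution $0\to F_1\to F_0\to M\to 0$ (with both $F_0, F_1$ free, using p.d.$M\le 1$), lift $b$ to $\tilde b\in F_0$, and identify $\Ann(b)$ with the ideal $V\tilde b\cap F_1\subseteq V\tilde b\cong V$. By adjusting the basis of $F_0$ so that $\tilde b=re_1$ for some $r\in V$ and a basis vector $e_1$, and using the chain structure of ideals in $V$ together with the freeness of $F_1$, one argues that $V\tilde b\cap F_1$ is principal, so $Vb\cong V/\Ann(b)$ is cyclically presented. The same circle of ideas should show $V\tilde b+F_1$ is a free submodule of $F_0$; since tight submodules of a free $V$-module coincide with its free submodules (a short long-exact-$\Ext$ argument against $F_0$), this yields $F_0/(V\tilde b+F_1)\cong M/Vb$ of p.d.$\le 1$, i.e., $Vb$ is tight.

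For~(iii): let $A$ be a f.p. subobject and $B$ any subobject of $M$. Then $M/B\in\CC_V^*$, and the image $(A+B)/B$ of $A$ in $M/B$ is finitely generated. By~(i) applied to $M/B$, this image is a subobject of $M/B$, so $M/(A+B)\cong (M/B)/((A+B)/B)$ has p.d.$\le 1$, proving $A+B$ is tight in $M$. Lemma~\ref{sumcap}, (vii)$\Rightarrow$(vi), then gives $A\cap B$ tight in $A+B$, and Lemma~\ref{1}(a) promotes this to tightness in $M$. Part~(ii) now follows: the sum of two f.p. subobjects is a f.g. subobject by~(iii) and~(i), hence f.p.; the intersection $A\cap B$ is a subobject by~(iii), tight in the f.p. object $A+B$, hence f.g. by the preceding discussion, and thus f.p.

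The main obstacle is the cyclic case of~(i). Neither the principality of $V\tilde b\cap F_1$ nor the freeness of $V\tilde b+F_1$ follows from p.d.$M\le 1$ by formal homological considerations; both rest on the total-order structure of ideals in $V$ and the compatibility one can engineer between the lift $\tilde b$ and a chosen basis of $F_0$ refining $F_1$.
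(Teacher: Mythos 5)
Your reductions agree with the paper's: the induction via Lemma~\ref{1}(d) correctly brings part~(i) down to the cyclic case, your argument for (iii) is the one in the text ($(A+B)/B$ is \fg\ in $M/B$, hence tight there by (i), then Lemma~\ref{sumcap}), and your derivation of (ii) is a mild variant of Corollary~\ref{monoid}. The paper, however, does not prove the cyclic case at all: it disposes of (i) by citing \cite[Lemma~6.4, p.~217]{FS2}, which asserts precisely that \fg\ submodules of modules of p.d.$\le 1$ are tight; the remaining assertions of (i) are the discussion preceding Corollary~\ref{monoid}, as you note.

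The gap is in your cyclic case. The argument hangs on ``a chosen basis of $F_0$ refining $F_1$,'' and no such basis exists in general: a basis of $F_0$ adapted to $F_1$ in this way would exhibit $M=F_0/F_1$ as a direct sum of cyclically presented modules, and Example~\ref{limit} (see the discussion following Example~\ref{ex1}) gives a countably generated torsion object of p.d.$1$ that is not such a direct sum. The principality of $V\tilde b\cap F_1$ can be rescued without stacked bases: any one $s\tilde b\in F_1$ with $s\ne 0$ lies in a \fg\ free direct summand $G$ of $F_1$, torsion-freeness and the total order on ideals then force $r\tilde b\in G$ for every $r$ with $r\tilde b\in F_1$, and so $\ann(b)$ is the annihilator of the image of $\tilde b$ in the \fg\ module $F_0'/G$ of p.d.$\le 1$ (for a suitable \fg\ free direct summand $F_0'$ of $F_0$ containing $G$ and $\tilde b$), which is \fp\ and hence, by Warfield's decomposition, has only principal annihilators. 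But the second half of your sketch --- the freeness of $V\tilde b+F_1$ --- is exactly equivalent to the tightness of $Vb$ you are trying to prove (a tight submodule of the free module $F_0$ has p.d.$0$ by the dimension count in short exact sequences and is then free by \cite{K}, and conversely), and ``the same circle of ideas'' does not deliver it: knowing $\ann(b)=Vs$ only says that $V\tilde b+F_1$ is a torsion-free extension of the free module $F_1$ by $V/Vs$, and the freeness of such an extension is not a formal consequence of that description. So the central claim of (i) is left unproved; the economical repair is to quote \cite[Lemma~6.4, p.~217]{FS2} as the paper does, or else to push the localization-to-$G$ argument above one step further so that it also yields p.d.$(F_0/(V\tilde b+F_1))\le 1$.
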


\begin{proof} (i) This is an immediate consequence of \cite[Lemma 6.4, p. 217]{FS2}, which ensures that finitely generated submodules of modules of p.d.$\le 1$ are tight. The second part of claim (i) was already stated above.

(ii)  This follows from Corollary \ref{monoid}. 

   (iii) Let $A$ be a \fp\ and $B$ an arbitrary subobject of $M$.  The module $(A+B)/B$ is a \fg\ submodule of $M/B$,  therefore, it is tight in $M/B$. Hence $A+B$ is tight in $M$.  The rest follows from Lemma \ref{sumcap}.
\end{proof}

\smallskip

 {\bf Uniserial objects.}  Most important objects are the {\it uniserial} modules (also called serial modules in the literature): these are defined as modules whose submodules form a chain with respect to inclusion.  The obvious examples in $\CC_V$ are the so-called {\it standard uniserial modules}: the field of quotients, $Q$, as well as its submodules and submodules of their epic images, i.e. modules of the form $J/I$ where $0 \le I < J \le Q$ are submodules.  By Osofsky (see \cite{O} or also \cite[Chap. VI, Sect. 3]{FS2}), the p.d. of a submodule $J$ of $Q$ over a valuation domain is  an integer $n \ge 0$ if and only if gen $J = \al_{n-1}$ ($\al_{-1}$ means ``finite"). Hence uniserial objects in $\CC_V^*$   are at most countably generated. However,  | as noted before | a countably generated ideal $J$ is not
a subobject of $V$ in $\CC_V^*$; indeed, as p.d.$V = 0$ and p.d.$J= 1$ imply p.d.$V/J= 2$. A torsion uniserial ought to have p.d.$\le 1$ to belong to $\CC_V^*$, therefore only those standard uniserials $J/I$ are objects for which $J$ is at most countably generated and $I$ is cyclic, since p.d.$J/I \leq 1$ implies that $J/I$ is coherent; see e.g.  \cite[Chap. IV, Theorem 4.3]{FS1}. Thus all the annihilator ideals of elements in a uniserial object are
principal ideals. Hence it follows that the only proper subobjects of uniserial objects are
 cyclically presented. The non-standard uniserials (that play an important role in the theory of \vds, cf. \cite{BFS} or \cite{FS2})  will now be ignored, since their p.d. always exceeds 1.

A convenient way to deal with uniserial objects is to view them in the form $J$ (\tf\ case) or $J/V$ (torsion case), where $J$ is  either $Q$  (provided it is countably generated) or an at most \cg\  proper submodule of $Q$ containing $V$. Then it is trivial to answer the question of isomorphism of uniserial modules: $J \cong J'$  if and only if $J = rJ'$ or $J' = rJ $ for some $r \in V^\times$ (i.e. $J' = qJ$ for some $0 \ne q \in Q$), while $J/V \cong J'/V$ if and only if $J =J'$. Also, it makes sense to talk about total order even in the set of torsion uniserial objects in $\CC_V^*$: the order relation being induced by the natural inclusion relation  of the numerators $J$.

 Let $U$ be a uniserial object and $r \in V^\times$. It is obvious what is meant by $rU$. But it also makes sense to write
$r^{-1}U. $ Indeed, this denotes the  uniserial object $U'$ that satisfies $rU' = U$; it is unique up to isomorphism.

From the description of the proper  subobjects of a countably generated uniserial object  $U \in \CC_V^*$
 it follows that their only endomorphisms  are the automorphisms and the map to the zero submodule, that is, $\End_{\CC_V^*}(U) = \Aut_V(U) \cup \{ 0 \}$.
It is well known that the endomorphism ring of a uniserial module   in $\CC_V$ is a local ring (see Shores--Lewis \cite{SL}),  and  modules with local endomorphism rings enjoy the Exchange Property.

 The Exchange Property is one of the properties most frequently investigated about the behavior of summands.  Recall that a module  $A$ has the (finite) {\it Exchange Property} if direct decompositions $M = A \oplus B = C \oplus D$ of any module $M$ imply that there is another decomposition of the form $M = A \oplus C_1 \oplus D_1$ such that $C_1 \le C, D_1 \le D$.  Furthermore, $A$ has the {\it Cancellation Property} if $A \oplus B \cong A \oplus C$ for arbitrary modules $B,C$ implies $B \cong C$. Finally, $A$ enjoys the {\it Substitution Property} if $M = A_1 \oplus B = A_2 \oplus C$ with $A_1 \cong A_2 \cong A$ implies  the existence of a submodule $A'  \le M$ such that $A' \cong A$  and $M = A' \oplus B = A' \oplus C$. (See \cite{F0}.) We note that, if an object $A \in \CC_V^*$ has either the Exchange, or the Cancellation, or the Substitution Property
 as a module in $\CC_V$, then it also displays the same property in the class $ \CC_V^*$, since this class is closed under direct summands and direct sums.

In view of the preceding remarks, from \cite[Corollary 2.3, p. 342]{FS2} and \cite[p.181]{FS2} we derive the following theorem.

\begin{theorem} \label{end-u}  Finite direct sums of uniserial objects in the class  $\CC_V^*$ enjoy   all of the cancellation,  exchange and substitution properties. \qed
\end{theorem}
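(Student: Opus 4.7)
The plan is to reduce the statement to the classical result, in the full category $\CC_V$, that a finite direct sum of modules with local endomorphism rings enjoys the three listed properties. The first input I would cite is the theorem of Shores--Lewis (already quoted in the text) asserting that the $V$-endomorphism ring of any uniserial $V$-module is local. Together with Azumaya-type arguments (recorded in \cite[Corollary 2.3, p.~342]{FS2} and the discussion on \cite[p.~181]{FS2}), this gives at once that every finite direct sum $M = U_1 \oplus \dots \oplus U_n$ of uniserial $V$-modules has the exchange, cancellation, and substitution properties as a $V$-module, i.e.\ inside $\CC_V$.

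The second, and really the only non-automatic, step is to transfer each property from $\CC_V$ to the subclass $\CC_V^*$. The paragraph immediately before the theorem already records the transfer principle; the reason it is valid is that a direct summand of an object of $\CC_V^*$ is tight in it and therefore a subobject, and the canonical projections, inclusions, and the isomorphisms witnessing cancellation or substitution all have kernels and images that are direct summands, hence are $\CC_V^*$-morphisms. Concretely, for the Exchange Property, given decompositions $M = A \oplus B = C \oplus D$ with all four summands in $\CC_V^*$, the $\CC_V$-level conclusion $M = A \oplus C_1 \oplus D_1$ with $C_1 \le C,\ D_1 \le D$ automatically lives in $\CC_V^*$: $C_1$ and $D_1$ are summands of objects of $\CC_V^*$, hence are themselves objects, and all the structural maps are morphisms in $\CC_V^*$. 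The analogous remark takes care of substitution and of cancellation.

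I do not foresee any genuine obstacle: the argument is essentially a citation of the existing result in $\CC_V$ plus the soft transfer argument; the only thing worth writing out carefully is why the transfer principle applies uniformly to each of the three properties, which amounts to the observation that in all three cases the submodules one needs to produce are direct summands of objects of $\CC_V^*$ and the maps involved split. Thus the proof consists in little more than combining Corollary~\ref{monoid}-style closure of $\CC_V^*$ under summands with the quoted references.
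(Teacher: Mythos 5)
Your proposal is correct and follows essentially the same route as the paper: the paper likewise derives the theorem from the Shores--Lewis result that uniserial modules have local endomorphism rings, the classical consequences recorded in \cite[Corollary 2.3, p.~342]{FS2} and \cite[p.~181]{FS2}, and the transfer remark that a property of this kind passes from $\CC_V$ to $\CC_V^*$ because the class is closed under direct summands and direct sums. Your more explicit justification of the transfer step (summands of objects are objects, and the relevant projections and inclusions are $\CC_V^*$-morphisms) is exactly the content the paper leaves implicit.
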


 {\bf Maximal uniserial submodules}. By a {\it maximal uniserial submodule in $M$} we mean a uniserial submodule that is not properly contained in another uniserial submodule of $M$.

 \begin{theorem} \label{max-us}  Suppose  $M$ is a  $V$-module of {\rm p.d.}$\le 1$, and   $U$ is a uniserial submodule in $M$. \smallskip

{\rm (i)} If $U$ is  maximal uniserial  in $M$, then it is at most countably generated and has {\rm p.d.}$\le 1$.

 {\rm (ii)} If $U$ is   \cg\ and tight in $M$, then it is a  maximal uniserial submodule in $M$.

  {\rm (iii)} If $U$ is a \cg\ maximal uniserial submodule in a tight submodule  of $M$, then it is also maximal in $M$.
\end{theorem}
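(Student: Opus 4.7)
The plan is to prove the three parts in the order (ii), (iii), (i): part (ii) supplies the core contradiction mechanism, (iii) adapts it to a tight ambient, and (i) requires an additional structural argument.

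For (ii), suppose for a contradiction that $U < W \le M$ with $W$ uniserial, and pick $w \in W \setminus U$. Uniseriality of $W$ makes $Vu$ and $Vw$ comparable for every $u \in U$; since $Vw \le Vu$ would force $w \in U$, we must have $U \le Vw$. The cyclic module $Vw$ is \fp\ and hence tight in $M$ by Proposition~\ref{fp-sub}(i); combined with $U$ being tight in $M$, Lemma~\ref{1}(b) makes $U$ tight in $Vw$. Writing $Vw \cong V/\ann w$ and $U \cong J_0/\ann w$ for an ideal $J_0 \supseteq \ann w$, the tightness requirement p.d.$(V/J_0) \le 1$ forces $J_0$ to be a nonzero principal ideal, making $U$ cyclic and contradicting $\gen U = \al_0$.

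For (iii), let $U$ be countably generated and maximal uniserial in the tight submodule $N$ of $M$, and suppose $U < W \le M$ with $W$ uniserial. Then $W \cap N$ is a uniserial submodule of $N$ containing $U$, so by maximality in $N$ we have $W \cap N = U$; in particular $W \not\le N$, giving some $w \in W \setminus N$. As in (ii), $U \le Vw$; moreover $Vw \le W$ yields $Vw \cap N = U$. Therefore $Vw/U \cong (Vw + N)/N$ embeds as a cyclic submodule of $M/N$, and $M/N$ has p.d.$\le 1$ by the tightness of $N$. Proposition~\ref{fp-sub}(i) then forces p.d.$(Vw/U) \le 1$, and the annihilator calculation from (ii) makes $U$ cyclic, a contradiction.

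For (i), the goal is to prove p.d.$U \le 1$; the conclusion that $U$ is at most countably generated then follows from the earlier observation (via Osofsky's theorem) that uniserial objects of $\CC_V^*$ are at most countably generated. Maximality yields the useful characterization that for every $m \in M$, $U \le Vm$ implies $m \in U$---otherwise the cyclic uniserial $Vm$ would strictly contain $U$. In the torsion-free case this forces $U = M \cap QU$ inside $QM$, so $M/U$ embeds in the $Q$-vector space $QM/QU$ and is torsion-free. Using a free presentation $0 \to K \to F \to M \to 0$ (where $K$ is free since $V$ is local and p.d.$M \le 1$), one lifts $U$ to $U^\# \le F$ and combines the Kaplansky decomposition of $U^\#$ as a direct sum of ideals with the uniseriality of $U^\#/K = U$ and the flatness of $F/U^\# = M/U$ to deduce p.d.$(M/U) \le 1$; Kaplansky's lemma applied to $0 \to U \to M \to M/U \to 0$ then delivers p.d.$U \le 1$. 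The torsion case is parallel, with the additional ingredient that the tightness of every cyclic $Vu \le M$ (by Proposition~\ref{fp-sub}(i)) forces $\ann u$ to be principal. The principal obstacle is the final step in (i)---bridging the gap between flatness of $M/U$ (which is immediate) and p.d.$(M/U) \le 1$, which is not automatic over a valuation domain and requires a delicate combined use of the maximality hypothesis and the Kaplansky structure of submodules of free $V$-modules.
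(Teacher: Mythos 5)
Your treatments of (ii) and (iii) are correct and are essentially the paper's own argument: in both cases one reduces to a cyclic uniserial $Vw$ properly containing $U$, observes that the cyclic quotient ($Vw/U$ inside $M/U$ in case (ii), $(Vw+N)/N$ inside $M/N$ in case (iii)) sits in a module of p.d.$\le 1$ and is therefore cyclically presented, and concludes that $U$ itself must be cyclic, contradicting $\gen U=\al_0$. Your detour through Lemma~\ref{1}(b) instead of working in $M/U$ is immaterial, and your explicit verification that $W\cap N=U$ in (iii) is a detail the paper leaves implicit.

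Part (i) is where the proposal has a genuine gap. The paper does not prove (i); it simply cites \cite[Chap.~VI, Lemma~6.7]{FS2}. Your sketch instead tries to establish the \emph{stronger} statement that $U$ is tight in $M$ (i.e.\ p.d.$(M/U)\le 1$) and then recover p.d.$U\le 1$ from Kaplansky's lemma; but the decisive step --- getting from flatness of $M/U$ to p.d.$(M/U)\le 1$ --- is precisely what you label ``the principal obstacle'' and never carry out, and the tool you propose for it, a ``Kaplansky decomposition'' of the preimage $U^{\#}\le F$ as a direct sum of ideals, is not available over a general valuation domain (such rank-one decompositions of submodules of free modules are a Dedekind-domain phenomenon; Section~5 of the paper is largely about how badly they fail here). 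Note also that you cannot repair (i) by appealing to Theorem~\ref{height}, since that theorem's proof relies on the present statement. As written, (i) remains unproved, and the intermediate claims used to approach it are themselves unjustified.
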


\begin{proof} (i) See \cite[Chap. VI, Lemma 6.7]{FS2}.

(ii)   By way of contradiction assume $U$ is not maximal in $M$, i.e. there exists a uniserial $U' \le M$ that contains $U$ properly. We may assume that $U'$ is cyclic, say, $U' = Va$ for $a \in M$.  Then $Va/U$ is a non-zero cyclic submodule in the
module $M/U$ which has p.d.1 by hypothesis. Hence it follows that $Va/U$ is tight in $M/U$, so $U$ must be cyclic. This contradiction completes the proof of (ii).

(iii) The proof is similar to that of (ii). If $U$ is a maximal uniserial in a tight submodule $N$ of $M$ and contained in a larger uniserial $U' \le M$ that is cyclic, then $U'/U \cong (U'+N)/N$  is a cyclic submodule in $M/N$, so \cp. Again we can conclude that $U$ must be cyclic.
\end{proof}

   An immediate consequence of this theorem is that in a direct sum of \cp\ objects all  uniserial subobjects are also \cp.
\smallskip

{\bf Mixed modules as objects.}  An object in $\CC_V^*$ that is  mixed in the usual sense (i.e. neither torsion nor \tf)  need not have a 1-dimensional torsion part.  Actually,  the torsion submodule of a mixed module of p.d.1 can have any  p.d. not exceeding the maximal p.d. of \tf\ $V$-modules minus 1 whenever this number is $\ge 1$. Indeed, select a \tf\ $V$-module $N$ of p.d. $n \ge 2$ and set $N = F/G$ with a free $V$-module $F$. Let $H$ be an essential free submodule of $G$, and define $M = F/H$. Then $M $ is an object with torsion submodule $G/H$ of p.d. ~$n-1$.

 Even if the torsion submodule of a mixed object is an object, it need not be a subobject. Therefore, it seems reasonable to consider an object {\it mixed} in $\CC_V^*$ if its torsion submodule is a non-zero proper subobject. For an injective object in $\CC_V^*$ that is a mixed module in $\CC_V$, but not in $\CC_V^*$, see Example \ref{non-mixed} infra.

 \smallskip

{\bf Countably generated objects.}    Suppose $M$ is the union of a countable ascending  chain $M_n \ (n < \w)$ of \fp\ modules. Then each $M_n$ is  tight in its immediate successor and hence also in $M$ which will have p.d.$1$ (cf. Lemma \ref{chain}). Moreover, since every finitely generated submodule of $M$ is contained in some $M_n$, all finitely generated submodules of $M$ are tight in $M$ and finitely presented, so subobjects. Cyclic subobjects are cyclically presented, thus the annihilators of elements in $M$ are principal ideals (i.e. also objects).  Submodules that are countably generated may or may not be subobjects in countably generated objects. E.g. the countably generated ideals of $V$ are objects, but not subobjects of $V$.

 Claim (i) in our next theorem shows that the module $M$ in  the preceding paragraph is a typical \cg\ object.

  \begin{theorem} \label{cg}
{\rm (i)} A countably generated $V$-module is of {\rm p.d.}$\le 1$  if and only if it is the union
of a countable ascending chain of finitely presented $($tight$)$ submodules.

  {\rm (ii)} Let $A,B$ be tight submodules in a $V$-module $M$ of {\rm p.d.}$\le 1$ such that $B$ is at most \cg. Then \smallskip

\qquad 	{\rm (a)} $C =A \cap B $ is also at most \cg, and tight in $B$ and also in $A$ and in $M$;

\qquad 	 {\rm (b)} $A + B$ is  of {\rm p.d.}$\le 1$, and $A$ is tight in it.
\end{theorem}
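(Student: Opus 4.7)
My approach is to prove (i) first and then leverage it for both parts of (ii). For the $(\Leftarrow)$ direction of (i), each $M_n$ in the chain is finitely presented (hence of p.d.$\le 1$) and, by Proposition \ref{fp-sub}(i), is tight in $M_{n+1}$, so Lemma \ref{chain} delivers p.d.$M\le 1$. For the $(\Rightarrow)$ direction, I enumerate a countable generating set $\{x_n\}_{n<\w}$ of $M$ and take $M_n=\<x_0,\dots,x_n\>$; each $M_n$ is a finitely generated submodule of the p.d.$\le 1$ module $M$, hence a finitely presented tight subobject by Proposition \ref{fp-sub}(i), and $M=\bigcup_n M_n$.

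For (ii)(a), I apply (i) to $B$ (which has p.d.$\le 1$ since it is tight in $M$) to write $B=\bigcup_{n<\w}B_n$ as an ascending chain of finitely presented tight submodules. The key observation is that
\[
(A+B_n)/A\cong B_n/(A\cap B_n)
\]
is a finitely generated submodule of the p.d.$\le 1$ module $M/A$, so by Proposition \ref{fp-sub}(i) it is finitely presented. Applied to the exact sequence $0\to A\cap B_n\to B_n\to B_n/(A\cap B_n)\to 0$, in which both $B_n$ and the quotient are finitely presented, the standard argument forces $A\cap B_n$ to be finitely generated. Passing to the union, $A\cap B=\bigcup_n(A\cap B_n)$ is countably generated. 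Moreover, $B/(A\cap B)=\bigcup_n (B_n+(A\cap B))/(A\cap B)$ exhibits $B/(A\cap B)$ as the union of a countable chain of finitely presented submodules (each isomorphic to $B_n/(A\cap B_n)$), so part (i) yields p.d.$B/(A\cap B)\le 1$; that is, $A\cap B$ is tight in $B$. Tightness in $M$ follows from Lemma \ref{1}(a) (since $B$ is tight in $M$), and tightness in $A$ from the equivalence (ii)$\Leftrightarrow$(i) in Lemma \ref{sumcap}.

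For (ii)(b), the implication (ii)$\Rightarrow$(iii) of Lemma \ref{sumcap}, combined with the tightness of $A\cap B$ in $B$ just established, gives immediately that $A$ is tight in $A+B$; one application of Kaplansky's lemma to $0\to A\to A+B\to (A+B)/A\to 0$, whose outer terms both have p.d.$\le 1$, yields p.d.$(A+B)\le 1$. The main obstacle, as I see it, is the diagonal step in (ii)(a) that promotes the countable generation of $B$ into finite generation of each $A\cap B_n$; this is where Proposition \ref{fp-sub}(i) does the essential work. Once that hurdle is cleared, everything else is formal bookkeeping with Lemmas \ref{1} and \ref{sumcap} and Kaplansky's lemma on p.d. in short exact sequences.
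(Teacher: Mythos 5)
Your proof is correct and follows essentially the same route as the paper's: both write $B$ as a countable union of finitely presented tight submodules $B_n$, identify $B/(A\cap B)$ with $(A+B)/A=\bigcup_n (A+B_n)/A$ inside the p.d.$\le 1$ module $M/A$, and conclude via the countable-chain criterion of part (i) (equivalently Lemma \ref{chain}). The only differences are cosmetic: you derive the countable generation of $A\cap B$ directly from the finitely-generated-kernel argument on $0\to A\cap B_n\to B_n\to B_n/(A\cap B_n)\to 0$ (the paper gets it from Lemma \ref{gen} applied to the tight submodule $A\cap B$ of $B$), and for (ii)(b) you invoke Lemma \ref{sumcap} plus Kaplansky's lemma where the paper instead writes $A+B$ as the union of the chain of extensions $A+B_n$.
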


  \begin{proof} (i) One way the claim follows from Proposition \ref{fp-sub}, while the converse is taken care of by the last but one paragraph before this theorem.

(ii) (a) $B$ is the union of a countable chain $\{B_n \ | \ n < \w\}$ of \fp\ submodules. Hence $B/C = B/(A\cap B)\cong (A+B)/A = \cup_n (A+B_n)/A $, where the last quotient modules  are \fg\ (epic images of the $B_n$ in $M/A$), and therefore tight in $M/A$.    The p.d. of their union $(A+B)/A$  equals 1, which means p.d.$B/C =1$. Thus $C$ is tight in $B$, and hence in $M$, and then also in $A$.

 (b)  The sum $A+B$ is the union of the chain of the extensions $A +B_n$ of   $A$ by \fp\ modules, so its p.d. is $\le 1$.
  \end{proof}

  Next we give more examples of countably generated objects. By choosing a suitable value group, it is easy to construct  \vds\ that have the required properties.

   \begin{example} \label{ex-cog}
Consider  a uniserial object $U$ generated by  $\{u_i \ (i <\w)\}$. Attach to each $u_i \in U$ a cyclically presented module, say,  generated by $b_i$ such that $r_ib_i =u_i $ where the non-units $r_i \in V^\times$ are required to satisfy the condition that $V/Vr_i$ is properly embeddable in $U/Vu_i$.    Then $U$ is a pure subobject of $B = \langle U, b_i \ (i< \w)\rangle$ such that $B/U$ is a direct sum of cyclically presented modules $\cong Vb_i/Vu_i$.  Since  pure extensions by a \cp\ module are obviously splitting, we obtain:
$B \cong U \oplus (\oplus_{i < \w} Vb_i/Vu_i)$.
\qed \end{example}

  \begin{example} \label{ex-uni}  Let $J, L$ denote submodules of $Q$ containing $V$ that are at most \cg, and let $r\in  V^\times$ be a non-unit. Then one of $J/Vr$ and $L/Vr$, say, the former, admits an isomorphic embedding in the latter, $\f: J/Vr \to L/Vr$, that is the identity on $V/Vr$.  Define $N$ as the push-out of the embeddings $\f: V/Vr \to J/Vr$ and $\psi: V/Vr \to L/Vr$. Then $N$ is an
 extension of its pure submodule $L/Vr$  by $J/V$. A fast calculation shows that
  $N =   L/Vr \oplus J'/V$ where $J'/V = \{(x, \f(x)) \ |\ x \in J/V\} \le J/V  \oplus L/V$.
 \qed \end{example}

{\bf The abundance of subobjects.} It is a remarkable fact that  even if $V$ is not a discrete  rank one \vd,  objects in $\CC_V^*$ contain a large number of tight submodules of all possible sizes. From our discussion above this is clear for finitely and countably generated objects, while for uncountably generated objects it is a consequence of the existence of {\it tight systems}.  Every module $ M$ of p.d.$\le  1$ admits a tight system $\TT$ (over any integral domain). This is defined as a $G(\al_0)$-family of tight submodules; see \cite[Chap. VI, Sect. 5]{FS2}. Recall that, for an infinite cardinal $\k$,  by a {\it $G(\k)$-family} $\GG$ of submodules of a module $M$ is meant a set of submodules such that the following conditions are satisfied: 1) $0, M \in \GG$; 2) $\GG$ is closed under unions; 3) if $X$ is a subset of $M$ of cardinality $\le \k$ and $A \in \GG$, then there exists a $B \in \GG$ such that $X \cup A \subseteq B$ and gen$(B/A) \le \k$.  It follows that in case $\TT$ is a tight system of $M$, then  $A < C \ (A,C \in \TT) $ implies $A$ is tight in $C$. Moreover, under the canonical map the  complete preimages  of the tight submodules in $C/A$ are subobjects of $C$; they are also subobjects in $M$.

An immediate corollary of the existence of tight systems is the next result.

  \begin{corollary} \label{embed} Let $M$ be an object, and $N$ a submodule of $M$. Then $N$ is contained in a tight submodule $N^*$ of $M$ such that $\gen N^* \le \max\{\gen N, \al_0\}$.
\end{corollary}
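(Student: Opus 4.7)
The plan is to invoke the tight system $\TT$ of $M$ (a $G(\al_0)$-family of tight submodules) guaranteed to exist for modules of p.d.$\le 1$, and to build $N^*$ as the union of a transfinite chain inside $\TT$ obtained by repeatedly absorbing one generator of $N$ at a time via condition 3) of the $G(\al_0)$-family.

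Set $\k = \max\{\gen N, \al_0\}$ and fix a generating set $\{x_\a \mid \a < \k\}$ of $N$. Starting from $A_0 = 0 \in \TT$, I would recursively define an ascending chain $\{A_\a\}_{\a \le \k}$ of members of $\TT$ as follows. At a successor stage $\a+1$, apply condition 3) of the definition of a $G(\al_0)$-family with $X = \{x_\a\}$ (a singleton, hence of cardinality $\le\al_0$) and $A = A_\a \in \TT$ to obtain $A_{\a+1} \in \TT$ with $A_\a \cup \{x_\a\} \subseteq A_{\a+1}$ and $\gen(A_{\a+1}/A_\a) \le \al_0$. At a limit stage $\l \le \k$, put $A_\l = \bigcup_{\a < \l} A_\a$; this lies in $\TT$ by closure under unions (condition 2)). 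Finally, set $N^* = A_\k$.

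Then $N^* \in \TT$, so $N^*$ is tight in $M$; and $N \subseteq N^*$ since all the generators $x_\a$ have been absorbed. For the bound on $\gen N^*$, at each successor step lift a countable generating set of $A_{\a+1}/A_\a$ to $A_{\a+1}$; the union of these liftings over all $\a < \k$ gives a generating set of $N^*$ of cardinality at most $\k \cdot \al_0 = \k$.

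There is no real obstacle here beyond unpacking the definition of a $G(\al_0)$-family; the only thing to mind is the cardinal arithmetic in the case $\gen N$ is finite, where the definition of $\k$ still forces $\k=\al_0$ and the construction yields $\gen N^* \le \al_0$, as required.
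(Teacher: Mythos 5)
Your proof is correct and follows essentially the same route as the paper's: both rest on the existence of a tight system $\TT$ of $M$ (a $G(\al_0)$-family of tight submodules) and use its closure properties to absorb the generators of $N$ into a member of $\TT$ of the right size. The paper simply does this in one step---choosing for each generator a countably generated member of $\TT$ containing it and taking the union of all of these---whereas you organize the same absorption as a transfinite chain; the difference is purely one of bookkeeping.
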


\begin{proof} Every generator of $N$ is contained in some \cg\ tight submodule that belongs to a fixed tight system $\TT$ of  $M$. The union of all these members of $\TT$ is a member $N^*$ of $\TT$ containing $N$. By construction,      $N^*$ can be generated by $\gen N \cdot \al_0$ elements.
\end{proof}

A tight system $\TT$ of object $M$ allows us to build a continuous well-ordered ascending chain (1)
of tight submodules $M_\s \in \TT$  for some ordinal $\t$, such that gen($M_{\s+1}/M_\s) \le \al_0$ for all $\s< \t$.  Moreover, since a \cg\ object is  the union of a chain of \fp\ subobjects (Theorem \ref{cg}), the chain (1) can be refined so as to have all of the quotients $M_{\s+1}/M_\s$ finitely, or even \cp.
\smallskip

Another important consequence of the existence of tight systems is that we have already formulated in Proposition \ref{fp-sub}: every \fg\ submodule of a module $M$     (of any size)  of {\rm p.d.}$1$ is \fp\ and tight in $M$.  \medskip

{\bf Projective objects.} A finitely generated torsion-free module over a valuation domain is free. A useful fact: a finite rank pure submodule in a free $V$-module is a summand; see e.g. \cite[Chap. XIV, Theorem 6.1]{FS1}.
By Kaplansky \cite{K},  projective $V$-modules are free. Evidently, they are projective objects in our class $\CC_V^*$ as well. Dimension calculation shows that a tight submodule of a free module must have zero p.d.   Therefore, we can state:

\begin{theorem} \label{subfree} The subobjects of free $V$-modules are the free submodules.  \qed
\end{theorem}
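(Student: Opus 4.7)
The statement is really a biconditional: a submodule $B$ of a free $V$-module $F$ is a subobject of $F$ in $\CC_V^*$ if and only if $B$ is free. Both directions will be handled by one and the same short exact sequence,
$$0 \to B \to F \to F/B \to 0,$$
in which $\pd F = 0$. The two inputs are Kaplansky's lemma on projective dimensions in a short exact sequence (cited above as \cite[Lemma 2.4, p.\ 202]{FS2}) and Kaplansky's theorem \cite{K} that every projective $V$-module is free; both tools have just been invoked in the paragraph preceding the statement.

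For the forward direction, I would assume $B$ is a subobject, so by definition of tightness $\pd F/B \le 1$. Feeding the pair $(\pd F, \pd F/B) = (0, \le 1)$ into Kaplansky's lemma applied to the above sequence forces $\pd B \le 0$, i.e., $B$ is projective, and hence free by Kaplansky's theorem. In particular, every such $B$ is automatically a (free) submodule, and the first claim is secured. For the converse direction, if $B \le F$ happens to be free, then $\pd B = 0$, and the same lemma applied to the same sequence (now with two of the three dimensions equal to $0$) yields $\pd F/B \le 1$, so the inclusion $B \hookrightarrow F$ qualifies as a morphism in $\CC_V^*$ and $B$ is a subobject.

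I do not expect a real obstacle: the whole argument is the one-line dimension calculation flagged immediately before the statement (``Dimension calculation shows that a tight submodule of a free module must have zero p.d.''). The only thing worth verifying carefully is that the converse direction is indeed part of what is being asserted and that it follows for free from the same exact sequence, which it does.
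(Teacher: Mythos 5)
Your proof is correct and is exactly the argument the paper sketches in the paragraph preceding the theorem: Kaplansky's lemma applied to $0\to B\to F\to F/B\to 0$ forces $\pd B=0$, hence $B$ is projective and therefore free by Kaplansky's theorem, and the same sequence gives the converse. Nothing to add.
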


Hence we conclude  that every object $M$ in $\CC_V^*$ admits a free resolution in the form of a short exact sequence: $0 \to H \to F \to M \to 0$ where $H,F$ are free $V$-modules, i.e. free objects.

\medskip


\section{More Fundamental Concepts} \medskip   

 Continuing the review of the basics, we  would like to establish more results concerning the objects in the class  $\CC_V^*$, but in order to  deal with the objects more efficiently, we need several tools available in \cite{FS1} and in \cite{FS2}. In this section we review some concepts and facts we shall need.
\smallskip

{\bf Annihilators of elements.} The study of objects in $\CC_V^*$ is greatly simplified by the fact that the annihilator ideals of elements in objects are not just objects, but they are even principal ideals. This has been pointed  out before, but let us give a formal proof of this property.

\begin{lemma}\label{ann} Let $M$ be an object in $\CC_V^*$. Then for any element $a \in M$, the annihilator $\ann_M(a) = \{ r \in V \ | \ ra=0\}$ is a principal ideal of $V$.
\end{lemma}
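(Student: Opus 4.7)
The plan is to reduce the claim to two already-recorded facts: that finitely generated submodules of objects are finitely presented, and that finitely generated ideals of a valuation domain are principal.

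First I would consider the cyclic submodule $Va \le M$. Since $Va$ is finitely generated and $M$ has p.d.$\le 1$, Proposition \ref{fp-sub}(i) (which rests on \cite[Lemma 6.4, p.~217]{FS2}) tells me that $Va$ is a tight subobject of $M$ and hence is finitely presented. Writing $Va \cong V/\ann_M(a)$, the finite presentation forces $\ann_M(a)$ to be a finitely generated ideal of $V$.

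The second step is then immediate from the hypothesis that $V$ is a valuation domain: any finitely generated ideal $I = Vr_1 + \cdots + Vr_n$ has its generators totally ordered by divisibility, so $I = Vr_j$ for whichever $r_j$ divides all the others. Applied to $I = \ann_M(a)$, this gives that $\ann_M(a)$ is principal, which is exactly the claim.

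There is essentially no obstacle here; the only thing worth emphasizing in the write-up is the invocation of Proposition \ref{fp-sub}(i) (the nontrivial ingredient), after which the conclusion follows purely from the chain condition on ideals of $V$. No new construction or calculation is required.
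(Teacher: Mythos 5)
Your proof is correct, and it is a mild but genuine streamlining of the paper's argument. The paper first places $a$ inside a countably generated tight submodule $N$ (via the existence of tight systems), then inside a finitely presented submodule of $N$ (via Theorem \ref{cg}(i)), and finally invokes the earlier observation that annihilators of elements of finitely presented objects are principal, which itself rests on Warfield's decomposition of such modules into cyclically presented summands. You instead apply Proposition \ref{fp-sub}(i) directly to the cyclic submodule $Va$: it is tight, hence of p.d.$\le 1$, hence finitely presented, so $\ann_M(a)$ is a finitely generated and therefore principal ideal of $V$. Both routes ultimately hinge on the same external input (finitely generated submodules of modules of p.d.$\le 1$ are tight, \cite[Lemma 6.4, p.~217]{FS2}), but yours bypasses the countably generated intermediate step and the Warfield decomposition, replacing them with the elementary facts that a cyclic module $V/I$ is finitely presented exactly when $I$ is finitely generated and that finitely generated ideals of a valuation domain are principal. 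The paper's detour through Theorem \ref{cg} buys nothing extra for this particular lemma; your version is the more economical write-up.
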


\begin{proof}  $M$ has a tight system, so $a \in M$ is included in a \cg\ tight submodule $N$ of $M$, and hence also in a \fp\ submodule (see Theorem \ref{cg} above). For \fp\ objects the claim has been established before.
\end{proof}

\smallskip

{\bf Heights of elements.}  The principal information in describing the way an element is located in the module is stored in its height. Heights of elements are defined by using uniserial modules, see \cite{FS1}. The uniserials that occur as possible heights for  \vds\ have been studied in \cite{BF} and \cite{BFS}.  Fortunately, in modules of p.d.$\le 1$  only most tractable heights can occur.

Suppose  $M$ is an object and $0 \ne a \in M$. Consider maps $\f_J : J \to M$ of the
 submodules $J$ of $Q$ containing $V$ such that $\f_J (1) = a$. For a fixed $a$,  the union  in $Q$ of those $J$'s  for which such a  $\f_J$ exists is a submodule $H_M(a)$ of $Q$, called the {\it height-ideal}
of $a \in M$. The module $$h_M(a) = H_M(a)/V$$
is defined as the {\it height of $a \in M$}.  We call $h_M(a)$   {\it non-limit height} or {\it limit-height}
according as $H_M(a)$ is one of the $J$'s or is not.  In the  limit  case  we write $h_M(a)= U^-$.  Note that
$h_M(a)$ is always a uniserial torsion module; it is of the form $U = J/V$   with $J \subseteq Q$ (equality only in case $Q \in \CC_V^*$).  In the non-limit case, the element $a$ is contained in a uniserial module $W$ that is a maximal uniserial in $M$ such that $h_M(a) = W/Va$.  The heights of elements in a non-standard uniserial are uncountable limit heights | these are out of question in $\CC_V^*$. The set of heights occurring in $\CC_V^*$ is totally ordered in the obvious way once we declare the non-limit height $J/V$ to be
larger than the corresponding limit height  $(J/V)^-$.  The minimum height is 0 (this is the height of the generator in a cyclic module), and we set $h(0) = \infty$ as the maximum height.

\begin{example} \label{limit} To give an example of a limit height, consider  a \cg\ submodule $J$ of $Q$ containing $V$, and choose a properly ascending chain of fractional ideals $ \{Vt_i^{-1}\ | \ i  < \w\}$  with union $J$ (where $t_i \in V^\times$). Define a \cg\ object $X$ as follows: the generators are $x_i \ (i < \w)$ with the defining relations:
 $$rt_0x_0=0,\quad t_0 x_0 = t_ix_i \qquad (i <\w)  \leqno(2)$$
 where $r\in V^\times$ is arbitrary.  The element $t_0x_0$ has limit height, namely $(J/r^{-1}V)^{-}$. To get an idea of what kind of module $X$ is, observe that the cyclic submodules generated by the elements $x_i - (t_i^{-1}t_{i+1}) x_{i+1}$ are summands of $X$ for all $i < \w$ (a complement is the submodule generated by all the given generators with $x_i$ removed). Actually, these cyclic modules generate their direct sum $X'$ in $X$. This $X'$ is tight and pure in $X$, and the quotient $X/ X'$  is a \cg uniserial module containing the coset $x_0 + X'$.
\qed \end{example}

Next we prove the following:

\begin{theorem} \label{height} A non-zero element in an object of the class $\CC_V^*$ has one of the following heights: \smallskip

{\rm (i)} cyclic height;

{\rm (ii)} \cg\ non-limit height;

{\rm (iii)} arbitrary limit height  of standard type. \smallskip

Elements in a \fg\ module cannot have limit heights. \end{theorem}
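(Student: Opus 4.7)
The plan is to split on whether $h_M(a)$ is a limit or non-limit height. The limit case (iii) is essentially free: by construction $H_M(a)$ is a $V$-submodule of $Q$ containing $V$, so $H_M(a)/V$ is automatically of standard type, and no size restriction is needed (non-standard uniserials have already been excluded from $\CC_V^*$).

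For the non-limit case, let $\phi\colon H_M(a)\to M$ be the witnessing map with $\phi(1)=a$, and set $W=\phi(H_M(a))$. The crucial first step is to show that $W$ is a maximal uniserial submodule of $M$. Via Zorn, embed $W$ in a maximal uniserial $W^*\le M$; using Lemma \ref{ann} to write $\ann_M(a)=Vr$, identify $W^*\cong J^*/Vr$ for some submodule $V\le J^*\le Q$ with $a\leftrightarrow 1+Vr$. The equality $H_{W^*}(a)=H_M(a)$ is automatic from the chain $W\le W^*\le M$ and the factorization of $\phi$ through $W^*$. The delicate point is to pin down $H_{W^*}(a)=J^*$: for any $V$-linear $\sigma\colon J\to J^*/Vr$ with $\sigma(1)=1+Vr$ and any non-unit $t\in V$ with $t^{-1}\in J$, writing $\sigma(t^{-1})=x+Vr$ yields $tx\equiv 1\pmod{Vr}$, so $tx\in V$ is a unit (its difference with $1$ lies in the maximal ideal); hence $x\in Vt^{-1}\cap J^*=Vt^{-1}$, and the total ordering of $V$-submodules of $Q$ forces $Vt^{-1}\le J^*$. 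Running $t$ over the denominators of elements of $J$ gives $J\le J^*$, so $H_M(a)=J^*$ and $W=W^*$. I expect this divisibility step inside $J^*/Vr$ to be the main technical obstacle. Once $W$ is known to be maximal uniserial, Theorem \ref{max-us}(i) gives $W^*\cong J^*/Vr$ at most countably generated, and since $Vr$ is cyclic, so is $J^*=H_M(a)$. The split between (i) and (ii) is then immediate: if $J^*$ is finitely generated over $V$, the chain structure of $V$-submodules of $Q$ forces $J^*$ to be cyclic, giving cyclic height (i); otherwise $J^*$ is countably (infinitely) generated and we are in case (ii).

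For the last assertion, if $M$ is finitely generated it is finitely presented, so by Warfield's theorem $M=\bigoplus_{i=1}^{n} V/Vr_i$. With $a=(a_1,\dots,a_n)$ and $a_i=s_i+Vr_i$, any map $\phi\colon J\to M$ with $\phi(1)=a$ decomposes componentwise, so $H_M(a)=\bigcap_{i=1}^{n} H_{V/Vr_i}(a_i)$. A short calculation in each cyclic uniserial yields $H_{V/Vr_i}(a_i)=Vs_i^{-1}$, cyclic over $V$. Since the $V$-submodules of $Q$ form a chain, this finite intersection coincides with the smallest term and is itself cyclic. Hence $h_M(a)$ is cyclic, in particular non-limit.
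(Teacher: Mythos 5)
Your proof is correct, and for the main trichotomy it travels essentially the same road as the paper: the only heights to exclude are uncountably generated non-limit ones, and both you and the authors do this by showing that a non-limit height-ideal $H_M(a)=J^*$ is the ``numerator'' of a maximal uniserial submodule containing $a$, then invoking Theorem \ref{max-us}(i) to bound $J^*$ by $\al_0$ generators (the paper states the maximality of $\f_J(J)$ in one line; your Zorn-plus-divisibility computation inside $J^*/Vr$ is exactly the detail being suppressed there, and your observation that (iii) is vacuously of standard type matches the paper's setup, where $H_M(a)$ is by definition a submodule of $Q$). Where you genuinely diverge is the final assertion about finitely generated modules: the paper disposes of it by remarking that a limit height forces infinite Goldie-dimension, pointing back to Example \ref{limit}, whereas you compute the height outright in a Warfield decomposition $M=\oplus_i V/Vr_i$, showing $H_M(a)=\bigcap_i Vs_i^{-1}$ is the smallest term of a finite chain and hence cyclic. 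Your route is more explicit and proves the stronger statement that every nonzero element of a finitely generated object has \emph{cyclic} (not merely non-limit) height; the paper's Goldie-dimension argument is shorter but leans on an example rather than a computation. Two small points worth tightening: when you identify $W^*\cong J^*/Vr$ you are already using Theorem \ref{max-us}(i) (maximal uniserials in objects have p.d.\,$\le 1$, hence are standard), so that citation belongs at the start of the non-limit case rather than only at the end; and in the componentwise height formula you should note that the intersection is taken over the indices with $a_i\ne 0$ and that the interchange of union and intersection in $H_M(a)=\bigcap_i H_{M_i}(a_i)$ is legitimate precisely because each $H_{M_i}(a_i)$ is attained.
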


\begin{proof} To begin with, observe that for each of (i)-(iii) we already had examples above, so it remains only to show that (i)-(iii) is  a complete list.  The only other heights in $\CC_V$ are uncountably generated non-limit heights.  Working toward contradiction, suppose that  for some $a \in M \in \CC_V^*$, we have $h_M(a) = J/V$ with an uncountably generated submodule $J$ of $Q$.  There is a homomorphism $\f_J : J \to M$ such that $\f_J (1) = a$. The maximal property of $J$ as height
implies that $\f_J(J) $ must be a maximal uniserial in  $M$. Therefore, by Theorem \ref{max-us} it is at most \cg\ | a  contradiction, completing the proof of the first claim.

  That (iii) cannot occur in a \fg\ module is an immediate consequence of the simple fact that limit heights require infinite Goldie-dimension, as is clear from  Example \ref{limit}.
\end{proof}

{\bf Height-gaps.} Suppose $U$ is a uniserial object and $Vr \ne 0$ is the annihilator of $a \in U$. Then $h_U(a) = rU$ and %
$h_U(sa) = s^{-1}rU$ provided that $sa \ne 0$ for $s \in V$.  If $U$ is contained in a $V$-module $M$, then the heights of these elements may be larger in $M$.  In general, in every module $M$, for an element $a\in M$ and its multiple $ra \ (r \in V^\times)$ the inequality
$$h_M(a) \le r^{-1} h_M(ra)$$
holds.
We say that $M$ has a {\it height-gap} at $0 \ne a \in M$   if, $h_M(a) > sh_M(x)$ holds whenever $sx =a $ for some $ x\in M$ and for a non-unit $s\in V$.

\begin{example}  To illustrate height-gaps, let $U$ be a uniserial module,  and  $x_1,x_2, x_3$ symbols.
 Suppose that the non-units $s_i, t_i \in V^\times $ satisfy the following proper divisibility relations: $ s_1 \mid s_2 \mid s_3 $ and $ t_1 \mid t_2 \mid t_3 $. Pick some $ u \in U$ such that $s_3u \ne 0$, and define a module $N$ to be generated by $U$ and by the given symbols subject to the relations
$ s_i u = s_it_i  x_i \ (i=1,2,3).$
The height-gaps in the submodule $U$ are at $s_1u, s_2 u, s_3u$, and at $0$.
\qed \end{example}

     {\bf Purity.}    The main point about this widely used concept that we are emphasizing repeatedly is that in \vds\ it is equivalent to the simpler relative divisibility (see \cite{W0}). Thus a   submodule $N $ is pure  in a $V$-module $M$ if and only if $rN = N \cap rM$ holds for every $r \in V^\times$.   Equivalently, for all $r \in V$, the map $V/Vr \otimes_V N \to V/Vr \otimes_V M$ induced by the inclusion $N \to M$ is monic.  This is tantamount to the injectivity of the  map $\Hom_V(V/Vr, M) \to \Hom_V(V/Vr, M/N)$ for all $r \in V^\times$ induced by the natural homomorphism $M \to M/N $.
 A {\it pure-exact sequence} $0 \to A \to B \to C \to 0$ is an exact sequence in which the image of the map $A \to B$ is pure in $B$.

 \begin{lemma}  {\rm (i)} Let $U$ be a uniserial submodule of an object $M$, and $a \in U$.  If $h_M(a) = U/Va$, then $U$ is a maximal uniserial in $M$, and there is no height-gap in $U$ at $a$ and above.

 {\rm (ii)} If $U$ is a maximal uniserial in $M$ and is torsion with no height-gaps other than the one at $0$, then $U$ is pure in $M$.
\end{lemma}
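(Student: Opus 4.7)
For part (i), I would first settle maximality by a height-comparison argument.  Suppose, for contradiction, that $U$ is properly contained in a uniserial submodule $U' \subseteq M$.  Then the inclusion $U' \hookrightarrow M$ realizes $U'/Va$ as a lower bound on $h_M(a)$, so $h_M(a) \supseteq U'/Va \supsetneq U/Va = h_M(a)$, a contradiction.  Thus $U$ is maximal uniserial in $M$.

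For the absence of height-gaps at $a$ and above, the plan is to propagate the equality $h_M(-) = U/V(-)$ upward within $U$.  Given $c \in U$ with $a = tc$ for some $t \in V$, one has the chain of inequalities $h_U(c) = U/Vc \le h_M(c) \le t^{-1}h_M(tc) = t^{-1}(U/Va)$.  The natural identification of uniserials via the $V$-action on $U$ collapses the outer bound back to $U/Vc$, yielding $h_M(c) = U/Vc$.  Once this equality holds for $c$, the height-realizing map $H_M(c) \to M$ takes values inside $U$, so every decomposition $c = sc'$ (with $s \in V^\times$ non-unit, $c' \in M$) actually occurs inside $U$; the $V$-action on the uniserial $U$ then gives $s\,h_M(c') = h_M(c)$, ruling out a gap at $c$.

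For part (ii), let $u \in U \cap rM$ with $r \in V$ non-unit.  The case $u = 0$ being trivial, write $u = rm$ for some $m \in M$, $u \ne 0$; I aim to show $u \in rU$.  My plan is to verify that the submodule $U + Vm \subseteq M$ is uniserial: once this is known, maximality of $U$ forces $Vm \subseteq U$, whence $m \in U$ and $u = rm \in rU$.  Uniseriality of $U + Vm$ reduces to showing, for each $v \in V$ and $u' \in U$, that the cyclic submodules $Vvm$ and $Vu'$ are comparable; only $v$ outside the principal ideal $\{w \in V : wm \in U\}$ (which contains $Vr$) is problematic.  For such $v$, the relation $rm = u \in U$ together with the no-gap hypothesis on $U$ aligns $vm$ with the uniserial chain of $U$, since otherwise the extra divisibility of $u$ (coming from the factorization $u = rm$ with $m \notin U$) would create a height-gap in $U$ at $u$ or above, contradicting the hypothesis.

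The principal obstacle is making the alignment argument in (ii) rigorous.  The no-gap hypothesis at a non-zero $u \in U$ produces an equation $sx = u$ in $U$ with $h_U(u) = s\,h_U(x)$, but $s$ is not under my control and need not equal the given $r$.  Using the total divisibility in $V$, either $r \mid s$ (so $u \in sU \subseteq rU$) or $s \mid r$ properly, in which case I must replace $u$ by $x$ and $r$ by $r/s$ and iterate.  The key worry is that this descent might not terminate; to close it I expect to invoke Theorem \ref{height} on the admissible shapes of heights in $\CC_V^*$, together with the torsion of $U$ and the maximal uniseriality already assumed, to rule out an infinite strictly descending chain of non-unit residual divisors.
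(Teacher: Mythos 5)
Your part (i) is essentially the argument the paper leaves as ``rather obvious'': a strictly larger uniserial $U'\supsetneq U$ would force $h_M(a)\ge U'/Va\supsetneq U/Va=h_M(a)$, and the sandwich $U/Vc=h_U(c)\le h_M(c)\le$ (the bound imposed by $h_M(a)=U/Va$) disposes of the gaps. That part is fine, modulo the hand-wave about ``the natural identification of uniserials via the $V$-action,'' which is exactly the bookkeeping the reader is trusted to do.

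Part (ii) is where there is a genuine gap, and it is not just the termination worry you flag at the end: the intermediate claim your whole strategy rests on --- that $U+Vm$ is uniserial --- is false in general, even in situations where the purity conclusion holds. Take $M=U\oplus C$ with $C$ cyclic, $0\ne c\in C[r]$, and $u=ru'$ for some $u'\in U$; then $m=u'+c$ satisfies $rm=u$, and $U$ is a summand (hence pure and maximal uniserial, and we may arrange it torsion with no nonzero gaps), yet $U+Vm$ contains $U$ and $u'+c$, hence contains $c$, hence contains $U\oplus Vc$ and is not uniserial. The point is that purity only requires \emph{some} element of $U$ to divide $u$ by $r$; the particular witness $m$ can always be perturbed by torsion outside $U$, so maximality of $U$ can never be brought to bear on $m$ itself. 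Your fallback descent inherits two unresolved defects: after writing $u=sx$ with $x\in U$ and $s\mid r$ properly, the new element $x$ agrees with $(r/s)m$ only modulo $s$-torsion, so you cannot conclude $x\in (r/s)M$ and the inductive hypothesis is not preserved; and even granting that, nothing in Theorem~\ref{height} excludes an infinite strictly descending chain of residual divisors (the value group need not be discrete). The paper's proof goes the opposite way and avoids all of this: if $U$ is not pure, some $u\in U$ has $h_U(u)<h_M(u)$; by maximality some generator of $U$ has the same height in $U$ as in $M$; propagating heights from that generator down to $u$ then forces a height-gap in $U$ at $u$ or above, contradicting the hypothesis. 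You should recast (ii) as this contrapositive height argument rather than trying to patch the divisibility descent.
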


\begin{proof}  (i) This is rather obvious.

(ii) $U$ is not pure in $M$ means that there is $u \in U$ such that $h_U (u)< h_M(u).$
 Hence there must be a height-gap in $U$  at $u$ or above,  because by maximality, some of the generators of $U$ have the same height in $U$ as in $M$.
\end{proof}

We recall the definition of $\Pext_V^1(X,M)$:  it is a  sub-bifunctor  of $\Ext_V^1(X,M),$ consisting of those non-equivalant extensions of   $M$ by $X$ in which $M$ is a pure submodule (see e.g. \cite[p. 45]{FS2}). In the commutative case, Pext is a $V$-module.

\medskip


\section{Theorems on Torsion and Torsion-free Modules}  \medskip

In this section, we discuss briefly a few fundamental results on torsion and \tf\ objects. An in-depth study that would require more research and interesting applications is planned in the future.

We start with the following simple observation.

 \begin{theorem} \label{pure} A pure and tight \fg\ submodule in an object is a direct summand.
\end{theorem}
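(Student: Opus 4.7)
Since $N$ is finitely generated and tight in the object $M$, Proposition~\ref{fp-sub} gives that $N$ is finitely presented, and by Warfield's structure theorem $N$ decomposes as $N = \bigoplus_{i=1}^n V/Vr_i = T \oplus F$, where $T$ collects the torsion summands (those with $r_i \neq 0$) and $F \cong V^k$ is the free part. Each cyclic summand of $N$ is itself pure in $M$: if $N = N_1 \oplus N_2$ is any direct-sum decomposition and $n_1 \in N_1$ satisfies $n_1 = rm$ in $M$, then purity of $N$ yields some $n' \in N$ with $n_1 = rn'$, and projecting $n'$ onto $N_1$ via the direct sum gives $n_1 \in rN_1$.

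Next I would split off the torsion cyclic summands one at a time. Each cyclically presented torsion module $V/Vr_i$ is pure-injective over $V$, as recalled in the introduction, so being pure in $M$ it is automatically a direct summand. Iterating this peeling produces a decomposition $M = T_1 \oplus \cdots \oplus T_s \oplus M_0$ in which $T_1,\dots,T_s \subseteq N$ exhaust the torsion part of $N$, and the residual $F_0 := N \cap M_0 \cong V^k$ is pure in $M_0$ and tight in $M_0$ (tightness because $M_0/F_0 \cong M/N$ has p.d.$\le 1$, and purity inherits from $N$ in $M$ via the decomposition argument used above).

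It remains to show that $F_0 \cong V^k$ is a direct summand of $M_0$, for then $M = T_1 \oplus \cdots \oplus T_s \oplus F_0 \oplus M'' = N \oplus M''$. My plan is to take a free resolution $0 \to H \to \mathcal F \to M_0 \to 0$ with both $\mathcal F$ and $H$ free (available since p.d.$M_0 \le 1$) and lift the inclusion $F_0 \hookrightarrow M_0$ by projectivity to a copy $\widetilde F \cong V^k$ inside $\mathcal F$. Purity of $F_0$ in $M_0$ transfers to purity of $\widetilde F$ in $\mathcal F$: if $\sum v_i \widetilde x_i = rw$ in $\mathcal F$ for lifts $\widetilde x_i$ of the free generators $x_i$ of $F_0$, then applying the surjection $\mathcal F \to M_0$ gives $\sum v_i x_i \in F_0 \cap rM_0 = rF_0$, and the direct-sum structure of $F_0$ forces each $v_i \in rV$. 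The classical fact that a finite-rank pure submodule of a free $V$-module is a direct summand (see \cite[Chap.~XIV, Theorem~6.1]{FS1}) then yields $\mathcal F = \widetilde F \oplus \mathcal F''$. The main obstacle will be to refine this decomposition so that $H \subseteq \mathcal F''$, which descends to $M_0 = F_0 \oplus (\mathcal F''/H)$; this is precisely the step where I expect the tightness of $F_0$ in $M_0$ to enter decisively, by forcing the vanishing of the Ext-obstruction that classifies the extension $0 \to F_0 \to M_0 \to M_0/F_0 \to 0$.
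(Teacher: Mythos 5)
Your reduction of the torsion part is correct and is essentially a spelled-out version of what the paper does: the paper notes that tightness plus finite generation forces $N$ to be finitely presented, hence a finite direct sum of cyclically presented modules, and then invokes Corollary~\ref{fppi} (pure-injectivity) to split $N$ off in one line. Your peeling argument for the summands $V/Vr_i$ with $r_i\neq 0$, including the observation that direct summands of a pure submodule are pure, is a sound expansion of that step. The genuine gap is the free part $F_0\cong V^k$, and you acknowledge it yourself: the step ``refine the decomposition so that $H\subseteq \mathcal F''$'' is announced as the ``main obstacle,'' with only the expectation that tightness ``will enter decisively.'' That expectation cannot be realized by the mechanism you name. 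Tightness gives only ${\rm p.d.}(M_0/F_0)\le 1$, and that does \emph{not} make $\Ext_V^1(M_0/F_0,F_0)$ vanish: already $\Ext_V^1(V/Vr,V)\cong V/Vr\neq 0$ for a non-unit $r$, so whenever $M_0/F_0$ has torsion the full Ext-obstruction group is nonzero and there is no hope of killing it wholesale. What purity actually gives is that the class of $0\to F_0\to M_0\to M_0/F_0\to 0$ lies in the subgroup $\Pext_V^1(M_0/F_0,F_0)$, and showing that this particular class is zero is precisely the splitting you are trying to prove; equivalently, in your free-resolution picture, the existence of a complement $\mathcal F''$ of $\widetilde F$ containing $H$ is tautologically the same statement. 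So the decisive step of your plan is circular, not merely unfinished.

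What is missing, then, is an actual argument that a finite-rank free submodule that is pure and tight in an object is a summand --- i.e.\ that $V^k$ is pure-injective relative to quotients of projective dimension $\le 1$. Neither the classical fact you quote (a finite-rank pure submodule of a \emph{free} module is a summand, \cite[Chap.~XIV, Theorem~6.1]{FS1}) nor Theorem~\ref{subfree} supplies this for a general object $M_0$, since $V$ itself is not pure-injective over a non-maximal valuation domain and your lifting only places $\widetilde F$ as a summand of $\mathcal F$, not of $M_0$. The paper's route avoids the case division entirely by asserting pure-injectivity of the finitely presented module $N$ at the outset and concluding from pure $+$ pure-injective $+$ tight; to complete your version you would need to either prove $\Pext_V^1(C,V)=0$ for all objects $C$, or find a different argument for the free constituent.
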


\begin{proof}  Suppose a \fg\ module $N$ is pure and tight in a module $M$ of p.d.$\le 1$. By the tightness of $N$, $M/N$ has p.d.$\le 1$, and by Corollary \ref{fppi} $N$ is pure-injective.  All this combined implies that $N$ is a summand of $M$.
\end{proof}

 We continue with
typical examples of direct sums of \cp\ modules: the {\it pure-projective} objects. These are defined as objects $P$ that satisfy $\Pext_V^1(P,M)=0$ for all objects $M  \in \CC_V^*$.

\begin{theorem} \label{pp} A $V$-module is pure-projective if and only if it is a direct sum of \cp\ modules.
\end{theorem}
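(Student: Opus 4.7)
The plan is to prove both implications separately.

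For the easy direction $(\Leftarrow)$, I would first show that a single cyclically presented object $C = V/Vs$ is pure-projective. Let $0 \to M \to N \to C \to 0$ be a pure-exact sequence with $M \in \CC_V^*$ (and hence $N \in \CC_V^*$, since Kaplansky's lemma forces $\pd N \le 1$). Pick any preimage $n \in N$ of $1+Vs$; then $sn \in M$, and purity of $M$ in $N$ gives $sn \in M \cap sN = sM$, so $sn = sm$ for some $m \in M$. The element $n-m$ is again a preimage of $1+Vs$ but is annihilated by $s$, so $1+Vs \mapsto n-m$ defines a splitting $C \to N$. The passage from single cyclically presented modules to arbitrary direct sums of them is achieved by the natural isomorphism $\Pext_V^1(\bigoplus_i P_i, M) \cong \prod_i \Pext_V^1(P_i, M)$, which holds because $\Pext$ is a subfunctor of $\Ext$ and both turn direct sums in the first argument into products.

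For the harder direction $(\Rightarrow)$, I would construct a pure surjection onto $P$ from a direct sum of cyclically presented modules. For every $p \in P$, Lemma \ref{ann} gives $\ann_V(p) = V s_p$, so the assignment $1 + Vs_p \mapsto p$ yields a surjection $C_p := V/Vs_p \to Vp$. Assembling these over all $p$ produces a surjection $\pi: F := \bigoplus_{p \in P} C_p \to P$. Every finitely generated submodule of $P$ is finitely presented (Proposition \ref{fp-sub}(i)) and lifts through a finite partial sum of $F$ as a summand, which is the standard sufficient condition for $\ker \pi$ to be pure in $F$. Moreover $F \in \CC_V^*$, and then the exact sequence together with $\pd P \le 1$ forces $\pd \ker\pi \le 1$, so $\ker\pi \in \CC_V^*$ as well. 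Applying pure-projectivity of $P$ with $M = \ker\pi$ splits the sequence, and $P$ becomes a direct summand of $F$.

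The main obstacle is the final step: deducing that a direct summand of such an $F$ is itself a direct sum of cyclically presented modules. For this I would invoke that each cyclically presented $V$-module has \emph{local} endomorphism ring (as recorded among the basic facts in Section 3). Hence $F$ is a direct sum of (countably generated) modules with local endomorphism rings, and the Crawley--J\'onsson--Warfield theorem then guarantees that any direct summand of $F$ is again a direct sum of modules each isomorphic to some $C_p$. Consequently $P$ is itself a direct sum of cyclically presented modules, completing the proof.
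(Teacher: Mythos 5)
Your proof is correct, but the paper does not actually prove this theorem: it simply cites \cite[Chap.~VI, Theorem 12.2]{FS2}, the known characterization of pure-projective modules over a valuation domain. What you have written is, in effect, the standard proof of that cited result, adapted to the relative setting of $\CC_V^*$: the splitting computation for a single \cp\ module is the usual RD-projectivity argument (purity $=$ relative divisibility over $V$), the reduction of arbitrary direct sums to single summands via $\Pext^1_V(\bigoplus_i P_i,M)\cong\prod_i\Pext^1_V(P_i,M)$ is standard, and the converse is Warfield's argument (a pure epimorphism onto $P$ from a direct sum of \cp\ modules, split by hypothesis) finished off by Crawley--J\'onsson--Warfield using the locality of endomorphism rings of \cp\ modules recorded in Section 3. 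One genuinely useful point your argument makes explicit, which the bare citation glosses over, is that the paper's $\Pext$-vanishing is only assumed against \emph{objects} $M\in\CC_V^*$; your observation that $\ker\pi$ has p.d.$\le 1$ by Kaplansky's lemma (using Lemma \ref{ann} to get the principal annihilators $Vs_p$ in the first place) is exactly what makes this weaker hypothesis suffice. One small imprecision: the criterion for purity of $\ker\pi$ in $F$ is that every homomorphism from a finitely presented module into $P$ (not merely the inclusion of a finitely generated submodule) lifts to $F$; this does hold here, since any map $V/Vt\to P$ with image $Vp$ forces $s_p\mid t$ and hence factors through the summand $V/Vs_p$ of $F$ --- or, more directly, one checks $\ker\pi\cap rF=r\ker\pi$ by hand. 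With that wording tightened, the argument is complete.
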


\begin{proof} This is a special case of a well-known theorem. E.g. it follows from \cite[Chap. VI, Theorem 12.2]{FS2}.
\end{proof}

Concerning direct sums of uniserials, a most important result is the following theorem (this is not related to tightness).

\begin{theorem} \label{sumu} {\rm (i)} The uniserial summands in a direct sum of uniserial modules are unique up to isomorphism.

 {\rm (ii)}  Summands of a direct sum of uniserial modules are themselves direct sums of uniserials.
\end{theorem}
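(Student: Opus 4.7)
The plan for both parts rests on the fact, recalled above (Shores--Lewis \cite{SL}), that every uniserial $V$-module has a local endomorphism ring, making uniserials indecomposable objects to which Krull--Schmidt--Azumaya-type arguments apply.

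For part (i), the plan is to invoke Azumaya's classical theorem: whenever a module admits two direct sum decompositions into indecomposable summands with local endomorphism rings, there is an isomorphism-preserving bijection between the two index sets. Applied to two uniserial decompositions of one and the same module, this yields at once the uniqueness of the uniserial summands up to isomorphism and multiplicity.

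For part (ii), write $M = \bigoplus_{i \in I} U_i$ with each $U_i$ uniserial, and suppose $M = A \oplus B$. I would first treat the case when $I$ is finite, using the exchange property of Theorem~\ref{end-u}: this produces a decomposition $M = A \oplus U_1' \oplus \cdots \oplus U_n'$ with $U_k' \subseteq U_k$, whence $A \cong \bigoplus_k U_k/U_k'$ is a direct sum of uniserials, since quotients of uniserials are uniserial. To pass to arbitrary $I$, the plan is a Kaplansky-style transfinite induction. Starting from any finite subset $F \subseteq I$, by iterated application of the finite exchange property I would construct a countable superset $S \supseteq F$ such that the support in $I$ of both the $A$-- and $B$--components of every element of $\bigoplus_{i \in S} U_i$ again lies in $S$; consequently $N := \bigoplus_{i \in S} U_i$ is a direct summand of $M$ compatible with the decomposition, i.e.\ $N = (N \cap A) \oplus (N \cap B)$, and the finite case applied to $N$ decomposes each of these two factors as a countable direct sum of uniserials. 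Iterating this closure operation and taking unions at limit stages yields a continuous well-ordered chain of summands $A_\sigma$ of $A$ with $\bigcup_\sigma A_\sigma = A$ and each successive quotient $A_{\sigma+1}/A_\sigma$ a countable direct sum of uniserials. Since each $A_\sigma$ splits off from $A_{\sigma+1}$, splicing the decompositions gives $A$ as a direct sum of uniserials.

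The chief obstacle is the countable-closure construction in the infinite case: one must enlarge $F$ to $S$ by only countably many indices while ensuring that the $A$-- and $B$--projections of every element added to $N$ stay inside $N$. This is achieved by a standard back-and-forth alternation that alternately absorbs into $S$ the supports of the $A$-- and $B$--components of the currently accumulated elements; the construction runs in the tradition of Kaplansky's proof that projective modules over arbitrary rings are direct sums of countably generated ones, and is made tractable here by the finite exchange property of Theorem~\ref{end-u} together with part (i). Both parts of the theorem may alternatively be read off from the general theory of direct sum decompositions of modules with local endomorphism rings developed in \cite{FS2}.
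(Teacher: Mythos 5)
Your proposal is correct and rests on exactly the same foundation as the paper's proof, which simply observes that both claims are ``well-known immediate consequences of the fact that the endomorphism rings of uniserial modules are local'' (citing Theorem~\ref{end-u}); you have merely unpacked those well-known consequences, namely Azumaya's uniqueness theorem for (i) and the exchange-property/Crawley--J\'onsson--Warfield closure argument for (ii). The only point worth making explicit in your countable-closure step is that the uniserial objects of $\CC_V^*$ are at most countably generated (as established in Section~3), which is what guarantees that absorbing the supports of the $A$-- and $B$--components adds only countably many indices at each stage.
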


\begin{proof} These are well-known immediate consequences of the fact that the endomorphism rings of uniserial modules are local (Theorem \ref{end-u}).
\end{proof}

 Let  $r \in V^\times$ be  a non-unit.  By a {\it $V/Vr$-homogeneous} module we mean   a $V$-module $H$ such that each element  is contained in a submodule of $H$ that is $\cong V/Vr$. Then $H$ satisfies $rH=0$, and  any cyclic submodule of $H$  that is $\cong V/Vr$ must be pure in $H$.  Moreover, by Theorem \ref{pure} it is then a summand.

\begin{proposition} \label{homog} Suppose $M$ is a $V$-module of {\rm p.d.}$1$. \smallskip

{\rm (i)} If $rM=0$, then  a $V/Vr$-homogeneous tight submodule  is a summand of $M$.

{\rm (ii)} If $M$ is $V/Vr$-homogeneous, then it is the direct sum of \cp\ submodules, all isomorphic to $V/Vr$.

{\rm (iii)} If $D$ is a divisible object, then for every  $r \in V^\times$, $D[r]$ is $V/Vr$-homogeneous, so a direct sum of \cp\ submodules isomorphic to $V/Vr$.
\end{proposition}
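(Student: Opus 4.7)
The technical core underlying all three parts is a purity lemma: if $W$ is a $V$-module with $rW=0$ and $b\in W$ has $\ann(b)=Vr$, then $\langle b\rangle\cong V/Vr$ is pure in $W$. I will verify this directly: given $tb=sx$ with $x\in W$ and $s$ a non-zero non-unit, if $r\mid s$ then $sW=0$ so $tb=0$; otherwise $s\mid r$, and I claim $s\mid t$. Indeed, were $t\mid s$ properly with $s=tu$ and $u$ a non-unit, comparing $\ann(tb)=V(r/t)$ with $\ann(sx)=(\ann(x):s)$ (where $\ann(x)=Vq$ with $q\mid r$) yields a contradiction in both subcases: $q\mid s$ forces $tb=0$ and $t\sim r$, clashing with $s\mid r$ non-unit, while $s\mid q$ forces $q=ru$, so $q\mid r$ forces $u$ a unit.

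For (ii), pick any nonzero $a\in M$; homogeneity puts $a\in\langle b_0\rangle\cong V/Vr$ with $\ann(b_0)=Vr$. Combining the purity lemma with Proposition \ref{fp-sub}(i), the pure-injectivity of $V/Vr$ (Corollary \ref{fppi}), and Theorem \ref{pure} peels off $M=\langle b_0\rangle\oplus M_1$. The complement inherits $V/Vr$-homogeneity: for $x\in M_1$ with $x\in\langle c\rangle\subseteq M$ and $\ann(c)=Vr$, decompose $c=c_0+c_1\in\langle b_0\rangle\oplus M_1$ and use that $\ann(c_0)\cap\ann(c_1)=Vr$ in a valuation domain forces one of the two annihilators to equal $Vr$. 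If $\ann(c_1)=Vr$, then $x=tc=tc_1\in\langle c_1\rangle$; otherwise $c_0=ub_0$ for a unit $u$, and $x\in M_1$ forces $tu\in Vr$, whence $x=tc=0$. Iterating via Zorn on pairs $(K,P)$ with $K$ a direct sum of $V/Vr$-cyclics and $M=K\oplus P$ yields $M=\bigoplus V/Vr$ at the maximum.

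For (iii), the sequence $0\to D[r]\to D\xrightarrow{r}D\to 0$ is exact by divisibility, whence p.d.$\,D[r]\le 1$ and tightness of $D[r]$ in $D$ follow from standard p.d. estimates. For homogeneity, if $a\in D[r]$ with $\ann(a)=Vs$ ($s\mid r$, $s\ne 0$), divisibility provides $b\in D$ with $a=(r/s)b$; an annihilator calculation pins $\ann(b)=Vr$, so $b\in D[r]$ and $a\in\langle b\rangle\cong V/Vr$. Part (ii) then finishes.

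For (i), the purity lemma combined with $V/Vr$-homogeneity of $H$ upgrades to show $H$ is pure in $M$: any $h\in H\cap sM$ lies in some $\langle b\rangle\cong V/Vr\subseteq H$ with $\ann(b)=Vr$, and the lemma (applied inside $M$) places $h\in s\langle b\rangle\subseteq sH$. Now Zorn on pairs $(K,P)$ with $K\le H$ a direct sum of $V/Vr$-cyclics and $M=K\oplus P$. The identity $H=K\oplus(H\cap P)$ (because $\pi_K|_H$ surjects $H$ onto $K$ with kernel $H\cap P$) shows $H\cap P$ is a summand of the $V/Vr$-free module $H$ (freeness from (ii)), hence itself $V/Vr$-free by Kaplansky's theorem for projective modules over the local ring $V/Vr$. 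If $H\cap P\ne 0$, pick a basis element $b\in H\cap P$: $\langle b\rangle\cong V/Vr$ is pure in $P$ (purity lemma, since $rP=0$), tight, and pure-injective, so Theorem \ref{pure} splits it off $P$, enlarging $K$ to $K\oplus\langle b\rangle\le H$ and contradicting maximality. The main obstacle is the bookkeeping of the Zorn chains --- verifying that the chain upper bound is obtained by taking $\bigcup K_\alpha$ paired with $\bigcap P_\alpha$ and that this still decomposes $M$ --- together with, in (i), the freeness of $H\cap P$, which crucially uses Kaplansky's theorem over the local ring $V/Vr$.
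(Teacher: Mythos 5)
The paper disposes of all three parts by citation to \cite{FS2} (Chap.~XII, Theorems 2.2--2.3 and Chap.~XIV, Corollary 2.4), so your self-contained attempt is necessarily a different route. Your purity lemma, the computation showing that the complement $M_1$ inherits $V/Vr$-homogeneity, and the reduction of (iii) to (ii) via $0\to D[r]\to D\xrightarrow{\;r\;}D\to 0$ together with the annihilator calculation are all correct. But the argument collapses exactly at the point you yourself flag as ``the main obstacle'': Zorn's lemma applied to pairs $(K,P)$ with $M=K\oplus P$ does not go through, because a chain of such pairs need not have an upper bound in the poset. It is false in general that $M=\bigl(\bigcup_\alpha K_\alpha\bigr)\oplus\bigl(\bigcap_\alpha P_\alpha\bigr)$: already for a vector space $M=\bigoplus_{n\ge 0}Fe_n$ with $K_n=\langle e_0+e_1,\dots,e_{n-1}+e_n\rangle$ and $P_n=\langle e_n,e_{n+1},\dots\rangle$ one has $M=K_n\oplus P_n$ for every $n$, yet $\bigcap_n P_n=0$ while $e_0\notin\bigcup_n K_n$. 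Since $V/Vr$-homogeneous modules with $rM=0$ are precisely of this bounded homogeneous type, your chains can fail in exactly this way, and the transfinite exhaustion --- which is the real content of the proposition, the analogue of Pr\"ufer's theorem on bounded abelian groups --- is missing. The known repairs (a maximal pure-independent family, Kulikov's criterion, or a continuous well-ordered chain of pure tight submodules extracted from a tight system, with limit stages controlled by the $\Sigma$-pure-injectivity of $\bigoplus V/Vr$) all require ideas beyond what you wrote.

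A secondary problem is circularity within this paper's logical architecture: you invoke Theorem \ref{pure} and Corollary \ref{fppi}, but Theorem \ref{pure} is proved here via Corollary \ref{fppi}, which rests on Example \ref{ex-pi}, which is derived from Proposition \ref{homog}(iii) --- the very statement you are proving. This is repairable, since the pure-injectivity of a cyclically presented module over a valuation domain is Warfield's theorem and can be imported independently of the proposition; but as written, your proofs of (i) and (ii) rest on a consequence of (iii).
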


 \begin{proof} For (i)-(ii) we refer to \cite[Chap. XII, Theorems 2.2 and 2.3]{FS2}, and for (iii) to \cite[Chap. XIV, Corollary 2.4]{FS2}.
 \end{proof}

 We note that the number of summands $\cong V/Vr$ in (iii) is the same for every $r$ provided that $D[r] \ne 0$: it is the Goldie-dimension of $D$.

  \smallskip

   An important theorem in abelian group theory, due to L. Ya. Kulikov, states that a subgroup of a direct sum of cyclic groups is likewise a direct sum of cyclic groups (see, e.g., \cite{F1}). An analogue in $\CC_V^*$ would state that a tight submodule of
 a direct sum of cyclically presented modules is also such a direct sum.  This is indeed true for \tf\ modules: a tight submodule of a free module in $\CC_V^*$ is again free.  It was conjectured that this holds in $\CC_V^*$   also in the torsion case.  (For torsion abelian groups, see  e.g. \cite[Chap. 3, Theorem 5.7]{F1}.) However, we claim that the module $X$ in Example \ref{limit}  refutes this conjecture. In order to prove this, consider the module $Y$ in the following example.

  \begin{example} \label{ex1}  The \cg\ torsion object $Y$  is defined just as the module $X$ in Example \ref{limit}: it is generated by the same set $\{ x_i \ |\ i < \w\}$ with the same defining relations, but there is a single modification:  we replace $t_0 \in V^\times$ by $s \in V^\times$ that is picked such that $J < Vs^{-1}$. In this case, $Vx_0$ is a pure and tight submodule in $Y$ (a summand), and the elements $x_i-(t_i^{-1}s)x_0$ for all $i >0$ generate cyclic direct summands of $Y$ such that $Y$ is the direct sum of $Vx_0$ and these cyclic submodules.    (Another, but less explicit argument to obtain the structure of $Y$ is as follows.  After observing that the cyclic submodule $Vx_0$ is pure in $Y$, it only remains to point out that moreover, it is a summand of $Y$, since $Y/Vx_0$ is pure-projective as the direct sum of \cp\ modules $\cong Vt_i \ (i >0)$.)
   \qed \end{example}

 To argue that the object $X$ of Example \ref{limit} cannot be a direct sum of  \cp\ objects,  appeal to Theorem \ref{height}.  The element $t_0x_0 \in X$ is of countable limit height, and as such it  cannot belong to a direct sum of the stated  kind: it would be contained already in  a \fg\ summand with the same limit height. However, this is impossible  as is demonstrated by the cited theorem. Thus the object $X$ that is (isomorphic to) a tight submodule (observe  that $Y/X \cong Vs/Vt_0$ is \cp)  in a direct sum $Y$ of \cp\ modules fails to be a direct sum of such modules.

 Hence it is obvious  that this theorem of Kulikov  cannot have the  suspected analogue in $\CC_V^*$  without additional hypotheses. Looking for simple conditions that would  lead  us to a Kulikov-type theorem  for subobjects in direct sums of  \cp\ objects, we selected (b), in addition to the obvious (a), that seems  natural  to assume:  \smallskip

  {\rm  (a)} {\it The non-zero elements have cyclic heights. }

{\rm  (b)} {\it The  uniserial submodules   admit but a finite number of  height-gaps.} \smallskip

\nin Under the hypothesis of (a)-(b), we will prove a desired analogue for the \cg\ torsion objects (see Theorem \ref{kulikov} below). But first we deal with preliminary lemmas.

We need a definition. Similarly to \cite{FS0}, we will call a $V$-module $M$  {\it cyclically separable} if every finite set of its elements can be embedded in a \fg\ summand of $M$, i.e.  in a summand that is the direct sum of a finite number of \cp\ modules.   Observe that in order  to verify the cyclic separability of a torsion object, it suffices to check the defining property only for one element subsets, as every \fg\ object is a finite direct sum of \cp\ objects. Hence it is evident  that summands of modules inherit cyclical separability. We now prove a crucial lemma.

\begin{lemma} \label{sep} Let $M$ denote a torsion object in $\CC_V^*$.  If $M$ satisfies conditions {\rm (a)-(b)}, then it is a cyclically separable $V$-module. \smallskip
\end{lemma}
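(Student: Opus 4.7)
My plan is to show that every element $a\in M$ is contained in a \fg\ direct summand of $M$, which by the remark preceding the statement suffices for cyclic separability.  Combining Theorem~\ref{pure} with the fact that \fg\ submodules of objects in $\CC_V^*$ are automatically tight (Proposition~\ref{fp-sub}(i)), the problem reduces to producing, for each $a\in M$, a \fg\ submodule $S\subseteq M$ with $a\in S$ that is \emph{pure} in $M$.

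Hypothesis (a) already restricts the uniserial structure of $M$ sharply.  For a standard uniserial $J/I\subseteq M$ and a non-zero element $q+I$, the $J/I$-height equals $q^{-1}J/V$ and is dominated by the cyclic $h_M(q+I)$; hence $q^{-1}J$—and therefore $J$—must be cyclic, so every uniserial submodule of $M$ is cyclically presented.  Since $h_M(a)$ is cyclic, say $h_M(a)=Vs^{-1}/V$, the defining map $Vs^{-1}\to M$ sends $s^{-1}$ to some $b\in M$ with $sb=a$; the maximality of $H_M(a)=Vs^{-1}$ forces $h_M(b)=0$, for any extension of that map to a strictly larger submodule of $Q$ would witness a non-unit dividing $b$.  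A maximal uniserial through $a$ is then $Vb$, and I take $S_0:=Vb$ as my initial \fp\ subobject containing $a$.

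If $S_0$ is already pure in $M$, we are done.  Otherwise a purity failure produces $y=vb\in S_0$, a non-unit $r\in V^\times$, and $m\in M$ with $rm=y$ but $y\notin rS_0$.  Using (a) once more, lift $m$ to a height-zero element $b_1\in M$ and set $S_1:=S_0+Vb_1$, still \fg.  Iterating this procedure produces an ascending chain $S_0\le S_1\le\cdots$ of \fg\ subobjects of $M$ each containing $a$, where every stage absorbs one more purity witness.

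The heart of the proof is termination of this iteration, and this is where condition (b) enters decisively.  Each purity witness at $S_n$ localises at a height-gap of one of the finitely many cyclic uniserials composing $S_n$, and by (b) each such uniserial admits only finitely many height-gaps in $M$.  A careful bookkeeping argument—matching each newly adjoined $b_{n+1}$ to a previously unresolved gap within the finite family of uniserials activated so far, and verifying that each new generator does not spawn uniserials whose gap sets were not already under control—should force the chain to stabilise in finitely many steps.  The resulting $S$ is then \fg, pure by construction, and tight by Proposition~\ref{fp-sub}(i); Theorem~\ref{pure} then exhibits $S$ as a direct summand of $M$ containing $a$.  The principal obstacle I anticipate is making this termination precise: identifying a monotonically decreasing quantity—most plausibly the total number of unresolved height-gaps among the currently activated uniserials—and ruling out that fresh gaps can appear indefinitely.
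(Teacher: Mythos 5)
Your overall strategy -- reduce to producing a finitely generated \emph{pure} submodule containing $a$, then invoke Proposition~\ref{fp-sub}(i) and Theorem~\ref{pure} to get a summand -- is exactly the paper's, and your analysis of hypothesis (a) (every element lies in a maximal uniserial that is cyclically presented) is also how the paper begins. The proof breaks down, however, at the point you yourself flag: the termination of your ``absorb one purity witness at a time'' iteration is never established, and there is no identified quantity that strictly decreases. Adjoining a new generator $b_{n+1}$ can create fresh purity failures involving $b_{n+1}$ itself, and nothing in your setup prevents the set of ``unresolved gaps'' from growing as fast as you resolve them; condition (b) bounds the gaps of each individual uniserial, not of the ever-growing family you activate. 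As written, the argument is an announced plan rather than a proof.

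The missing idea, which is how the paper closes the loop, is to induct on the number of height-gaps strictly between $a$ and $0$ inside its maximal cyclic uniserial $C=Vc$ -- a number that is finite by (b). The base case is the one you already have: if there is no gap below $a$, then $C$ is pure, hence a summand by Theorem~\ref{pure}. For the inductive step one does \emph{not} enlarge $C$; instead one takes the \emph{lowest} gap, say at $rc$, observes that $rc$ has no gaps beneath it, and applies the base case to $rc$ to split off a cyclically presented summand $M=Vb\oplus M'$ with $rc\in Vb$. Adjusting the generator $c$ by a suitable $b'\in Vb$ gives $Vc+Vb=V(c-b')\oplus Vb$, and the projection of $c-b'$ into $M'$ carries the (projection of the) element $a$ with strictly fewer gaps below it. Iterating this splitting-off step at most ``number of gaps'' many times terminates and produces the desired finitely generated summand. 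It is this peel-off-a-summand-at-the-lowest-gap mechanism, rather than bookkeeping over an accumulating family of generators, that makes the finiteness in (b) actually bite.
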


\begin{proof}
Assume $M$ has properties (a)-(b), and let $0 \ne a \in M$. By (a), $a$ is contained in a \cp\ submodule $C= Vc$ that is maximal uniserial in $M$.
If $C$ contains no height-gap strictly between $a$ and $0$, then $C$ is pure in $M$, and hence a summand of $M$ (Theorem  \ref{pure}). Thus in this case $a$ embeds in a \cp\ summand of $M$, and we are done.
If there are height-gaps in $C$  between $a$ and $0$, then by (b) there is one, say at $rc \ (r \in V)$, such that no height-gap exists strictly between $rc$ and $0$. Then by the previous argument there is a \cp\ summand $B=Vb$ of $M$ that contains $rc$, $M = Vb \oplus M'$.  If $b' \in Vb$ is such that $Vrb'/Vc \cong Vr$, then $V(c)+V(b) = V(c-b') \oplus V( b)$. In this case, the projection of $V(c-b')$ in $M'$ contains the coordinate of $a$ with a smaller number of height-gaps below it. Repeating this process for the coordinates of  $a$ a finite number of times, we get a \fg\ summand of $M$ that contains the selected element $a$.
\end{proof}

 \begin{lemma} \label{Kuli} Let $M$ be a torsion object satisfying conditions {\rm (a)-(b)}. If $M$ is \cg, then it is a direct sum of \cp\ objects.
 \end{lemma}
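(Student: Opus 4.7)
The plan is to mirror the classical proof of Kulikov's theorem for countable sums of cyclic groups by realizing $M$ as an ascending union of finitely generated direct summands, each a summand of the next. I would enumerate a generating set $\{a_n : n < \w\}$ of $M$ and build recursively finitely generated direct summands $F_0 \le F_1 \le \cdots$ of $M$ with $a_n \in F_n$ and $F_n$ a summand of $F_{n+1}$. Setting $C_0 = F_0$ and $C_{n+1}$ a complement of $F_n$ in $F_{n+1}$, the $F_n$ exhaust $M$ and realize it as the internal direct sum $\bigoplus_n C_n$. Each $C_n$ is a finitely generated subobject of an object in $\CC_V^*$, hence finitely presented by Proposition \ref{fp-sub} and therefore a finite direct sum of cyclically presented modules by Warfield's decomposition; concatenating delivers the conclusion.

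The base case $F_0 \ni a_0$ is produced by Lemma \ref{sep} applied to $M$ itself. For the inductive step I write $M = F_n \oplus M_n$, decompose $a_{n+1} = f + m$ with $f \in F_n$ and $m \in M_n$, pick a finitely generated summand $G \le M_n$ containing $m$ by cyclic separability of $M_n$, and set $F_{n+1} = F_n \oplus G$. The only substantive point is therefore that $M_n$ itself satisfies the hypotheses of Lemma \ref{sep}: torsionness and p.d.$\le 1$ pass trivially to summands, so only conditions (a) and (b) need checking.

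Both descend from the observation that heights are preserved under passage to a direct summand. Concretely, if $A$ is a summand of $M$ and $x \in A$, then composing any height map $\f \colon J \to M$ with $\f(1) = x$ with the projection $M \to A$ produces a map $J \to A$ hitting $x$; with the obvious reverse inclusion this yields $h_A(x) = h_M(x)$. Thus (a) for $M$ passes to $A$ verbatim. For (b), suppose $sx = a$ witnesses the absence of a height-gap of $M$ at $a \in A$; the $A$-projection $x_A$ satisfies $sx_A = a$ and $h_M(x) \le h_M(x_A) = h_A(x_A)$, so the standard inequality $sh_A(x_A) \le h_A(a) = h_M(a) = sh_M(x) \le sh_A(x_A)$ forces equality, showing $x_A$ witnesses no height-gap of $A$ at $a$. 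Hence every height-gap of $A$ in a uniserial $U \le A$ is also a height-gap of $M$ in $U \le M$, and (b) for $M$ yields (b) for $A$. This hereditary claim for conditions (a) and (b) is the main (still quite minor) obstacle; once it is in hand, the induction goes through by iterating Lemma \ref{sep} and the countable bookkeeping above finishes the proof.
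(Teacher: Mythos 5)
Your proof is correct and takes essentially the same route as the paper: Lemma \ref{sep} supplies cyclic separability, and the remainder is the standard argument that a countably generated cyclically separable torsion module is an ascending union of finitely generated summands and hence a direct sum of cyclics, which the paper dismisses as a ``simple exercise.'' Your only addition is the explicit verification that conditions (a) and (b) descend to direct summands so that Lemma \ref{sep} can be re-applied to the complements; this substitutes (perfectly acceptably) for the paper's earlier unproved remark that summands inherit cyclic separability.
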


 \begin{proof} By Lemma \ref{sep}, $M$ is cyclically separable. It is a simple exercise to prove that   a \cg\  cyclically separable module is a direct sum of cyclics.
 \end{proof}

The following analogue  of Kulikov's theorem is now easily established.

 \begin{theorem}   \label{kulikov} Assume $M$ is a direct sum of  \cp\ torsion objects, and $N$ is a \cg\ subobject  satisfying condition {\rm (b)} above.  Then $N$ is likewise a direct sum of   \cp\  subobjects.
\end{theorem}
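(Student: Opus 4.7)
The plan is to reduce the theorem to Lemma~\ref{Kuli} by verifying that $N$ itself is a countably generated torsion object in $\CC_V^*$ satisfying both conditions (a) and (b). Three of the required ingredients come almost for free: $N$ is \cg\ by hypothesis, torsion because it sits inside the torsion module $M$, and of p.d.\ $\le 1$ because it is a subobject of $M$; condition~(b) is assumed outright. So the work reduces to verifying condition~(a), that every non-zero element of $N$ has cyclic height in $N$.

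To check (a), I would fix $0 \ne a \in N$. Since $M = \bigoplus_\alpha C_\alpha$ is a direct sum of \cp\ torsion modules and $a$ has finite support, $a$ lies in a \fg\ summand $F = C_{\alpha_1}\oplus\dots\oplus C_{\alpha_n}$ of $M$. The key observation is that $h_F(a) = h_M(a)$: the inclusion $F \hookrightarrow M$ gives one containment of the height-ideals, while composing any extension $\phi_J : J \to M$ with $\phi_J(1) = a$ by the projection $\pi_F : M \to F$ produces $\pi_F\circ \phi_J : J \to F$ still sending $1$ to $a$, yielding the reverse containment. A direct computation with coordinates in $F \cong \bigoplus_{i=1}^n V/Vr_i$ then shows that heights in such a module are cyclic (consistent with Theorem~\ref{height}, which already rules out limit heights in \fg\ modules). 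Since $N \hookrightarrow M$ forces $h_N(a) \le h_M(a) = h_F(a)$, and sub-$V$-modules of $Q$ sandwiched between $V$ and a cyclic $V r^{-1}$ are themselves cyclic over the \vd\ $V$, the height $h_N(a)$ is cyclic as well, confirming~(a).

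With (a) and (b) verified, Lemma~\ref{Kuli} immediately delivers the decomposition of $N$ as a direct sum of \cp\ subobjects. The step that demands the most care is the identification $h_M(a) = h_F(a)$: the direct-summand status of $F$ in $M$ is essential for the projection argument, because an extension $\phi_J : J \to M$ can fail to land in $F$ (torsion in the complementary summand may absorb components killed by $u$), and in that case the height in $M$ could a priori strictly exceed the height in $F$. Once this point is secured, the rest is a routine matter of feeding the verified hypotheses into Lemma~\ref{Kuli}.
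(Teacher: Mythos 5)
Your reduction to Lemma~\ref{Kuli} is the right strategy and agrees with the paper's, and your preliminary observations are correct: $N$ is torsion, countably generated, of p.d.\ $\le 1$, and for the finitely generated summand $F$ of $M$ supporting a given $a$ the projection argument does give $h_M(a)=h_F(a)$, which is cyclic. The gap is in the very last inference of your verification of condition (a): from $h_N(a)\le h_M(a)=h_F(a)$ cyclic you conclude that $h_N(a)$ is cyclic, on the grounds that ``sub-$V$-modules of $Q$ sandwiched between $V$ and a cyclic $Vr^{-1}$ are themselves cyclic.'' That claim is false unless $V$ is a discrete valuation domain: a submodule of $Q$ between $V$ and $Vr^{-1}$ corresponds, after multiplication by $r$, to an ideal of $V$ containing $Vr$, and over a non-Noetherian valuation domain such ideals can be countably generated and non-principal. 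So the height of $a$ computed inside $N$ may be strictly smaller than, and of an entirely different type from, its height in $M$.

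The paper's own Examples~\ref{limit} and~\ref{ex1} refute your step directly: there $X$ is a countably generated tight submodule of the direct sum $Y$ of cyclically presented modules, the element $t_0x_0$ has cyclic height in $Y$, yet its height in $X$ is the countable limit height $(J/r^{-1}V)^-$, so $X$ violates condition (a). Note also that your verification of (a) never invokes hypothesis (b); if it were valid, the theorem would hold without (b), which is exactly what the module $X$ is introduced to disprove. The paper instead derives (a) from Theorem~\ref{max-us}(iii): a countably generated maximal uniserial submodule of the \emph{tight} submodule $N$ remains maximal in $M$, and maximal uniserials in a direct sum of cyclically presented objects are cyclically presented. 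Thus the tightness of $N$ in $M$ (not merely the inclusion $N\le M$), together with (b) to exclude limit heights, is what must carry the argument; to repair your proof you would need to replace the final sandwiching step by an argument of that kind.
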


 \begin{proof}
Owing to  Theorem \ref{gen} and Lemma \ref{sep} it suffices to show that $N$ satisfies condition (a). But this is immediate by virtue of Theorem \ref{max-us} (iii).  \end{proof}

 We are asking  the obvious question: do the preceding lemma and theorem hold for larger cardinalities? The answer is:  for Lemma \ref{Kuli} counterexamples are torsion-complete abelian $p$-groups with countable unbounded  basic subgroups; cf. \cite[Chap. 10, sect. 3]{F1}. For Theorem \ref{kulikov} we do not know the answer.
 
We record the following two parallel questions.

\begin{problem} Are pure subobjects in direct sums of torsion uniserial $($resp. \cg$)$ objects also direct sums  of the same kind?
\qed \end{problem}

\medskip

Next we want  to get an idea of the \tf\ modules in $\CC_V^*$. It is a pleasant surprise that all countably generated \tf\ modules in $\CC_V$ are objects in $\CC_V^*$.  This is evident from the following theorem.

\begin{theorem} \label{tf1}  {\rm (i)}  A \tf\ $V$-module $A$ has {\rm p.d.}$\le 1$  if and only if every rank one pure submodule is at most \cg.

  {\rm (ii)} A torsion-free $V$-module $A$ is of {\rm p.d.}$\le 1$ if and only if it admits a well-ordered ascending chain of tight pure submodules $A_\a$ such that  for each  $\a$, $A_{\a+1}/A_\a$ is of rank  one and of {\rm p.d.}$1\ ($thus cyclic or countably generated \tf$)$.
\end{theorem}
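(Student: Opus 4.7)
My plan is to prove the equivalences in the order (ii)$\,(\Leftarrow)$, (i)$\,(\Rightarrow)$, (ii)$\,(\Rightarrow)$, (i)$\,(\Leftarrow)$, so that the two chain characterizations can feed into each other. The implication (ii)$\,(\Leftarrow)$ is an immediate application of Lemma \ref{chain}: since each $A_{\alpha+1}/A_\alpha$ has p.d.$\,1$, each $A_\alpha$ is tight in $A_{\alpha+1}$, and the lemma delivers p.d.$\,A\le 1$ together with tightness of every $A_\alpha$ in $A$.

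For (i)$\,(\Rightarrow)$ let $N$ be a rank one pure submodule of $A$ and fix a nonzero $a\in N$. Since the only pure submodules of a rank one torsion-free module are $0$ and the whole, $N$ coincides with the pure closure of $Va$ in $A$; by the very definition of the height-ideal, this pure closure is $H_A(a)\cdot a$, which as a $V$-module is isomorphic to $H_A(a)\subseteq Q$. Theorem \ref{height} asserts that heights of nonzero elements in objects of $\CC_V^*$ can only be cyclic, countably generated non-limit, or of standard limit type, all of which force $H_A(a)$ to be at most countably generated. Hence $N\cong H_A(a)$ is at most countably generated.

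For (ii)$\,(\Rightarrow)$, $A$ admits a tight system $\TT$, which in the torsion-free case over a valuation domain can be taken to consist of pure submodules (see \cite[Chap.~VI]{FS2}). Using the $G(\al_0)$-property of such a pure tight system I would build a continuous well-ordered chain of pure tight $B_\sigma\in\TT$ exhausting $A$ with $\gen(B_{\sigma+1}/B_\sigma)\le\al_0$. Each $B_{\sigma+1}/B_\sigma$ is then a countably generated torsion-free module of p.d.$\,\le 1$. Inside each such successor quotient, well-order a generating set and take successive pure closures; by (i)$\,(\Rightarrow)$ applied to $B_{\sigma+1}/B_\sigma$, each of these rank one pure submodules is countably generated, hence of p.d.$\,\le 1$ by Osofsky. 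Lifting these refinements back to $A$ produces the required chain with rank one, p.d.$\,1$ successor quotients.

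For (i)$\,(\Leftarrow)$ the construction proceeds without the benefit of a tight system: well-order the elements $\{a_\sigma\}$ of $A$, set $A_0=0$, take $A_{\sigma+1}$ to be the pure closure in $A$ of $A_\sigma+Va_\sigma$, and unions at limits. Each $A_\sigma$ is pure in $A$, and $A_{\sigma+1}/A_\sigma$ is a rank one pure submodule of $A/A_\sigma$. The pure closure $N$ of $Va_\sigma$ in $A$ is itself a rank one pure submodule of $A$, hence countably generated by hypothesis; a short verification using purity of $A_\sigma$ in $A$ shows that the image of $N$ under the canonical map $A\to A/A_\sigma$ coincides with $A_{\sigma+1}/A_\sigma$, which is therefore at most countably generated and of p.d.$\,\le 1$ by Osofsky. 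Hence $A_\sigma$ is tight in $A_{\sigma+1}$, and Lemma \ref{chain} yields p.d.$\,A\le 1$. The main obstacle throughout is the interplay of purity and tightness --- pure closures do not in general preserve tightness --- and I overcome it either by invoking the existence of a pure tight system (in (ii)$\,(\Rightarrow)$) or by designing the chain so that tightness only needs to hold in immediate successors (in (i)$\,(\Leftarrow)$), with torsion-freeness of $A$ and the tight control over height-ideals afforded by Theorem \ref{height} as the indispensable ingredients.
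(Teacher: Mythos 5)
The paper itself gives no argument here --- it simply cites \cite[Corollary 4.5]{F2} for (i) and \cite[Chap.~VI, Lemma 6.6]{FS2} for (ii) --- so any genuine proof is new relative to the paper. Your parts (ii)$(\Leftarrow)$ and (i)$(\Leftarrow)$ are correct: the identity $A_{\sigma+1}=A_\sigma+N$, with $N$ the pure closure of $Va_\sigma$ in $A$, does check out using the purity of $A_\sigma$ and the torsion-freeness of $A/A_\sigma$, and Auslander's lemma then yields both p.d.$A\le 1$ and the tightness of every $A_\sigma$.

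There are, however, two gaps. First, in (i)$(\Rightarrow)$ you assert that all three height types of Theorem \ref{height} force $H_A(a)$ to be at most countably generated. This is false for type (iii): ``arbitrary limit height of standard type'' permits an uncountably generated numerator (the proof of that theorem only excludes uncountably generated \emph{non-limit} heights). The step is repairable: in a torsion-free module a map $\f_J:J\to A$ with $\f_J(1)=a$ is unique, so the maps are compatible and glue over the directed union, whence $H_A(a)$ itself carries such a map and the height is never a limit; then only types (i)--(ii) can occur and countable generation follows. You must make this observation explicit --- your identification of the pure closure of $Va$ with $H_A(a)\cdot a$ already presupposes it. Second, your (ii)$(\Rightarrow)$ rests on the unverified claim that the tight system may be chosen to consist of pure submodules, and contains a circular step: to see that the successive pure-closure quotients $C_{n+1}/C_n$ inside $B_{\sigma+1}/B_\sigma$ are countably generated you apply (i)$(\Rightarrow)$ to $(B_{\sigma+1}/B_\sigma)/C_n$, which requires knowing that $C_n$ is already tight --- precisely what the refinement is meant to establish. (A direct fix: a rank one pure submodule of a \cg\ \tf\ module is \cg, being a countable union of rank one pure submodules of \fg\ free modules, each of which is a cyclic summand.) The cleanest repair is to drop the tight-system argument altogether: once (i)$(\Rightarrow)$ is known, the chain you already build in (i)$(\Leftarrow)$ --- successive pure closures of $A_\sigma+Va_\sigma$ --- has rank one, \cg, hence p.d.$\,\le 1$ successor quotients, and Auslander's lemma makes every $A_\sigma$ pure and tight in $A$; this is exactly the chain required by (ii).
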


  \begin{proof}  It suffices to refer to \cite [Corollary 4.5]{F2} and to \cite[Chap. VI, Lemma 6.6] {FS2}, respectively.
\end{proof}

We continue with a theorem that resembles Pontryagin's theorem on countable free abelian groups.  A similar result for the projective dimension one case  is also  included.

\begin{theorem} \label{Pontr} A \tf\ module of countable rank in $ \CC_V^*$ is free $($is an object in $\CC_V^*)$ if and only if its finite rank pure submodules are free $($have {\rm p.d.}$\le 1)$.
\end{theorem}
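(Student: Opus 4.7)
The plan is to address the two biconditionals---the principal one about freeness and the parenthetical one about p.d.$\le 1$---in sequence, leaning on Theorem \ref{tf1}(i) and on the fact (cited just before Theorem \ref{subfree}) that finite rank pure submodules of a free $V$-module are direct summands.

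For the parenthetical equivalence, I would first observe that purity is transitive, so every rank one pure submodule of a \fr\ pure submodule $B \le A$ is itself a rank one pure submodule of $A$. Applying Theorem \ref{tf1}(i) to both $A$ and each such $B$ shows that the condition ``every rank one pure submodule of $A$ is at most \cg'' is equivalent to ``every \fr\ pure submodule of $A$ has p.d.$\,\le 1$,'' which is precisely what the parenthetical claim asserts.

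For the freeness statement, the forward direction is immediate: a \fr\ pure submodule in a free $V$-module is a direct summand, hence projective, hence free by Kaplansky. The converse is the substantive direction. Assume every \fr\ pure submodule of $A$ is free. Pick a maximal linearly independent subset $\{a_n \mid n < \w\}$ of $A$, possible because $\rk A \le \al_0$. For each $n$, let $A_n$ be the purification in $A$ of $Va_0 + \cdots + Va_{n-1}$; then $A_n$ is pure in $A$ of rank exactly $n$, $A = \bigcup_{n < \w} A_n$, and by hypothesis each $A_n$ is free. Having rank $n$ forces $A_n \cong V^n$, which is \fg. Consequently $A_{n+1}/A_n$ is \tf, \fg, of rank one, hence cyclic and $\cong V$. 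The sequence $0 \to A_n \to A_{n+1} \to A_{n+1}/A_n \to 0$ therefore splits, say $A_{n+1} = A_n \oplus C_n$ with $C_n \cong V$. Iterating gives $A_n = C_0 \oplus \cdots \oplus C_{n-1}$, and passing to the union yields $A = \bigoplus_{n < \w} C_n$, a free $V$-module of countable rank.

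The argument is a Pontryagin-type recursion, and I do not foresee a real obstacle. The only thing worth double-checking is that the chain of splittings can be chosen compatibly; this is automatic, since at stage $n$ the choice of $C_n$ has no bearing on the already-fixed decomposition $A_n = C_0 \oplus \cdots \oplus C_{n-1}$, and the telescoping is then forced.
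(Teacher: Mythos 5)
Your proposal is correct, but it is worth noting that the paper does not actually prove this theorem at all: its ``proof'' consists of the single citation \cite[Chap.~VI, Corollary 3.12]{FS2}. What you have supplied is a genuine, self-contained argument, and it holds up. The parenthetical equivalence is handled cleanly: transitivity of purity (relative divisibility) plus Theorem \ref{tf1}(i), applied once to $A$ and once to each rank one pure submodule, does exactly what you say, and in fact the countable rank hypothesis is not even needed for that half. The Pontryagin-style recursion for the freeness statement is also sound: the purification $A_n$ of $Va_0+\cdots+Va_{n-1}$ is pure of rank $n$ because $A$ is \tf\ and purity equals relative divisibility over a \vd; freeness plus rank $n$ forces $A_n\cong V^n$; and $A_{n+1}/A_n$ is \fg, \tf\ of rank one, hence $\cong V$ since \fg\ \tf\ modules over a \vd\ are free, so the splittings telescope as you describe. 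All the ingredients you invoke (finite rank pure submodules of free modules are summands, Kaplansky's theorem, Theorem \ref{tf1}(i)) are already stated in Section 3 and Section 5 of the paper. Two trivial housekeeping points: the case of finite rank should be mentioned (there $A$ is itself a finite rank pure submodule of itself, so the hypothesis applies directly), and one should note that a maximal independent set of an element $a\in A$ yields a relation with nonzero coefficient on $a$, which is what puts $a$ into some $A_n$ and gives $A=\bigcup_n A_n$. Neither affects the validity of the argument.
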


  \begin{proof} See \cite[Chap. VI, Corollary 3.12]{FS2}.
\end{proof}

\medskip


\section{Divisible and Injective Objects} \medskip   

  The theory of divisibility and injectivity clearly illustrates a fundamental difference between the classes $\CC_V^*$ and $\CC_V$. \smallskip

{\bf Divisible objects.}  Divisibility of modules is defined as usual: $D \in \CC_V^*$ is {\it divisible} if $rD=D$ for all $r \in V^\times$. Equivalently,  the equality $\Ext^1_V(V/Vr,D)=0$ holds for all $r \in V^\times$.  The prototype of divisible modules,  the quotient field $Q$ of $V$ as a $V$-module,  is in general not an object. It {\it is} exactly when $Q$ is a countably generated $V$-module (then p.d.$Q=1$, i.e. $V$ is a {\it Matlis domain}). But the module $\partial_V$ (see \cite{F}), the generator of the subcategory of the divisible modules in $V$-Mod has p.d.1, so it is an object in $\CC_V^*$. Recall that $\partial_V$ is generated by the $k$-tuples $(r_1, \dots, r_k) $\ for all $k \ge 0$ of   non-unit elements $r_i \in V^\times$, subject to the defining relations
$$  r_k ( r_1, \dots, r_{k-1}, r_k) = ( r_1, \dots, r_{k-1})   \qquad (k > 0)  \leqno (3)$$
for all choices of the $r_i$. The generator $w =(\emptyset)$ generates a submodule of $\partial_V$ isomorphic to $V$ such that $\partial_{V_0}=\partial_V/ Vw$ is a divisible torsion module of p.d.1 (which is a generator of the subcategory of divisible torsion modules in $\CC_V$).  See \cite{F} or \cite{FS2} for more details.

  As far as  the structure of divisible objects is concerned, the following information is crucial. (Pay attention to the enormous simplification over Matlis domains.)

\begin{theorem} \label{div-str} {\rm (i)}  An object in $\CC_V^*$ is divisible if and only if it is a summand of a direct sum of copies of the module $\partial_V$.

  {\rm (ii)} If $V$ is a Matlis domain, then an object is  divisible if and only if it is the direct sum of copies of $Q$ and/or $K$.
\end{theorem}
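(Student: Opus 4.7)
For (i), the ``if'' direction is immediate since divisibility is preserved under direct sums and direct summands and $\partial_V$ itself is divisible. For the converse, I would exploit the property of $\partial_V$, established in \cite{F} and \cite[Chap.~VII]{FS2}, that it is a projective generator of the full subcategory of divisible $V$-modules: for any divisible $D$ and any $d \in D$ there is a unique morphism $\varphi_d : \partial_V \to D$ sending the generator $w$ to $d$. Collecting such maps over a generating set of $D$, one obtains a short exact sequence
$$0 \to K \to \bigoplus_I \partial_V \to D \to 0,$$
and the main step is to split it. First I would verify that $K$ is divisible by applying $\Hom(V/Vr,-)$ to this sequence and using the assumption that both the middle term and $D$ are divisible together with the p.d.$\le 1$ hypothesis on $D$ (so the relevant higher $\Ext$ terms vanish). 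Then the projectivity of $\partial_V$ in the subcategory of divisibles forces $\Ext^1_V(\bigoplus_I \partial_V, K)=0$, so the sequence splits and $D$ is exhibited as a summand of $\bigoplus_I \partial_V$.

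For (ii), assume $V$ is a Matlis domain, so that both $Q$ and $K$ have p.d.$\le 1$ and hence lie in $\CC_V^*$. The ``if'' direction is again routine. For the converse, let $D$ be a divisible object and pass to its torsion submodule $tD$. The quotient $D/tD$ is torsion-free divisible, hence a $Q$-vector space, so $D/tD \cong Q^{(J)}$ for some $J$. On the other hand, Matlis's classical structure result for divisible torsion modules over a Matlis domain (see \cite[Chap.~VII]{FS2}) gives $tD \cong K^{(I)}$ for some $I$. It remains to split
$$0 \to tD \to D \to D/tD \to 0.$$
Applying $\Hom_V(Q,-)$ to $0 \to V \to Q \to K \to 0$, and using that $Q$ is injective while $\Hom_V(Q,V)=0$, yields $\Ext^1_V(Q,K)=0$. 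Extending this vanishing to the direct sums at hand (compatible with the $Q^{(J)}$ and $K^{(I)}$ structures just obtained) forces the splitting, giving the desired decomposition $D \cong K^{(I)} \oplus Q^{(J)}$.

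The main obstacle lies in (i): verifying divisibility of the kernel $K$ genuinely uses the p.d.$\le 1$ hypothesis on $D$, since otherwise an element of $K$ need not be divisible by a ring element that annihilates its image in $D$; and the ensuing splitting step tests the strength of the ``projective generator'' property of $\partial_V$ in this restricted class. Part (ii), by contrast, is mostly bookkeeping on top of classical Matlis-domain theory already developed in \cite{FS2}; the only substantive computation is the identification $\Ext^1_V(Q,K)=0$, which reduces cleanly to the injectivity of $Q$ and the exact sequence $0 \to V \to Q \to K \to 0$.
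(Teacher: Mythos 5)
The paper itself disposes of both parts by citation---part (i) is \cite[Theorem 18]{F}, which Fuchs proves for arbitrary Pr\"ufer (indeed arbitrary) domains, and part (ii) is \cite[Chap.~VII, Theorem 3.5]{FS2}---so a self-contained argument is a genuinely different route. Unfortunately your argument for (i) has a real gap, in two places. Write $N$ for the kernel of your presentation $0 \to N \to \bigoplus_I \partial_V \to D \to 0$ (to avoid a clash with $K=Q/V$). First, splitting this sequence requires $\Ext^1_V(D,N)=0$, i.e.\ the vanishing of Ext of the \emph{cokernel} into the kernel; the group $\Ext^1_V(\bigoplus_I\partial_V,N)$ that you compute classifies extensions of $N$ \emph{by the middle term} and says nothing about whether this particular sequence splits. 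The correct tool is exactly the vanishing $\Ext^1_V(C,E)=0$ for $C$ of p.d.$\le 1$ and $E$ divisible (Bazzoni--Herbera \cite{BH}, or \cite[Theorem 6]{F7} for Pr\"ufer domains; the paper invokes it in Theorem \ref{inj-div}), applied with $C=D$ and $E=N$. Second, that application still requires $N$ to be divisible, and your sketch does not establish this: the long exact sequence for $\Hom_V(V/Vr,-)$ yields
$$\Ext^1_V(V/Vr,N)\cong \mathrm{coker}\Bigl(\bigl(\textstyle\bigoplus_I\partial_V\bigr)[r]\to D[r]\Bigr),$$
and neither the divisibility of the outer terms nor p.d.$D\le 1$ forces this cokernel to vanish---it depends on the chosen presentation, and the obvious one (a copy of $\partial_V$ for each $d\in D$ with $w\mapsto d$) does not visibly hit $D[r]$ by $r$-torsion elements, since $w$ generates a copy of $V$ inside $\partial_V$. (Also, the map $\partial_V\to D$ with $w\mapsto d$ exists but is far from unique.) So the divisibility of the kernel is the real content of (i) and needs an actual argument or a citation.

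Part (ii) is essentially sound and follows the classical Matlis route that \cite[Chap.~VII]{FS2} formalizes: $D/tD$ is a $Q$-vector space, $tD$ is divisible, hence $h$-divisible over a Matlis domain, hence a direct sum of copies of $K$, and the sequence $0\to tD\to D\to D/tD\to 0$ splits. The one step to tighten is the passage ``to the direct sums at hand'': $\Ext^1_V(Q,-)$ need not commute with infinite direct sums in the second variable, so rather than extending $\Ext^1_V(Q,K)=0$ coordinatewise, argue directly that $D/tD\cong Q^{(J)}$ has p.d.$\le 1$ (Matlis domain) while $tD$ is divisible, so $\Ext^1_V(D/tD,\,tD)=0$ by the same Ext-vanishing principle as above.
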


\begin{proof} (i)  In \cite[Theorem 18]{F} it is shown that over a Pr\"ufer domain (and hence over a \vd) a divisible module has p.d.1 if and only if it is a summand of a direct sum of copies of $\partial$.  (By the way, this holds for all integral domains.)

(ii) See \cite [Chap. VII, Theorem 3.5]{FS2}.
\end{proof}

 In order to obtain a full set of invariants for a divisible object $D$, we introduce two cardinal invariants    measuring the size of its torsion and \tf\ parts. One is $ \k =\rk D$, the \tf\ rank of $D$, the number of generators of  a maximal size free submodule contained in $D$. The other invariant is  $
  \l = \gen D[r] $ for any non-unit $r \in V^\times$. Thus $\l$ is the cardinality of the set of summands $\cong V/Vr$ in a direct decomposition of $D[r]$ into  indecomposable summands.  These two cardinals form a complete set of invariants characterizing divisible objects in $\CC_V^*$. In fact,

\begin{theorem} \label{isodiv}  Assume $D$ and $D'$ are divisible objects in $\CC_V^*$.  Then $D \cong D'$ if and only if  \smallskip

{\rm (i)} their ranks are equal: $\rk D = \rk D'$; and

 {\rm (iI)}  for some, and hence for each $r \in V^\times$, $\gen D[r] = \gen D'[r] $.
 \end{theorem}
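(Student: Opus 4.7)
The only-if direction is immediate, as rank and $\gen D[r]$ are module-theoretic invariants. Moreover, by Proposition \ref{homog}(iii) and the remark following it, $\gen D[r]$ coincides with the Goldie-dimension of $D$ whenever $D[r]\ne 0$, so the quantity is automatically independent of the particular choice of non-unit $r\in V^\times$.

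For the if direction, set $\k = \rk D = \rk D'$ and $\l = \gen D[r] = \gen D'[r]$. I would first dispatch the Matlis case using Theorem \ref{div-str}(ii): one has $D \cong Q^{(\a)} \oplus K^{(\b)}$ and $D' \cong Q^{(\a')} \oplus K^{(\b')}$ for suitable cardinals. Since $\rk Q = 1$, $\rk K = 0$, $Q[r] = 0$, and $K[r] \cong V/Vr$, the hypotheses force $\a = \k = \a'$ and $\b = \l = \b'$, whence $D \cong D'$.

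For the general (non-Matlis) case I would invoke Theorem \ref{div-str}(i) to realize both $D$ and $D'$ as direct summands of direct sums $\partial_V^{(I)}$, and then analyze $\partial_V$ itself. A direct computation shows $\rk \partial_V = 1$: the defining relations (3) force every generator $(r_1,\dots,r_k)$ to become $(r_1\cdots r_k)^{-1}(w\otimes 1)$ after tensoring with $Q$, so $\partial_V \otimes_V Q \cong Q$. The plan is then to produce a Krull--Schmidt-type decomposition $D \cong \partial_V^{(\a)} \oplus D_0$, where $D_0$ is divisible and torsion. Once this is in hand, additivity of rank gives $\a = \rk D = \k$, and since $D_0$ contributes only torsion, $\gen D_0[r] = \l - \gen \partial_V^{(\a)}[r]$, reducing the problem to the uniqueness of divisible torsion objects with prescribed Goldie-dimension.

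For that reduction I would use Proposition \ref{homog}(iii): both $D_0[r]$ and $D_0'[r]$ are $V/Vr$-homogeneous and decompose as direct sums of copies of $V/Vr$ whose cardinalities are the Goldie-dimensions of $D_0$ and $D_0'$ respectively. Fixing a bijection between such index sets yields an isomorphism of $r$-socles, which can be extended by divisibility (using injectivity-type lifting inside $\CC_V^*$) to an isomorphism $D_0 \cong D_0'$; piecing this together with the $\partial_V$-parts delivers $D \cong D'$. The main obstacle is the Krull--Schmidt step: one must verify that in the category of divisible objects of $\CC_V^*$, summands of $\partial_V^{(I)}$ split off a maximal $\partial_V$-free factor. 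I expect this to follow from the fact (implicit in Theorem \ref{div-str}) that $\partial_V$ is indecomposable with suitably local endomorphism ring in the relevant sense, combined with an Azumaya-type exchange argument, but this is where the proof requires the most care.
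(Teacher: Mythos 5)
Your only-if direction and the Matlis case are fine, and for what it is worth the paper itself does not argue this theorem at all: it simply cites \cite[Theorem C]{F8} and \cite[Chap.~VII, Theorem 3.4]{FS2}. But your general case has a genuine gap at exactly the point you flag as needing the most care, and it cannot be repaired along the lines you sketch. The Krull--Schmidt step rests on $\partial_V$ being indecomposable with local endomorphism ring, whereas the paper explicitly observes (after Theorem \ref{exdiv}) that when {\rm p.d.}$Q>1$ \emph{no} indecomposable divisible object exists in $\CC_V^*$ and every non-zero divisible object has Goldie dimension at least $\gen Q$; in particular $\partial_V$ itself decomposes non-trivially and an Azumaya-type exchange argument is unavailable. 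Consequently the asserted splitting $D\cong \partial_V^{(\a)}\oplus D_0$ with $D_0$ torsion is unjustified, and in fact obtaining any such canonical decomposition is essentially equivalent to the classification you are trying to prove.

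Two further steps would fail even granting that decomposition. First, $\gen D_0[r]=\l-\gen\partial_V^{(\a)}[r]$ involves cardinal subtraction, which is not well defined for infinite cardinals; since $\gen\partial_V^{(\a)}[r]=\a\cdot\gen Q$ can already equal $\l$ (this is precisely the boundary case $\l=\max\{\k,\gen Q\}$ of Theorem \ref{exdiv}), the invariant of $D_0$ is not determined by $\k$ and $\l$. Second, the final step -- extending an isomorphism of $r$-socles $D_0[r]\cong D_0'[r]$ to $D_0\cong D_0'$ ``by divisibility'' -- is the divisible-hull argument from abelian group theory, but the paper proves that injective (divisible) envelopes exist in $\CC_V^*$ only when $V$ is a rank one discrete valuation domain; without uniqueness of a divisible hull of the socle, a map $D_0\to D_0'$ extending the socle isomorphism need not be injective or surjective, so you are again assuming the uniqueness statement you set out to prove. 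The honest options are either to quote \cite[Chap.~VII, Theorem 3.4]{FS2} as the authors do, or to reproduce its absorption-style argument, which works directly with summands of $\partial_V^{(I)}$ rather than with an indecomposable decomposition.
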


\begin{proof}  See  \cite[Theorem C]{F8} or \cite[Chap. VII,  Theorem 3.4]{FS2}.
\end{proof}

We also state the existence theorem accompanying this structure theorem.

\begin{theorem} \label{exdiv} Given the cardinals $\k, \l$, there exists a divisible object $D$ in class $\CC_V^*$ such that $\rk D = \k$ and $\gen D[r] = \l$ if and only if   \smallskip

 {\rm (i)}  in case {\rm p.d.}$Q =1$:  both $\k$ and $\l$ are arbitrary;

 {\rm (ii)} in case {\rm p.d.}$Q >1$:  $\k$ is arbitrary and $\l \ge \max\{\k, \gen Q\}.$

  \end{theorem}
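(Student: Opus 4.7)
The plan is to treat the two cases and the two directions (existence and necessity) separately, anchored on Proposition~\ref{homog}(iii) and the remark following it: whenever $D \in \CC_V^*$ is a divisible object with $D[r] \ne 0$, the quantity $\gen D[r]$ equals the Goldie-dimension $\Gd(D)$ and is independent of the non-unit $r$. Thus $\l$ is really the Goldie-dimension of $D$ as soon as $D$ has any torsion.

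For the sufficiency in Case~(i), both $Q$ and $K = Q/V$ already lie in $\CC_V^*$ because $V$ is Matlis, so I would take $D := Q^{(\k)} \oplus K^{(\l)}$. Then $\rk D = \k$ (as $K$ is torsion) while $D[r] \cong (K[r])^{(\l)} \cong (V/Vr)^{(\l)}$ forces $\gen D[r] = \l$. For the sufficiency in Case~(ii), I would use the building blocks $\partial_V$ and $\partial_{V_0} = \partial_V/Vw$, both of which lie in $\CC_V^*$ with $\rk \partial_V = 1$, $\rk \partial_{V_0} = 0$, and (the crucial input) $\Gd(\partial_V) = \Gd(\partial_{V_0}) = \gen Q$. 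Given $\k$ and $\l \ge \max\{\k, \gen Q\}$, I would set $D := \partial_V^{(\k)} \oplus \partial_{V_0}^{(\nu)}$ for a cardinal $\nu$ chosen from $\{0, 1, \l\}$ according to whether $\l$ coincides with $\max\{\k, \gen Q\}$ and whether $\k = 0$; cardinal arithmetic on direct sums then yields $\rk D = \k$ and $\gen D[r] = \l$.

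For the necessity, Case~(i) carries no restriction. In Case~(ii), let $D \ne 0$ be divisible in $\CC_V^*$ with $\rk D = \k$ and $\gen D[r] = \l$. I would first rule out a torsion-free $D$ when $\k \ge 1$: then $D$ would be a nonzero $Q$-vector space, placing $Q$ (as a submodule of $D$, hence of p.d.$\le 1$ over the valuation domain $V$) inside $\CC_V^*$, contradicting p.d.$Q > 1$. Hence $D$ has torsion, so $D[r] \ne 0$ and $\l = \Gd(D)$. The bound $\l \ge \k$ follows because a free submodule of rank $\k$ inside $D$ already exhibits $\k$ independent cyclic summands. For $\l \ge \gen Q$ I would invoke Theorem~\ref{div-str}(i) to write $D$ as a nonzero summand of some $\partial_V^{(\mu)}$ and then appeal to $\Gd(\partial_V) = \gen Q$: any nonzero summand still supports divisibility witnesses for some fixed element, so the torsion produced by the defining relations~(3) cannot be discarded when passing to a nonzero summand of the ambient direct sum.

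The main obstacle is the structural computation $\Gd(\partial_V) = \gen Q$ (and the analogous $\Gd(\partial_{V_0}) = \gen Q$), which drives both halves of Case~(ii). This rests on the explicit presentation~(3): a fixed nonzero generator $w$ forces, via distinct chains of witnesses $(r_1, \ldots, r_k)$ that realize the same product in $Q$, a family of independent torsion elements indexed by a generating set of $Q$, whence $\Gd(\partial_V) \ge \gen Q$, with the reverse inequality coming from a counting of the generators in~(3). Once these Goldie-dimensions are available, and combined with the additivity of $\rk$ and $\Gd$ across direct sums (noting that direct sums of divisible objects in $\CC_V^*$ remain divisible objects in $\CC_V^*$), the rest of the argument reduces to cardinal arithmetic.
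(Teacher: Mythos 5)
The paper does not prove this theorem at all: it simply cites \cite[Theorem 3]{F8} and \cite[Chap.~VII, Theorem 3.8]{FS2}. So you are attempting something genuinely different, namely a self-contained proof, and your high-level architecture (realize the invariants by sums of $Q$ and $K$ in the Matlis case, and of $\partial_V$ and $\partial_{V_0}$ otherwise; for necessity, rule out torsion-free divisibles and bound $\gen D[r]=\Gd(D)$ from below) is the right shape. But the argument has a genuine gap exactly at the point you yourself flag as ``the main obstacle'': the claims $\Gd(\partial_V)=\gen Q$ and, more importantly, $\Gd(D)\ge \gen Q$ for \emph{every} nonzero divisible object $D$ are asserted, not proved, and your sketched justifications do not close them. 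For the upper bound $\Gd(\partial_V)\le \gen Q$, ``counting the generators in (3)'' only gives $\Gd(\partial_V)\le |V|$ (the generators are indexed by finite tuples of non-units), and $|V|$ can strictly exceed $\gen Q$; if in fact $\Gd(\partial_V)>\gen Q$, your construction $D=\partial_V^{(\k)}\oplus\partial_{V_0}^{(\n)}$ can never realize the minimal value $\l=\max\{\k,\gen Q\}$, so the sufficiency direction of (ii) would fail. For the necessity direction, writing $D$ as a summand of $\partial_V^{(\mu)}$ and appealing to $\Gd(\partial_V)=\gen Q$ proves nothing by itself, because Goldie dimension is not bounded below on passage to summands (a single summand $V/Vr$ of a huge module has Goldie dimension $1$); the phrase ``the torsion produced by the defining relations (3) cannot be discarded when passing to a nonzero summand'' is precisely the statement that needs proof. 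The honest route here is via heights: for $0\ne d\in D$ divisible with {\rm p.d.}$D\le 1$ and {\rm p.d.}$Q>1$, the height-ideal $H_D(d)$ is all of $Q$, which by Theorem \ref{height} must be a limit height, and one must then show that realizing a limit height over a $\gen Q$-generated ideal forces at least $\gen Q$ independent ``correction'' elements (a quantitative strengthening of the remark in the proof of Theorem \ref{height} that limit heights force infinite Goldie dimension). That is the substance of \cite[Chap.~VII, Theorem 3.8]{FS2} and it is missing from your proposal.

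Two smaller points. First, in ruling out a nonzero torsion-free divisible $D$ when {\rm p.d.}$Q>1$, your parenthetical ``as a submodule of $D$, hence of {\rm p.d.}$\le 1$'' is not a valid inference: submodules of modules of {\rm p.d.}$\le 1$ over a valuation domain can have larger projective dimension (the paper's discussion of mixed objects gives examples). The correct statement is that a nonzero torsion-free divisible module is a $Q$-vector space, so $Q$ is a direct \emph{summand} of $D$, and summands do inherit {\rm p.d.}$\le 1$. Second, the identification $\gen D[r]=\Gd(D)$ (whenever $D[r]\ne 0$) that anchors your whole computation is correctly taken from the remark following Proposition \ref{homog}; that part is fine.
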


\begin{proof} We refer to  \cite[Theorem 3]{F8} or to \cite[Chap. VII,  Theorem 3.8]{FS2}.
\end{proof}

 From the foregoing results we can draw the conclusion  that in case p.d.$Q  > 1$  every divisible $D \ne 0$ in $\CC_V^*$ satisfies Gd($D) \ge \gen Q$. Furthermore,  no  indecomposable divisible object exists in $\CC_V^*$.

\smallskip
We also have the embedding result as expected:

\begin{theorem} \label{div}  Every object in  $\CC_V^*$ is a subobject of a divisible object.
\end{theorem}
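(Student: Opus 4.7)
The plan is to imitate the standard construction of a ``divisible hull'' of a projective resolution, using the special divisible generator $\partial_V$ in place of $Q$ (since $Q$ itself may fail to be an object of $\CC_V^*$).

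First, by Theorem \ref{subfree}, pick a free resolution $0 \to H \to F \to M \to 0$ in $\CC_V$ with $F,H$ free $V$-modules, say $F \cong \bigoplus_{i \in I} Vw_i$. Embed each copy of $V$ into a copy of $\partial_V$ via the canonical inclusion $Vw \hookrightarrow \partial_V$ of Section~6, and set $D_F = \bigoplus_{i \in I} \partial_V^{(i)}$. Then $D_F$ is divisible (a direct sum of divisibles), lies in $\CC_V^*$ (summand of a direct sum of copies of $\partial_V$, Theorem \ref{div-str}(i)), and the quotient $D_F/F = \bigoplus_{i\in I} \partial_{V_0}^{(i)}$ has p.d.~$\le 1$ as a direct sum of copies of $\partial_{V_0}$.

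Next, since $H \le F \le D_F$, define $D := D_F/H$. Then $M = F/H$ embeds naturally in $D$, and $D$ is divisible as an epic image of the divisible module $D_F$. It remains to check that $D \in \CC_V^*$ and that $M$ is tight in $D$. For the former, apply Kaplansky's lemma to $0 \to H \to D_F \to D \to 0$: since $H$ is free (p.d.~$0$) and $\text{p.d.}\, D_F \le 1$, we conclude $\text{p.d.}\, D \le 1$. For the latter, the Third Isomorphism Theorem gives
\[
D/M = (D_F/H)\big/(F/H) \cong D_F/F \cong \bigoplus_{i\in I}\partial_{V_0}^{(i)},
\]
which has p.d.~$\le 1$. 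Hence $M$ is tight in $D$, i.e.\ the inclusion $M \hookrightarrow D$ is a morphism in $\CC_V^*$.

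There is essentially no obstacle here beyond having $\partial_V$ and the structure result Theorem \ref{div-str}(i) in hand; the delicate point is simply that we cannot use $Q$ to construct the divisible hull when $V$ is not a Matlis domain, and the use of $\partial_V$ rather than $Q$ is exactly what guarantees both that $D_F$ lives in $\CC_V^*$ and that the extension $D/M$ retains p.d.~$\le 1$.
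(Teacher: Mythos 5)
Your argument is correct and is essentially identical to the paper's own proof: both embed the free module $F$ of a free resolution $0 \to H \to F \to M \to 0$ coordinatewise into a direct sum of copies of $\partial_V$ and then pass to the quotient by $H$. You merely spell out the details the paper leaves implicit (the Kaplansky-lemma computation of $\mathrm{p.d.}\,D$ and the identification $D/M \cong \bigoplus_{i}\partial_{V_0}$ giving tightness), all of which are accurate.
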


\begin{proof} Write $M \in \CC_V^*$ as $ M = F/H$ with free $V$-modules $F,H$. If $F = \oplus_{i \in I} Vx_i$ with $Vx_i \cong V$, then define $G =  \oplus_{i \in I} \partial_i$ with $\partial_i \cong \partial_V$, and embed $F$ in $G$ by identifying the generator $x_i$ with the generator $w_i \in \partial_i$, for all $i$. Then $F$ becomes a subobject of $G$, and $G / H$ will be a divisible module of p.d.1 that contains a copy of $M$ as a subobject.
\end{proof}

\medskip

$h$-divisibility of a $V$-module $H$ is defined by the extendibility of the homomorphisms $V \to H$ to $Q \to H$  (see \cite{M0} or \cite[p. 38]{FS2}). With the exception of the next proposition, this concept will not be discussed, considering that  $h$-divisible modules rarely exist in $\CC_V^*$, even injective objects are not $h$-divisible whenever p.d.$Q >1 $.

 \begin{proposition} \label{h-div} {\rm (i)} $h$-divisible objects exist in $\CC_V^*$  if and only if    $V$ is a Matlis domain, in which case all divisible modules are $h$-divisible.

{\rm (ii)}  If $V$ is a Matlis domain, then an  $h$-divisible object of {\rm p.d.}$1$ is the direct sum of copies of $Q$ and $K$.
\end{proposition}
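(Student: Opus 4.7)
My plan for (i) is a short height argument leveraging Theorem \ref{height}. For the $(\Leftarrow)$ direction, if $V$ is a Matlis domain then $Q$ is countably generated, so $\mathrm{p.d.}\,Q\le 1$ and $Q\in\CC_V^*$; since $Q$ is trivially $h$-divisible, this produces a nonzero $h$-divisible object. The second assertion of (i)---that over a Matlis domain every divisible $V$-module is $h$-divisible---is classical and unrelated to the subclass $\CC_V^*$, so I would cite it from \cite{M0} or \cite{FS2} rather than reprove it.

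For the $(\Rightarrow)$ direction, suppose $H\ne 0$ is an $h$-divisible object in $\CC_V^*$ and pick any $0\ne a\in H$. The definition of $h$-divisibility supplies a homomorphism $\phi:Q\to H$ with $\phi(1)=a$. Since $Q$ itself then occurs as an admissible submodule $J$ in the definition of the height-ideal $H_H(a)$, we obtain $H_H(a)=Q$, and this is a \emph{non-limit} height $h_H(a)=Q/V$ (the maximal $J$ is actually attained by $\phi$, not merely a union of strictly smaller ones). By Theorem \ref{height}, non-limit heights in objects of $\CC_V^*$ are at most countably generated; hence $Q/V$, and therefore $Q$, is countably generated, i.e.\ $V$ is a Matlis domain.

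For (ii), once $V$ is known to be Matlis the statement is immediate from Theorem \ref{div-str}(ii): the divisible objects in $\CC_V^*$ are exactly the direct sums of copies of $Q$ and $K$. Every $h$-divisible module is divisible, so an $h$-divisible object of $\mathrm{p.d.}\,1$ must have this form; conversely, any such direct sum is an epic image of a direct sum of copies of $Q$ and is therefore $h$-divisible.

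The main obstacle I anticipate is the $(\Rightarrow)$ step of (i): one must resist the temptation to assume that $\phi:Q\to H$ is injective and then analyze the $V$-module structure of the copy of $Q$ inside $H$ (which would raise the delicate question whether pure submodules of objects are again objects). The correct observation is that the definition of $H_H(a)$ only asks for \emph{some} $\phi_J:J\to H$ with $\phi_J(1)=a$, so the existence of $\phi$ alone forces $H_H(a)=Q$ regardless of $\ker\phi$, and Theorem \ref{height} can be applied immediately. The only remaining delicate point is the divisible-equals-$h$-divisible statement in the Matlis case, for which I would simply defer to \cite{FS2}.
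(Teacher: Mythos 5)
Your proof is correct, but it is genuinely more self-contained than the paper's, which disposes of both parts by citation: for (i) the paper invokes the classical fact that divisibility and $h$-divisibility coincide over a domain exactly when {\rm p.d.}$Q\le 1$ (\cite[Chap. VII, Theorem 2.8]{FS2}), and for (ii) it points to \cite[Chap. VII, Sect. 2]{FS2}. Your forward direction of (i) --- extracting from $h$-divisibility a map $\phi:Q\to H$ with $\phi(1)=a$ for a chosen $0\ne a\in H$, observing that $H_H(a)=Q$ is an \emph{attained}, hence non-limit, height, and then invoking Theorem \ref{height} to force $Q/V$, and therefore $Q$, to be at most countably generated --- is an argument internal to the paper's height machinery, and it is sound; your remark that only the existence of \emph{some} $\phi_J$ with $\phi_J(1)=a$ matters, so that $\ker\phi$ is irrelevant, is exactly the right point to isolate. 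Indeed, the paper's one-line citation directly yields only the backward implication together with the ``divisible equals $h$-divisible'' addendum; the fact that no nonzero $h$-divisible object of {\rm p.d.}$\le 1$ can exist when {\rm p.d.}$Q>1$ still needs an argument, and your height argument supplies one, so in that respect your write-up is more explicit than the paper's. For (ii), deriving the decomposition from Theorem \ref{div-str}(ii) (every $h$-divisible module is divisible, and divisible objects over a Matlis domain are direct sums of copies of $Q$ and $K$) is equivalent in substance to the paper's citation, since Theorem \ref{div-str}(ii) rests on the same source; your converse observation that such direct sums are $h$-divisible is harmless extra content not demanded by the statement.
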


 \begin{proof}  (i) It is well known that over a domain, divisibility and $h$-divisibility are equivalent if and only if p.d.$Q  \le 1$ (see e.g. \cite[Chap. VII, Theorem 2.8]{FS2}).

(ii)  See \cite[Chap. VII, Sect. 2]{FS2}.
  \end{proof}

\medskip

{\bf Injective objects.}
The role of injective modules  is played by objects $E \in \CC_V^*$ that satisfy $\Ext_V^1(A,E) =0$ for all $A \in  \CC_V^*$; i.e.   whenever $E$ is a subobject,   it must be a summand. Luckily, this property is equivalent to the more familiar extensibility of morphisms  into $E$  from subobjects to objects. But this equivalence comes with a caveat: the extended map need not be a $\CC_V^*$-morphism, since its image might not be tight. Perhaps unexpectedly, injectivity and divisibility turn out to be equivalent.

\begin{theorem} \label{inj-div} The following conditions are equivalent for an object $E \in \CC_V^*$. \smallskip

{\rm (i)} $\Ext_V^1(C,E) =0$ for all $C \in  \CC_V^*$;

 {\rm (ii)}  every morphism $\f: A \to E$ in $\CC_V^*$ extends to a  homomorphism $\psi: B \to E$   whenever $A,B$  are in $\CC_V^*$ and $ A$ is tight in $B$;

{\rm (iii)} $E$ is a divisible object.
\end{theorem}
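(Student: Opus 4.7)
The plan is to close a cycle of implications: (i) $\Leftrightarrow$ (ii) by a direct long-exact-sequence argument, (i) $\Rightarrow$ (iii) by testing on cyclically presented modules, and (iii) $\Rightarrow$ (i) by a transfinite induction along a tight filtration.

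First, I would handle (i) $\Rightarrow$ (ii). Given a morphism $\f : A \to E$ in $\CC_V^*$ and a tight inclusion $A \le B$ of objects, the quotient $B/A$ belongs to $\CC_V^*$ by definition of tightness, so applying $\Hom_V(-,E)$ to $0 \to A \to B \to B/A \to 0$ yields
\[\Hom_V(B,E) \to \Hom_V(A,E) \to \Ext_V^1(B/A,E) = 0,\]
giving the desired extension $\psi$. Conversely, for (ii) $\Rightarrow$ (i), a class in $\Ext_V^1(C,E)$ with $C \in \CC_V^*$ is represented by $0 \to E \to X \to C \to 0$; since both $E$ and $C$ have p.d.\,$\le 1$, Kaplansky's lemma forces $X$ to have p.d.\,$\le 1$ as well, so $X \in \CC_V^*$ and $E$ is tight in $X$. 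Then (ii) applied to $\id_E$ yields a splitting map $X \to E$, killing the Ext class.

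Next, (i) $\Rightarrow$ (iii) is immediate from the standard identification $\Ext_V^1(V/Vr,E) \cong E/rE$ obtained from $0 \to V \xrightarrow{r} V \to V/Vr \to 0$: since each $V/Vr$ lies in $\CC_V^*$, (i) forces $rE = E$ for every $r \in V^\times$.

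The substantive implication is (iii) $\Rightarrow$ (i). Here the plan is to invoke the existence of a tight system from Section 3 (following Lemma \ref{chain}) to write any $C \in \CC_V^*$ as the union of a continuous well-ordered ascending chain $\{C_\s\}_{\s \le \t}$ of tight subobjects with each quotient $C_{\s+1}/C_\s$ finitely presented, hence a finite direct sum of cyclically presented modules $V/Vr_i$. I then prove $\Ext_V^1(C_\s,E)=0$ by transfinite induction. For the successor step, divisibility gives $\Ext_V^1(V/Vr_i,E)=E/r_iE=0$, so $\Ext_V^1(C_{\s+1}/C_\s,E)=0$, and the induction passes through the exact sequence of Ext associated with $0 \to C_\s \to C_{\s+1} \to C_{\s+1}/C_\s \to 0$. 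The main obstacle is the limit step: I need to pass $\Ext$ through direct limits of the continuous chain. The cleanest way is an Eklof-lemma-style argument, namely, the surjectivity of $\Hom_V(C_{\s+1},E) \to \Hom_V(C_\s,E)$ (coming from the successor step's vanishing of $\Ext$) makes the inverse system $\{\Hom_V(C_\s,E)\}$ Mittag--Leffler, so $\varprojlim{}^1 = 0$ and the exact sequence
\[0 \to \varprojlim{}^1 \Hom_V(C_\s,E) \to \Ext_V^1(C,E) \to \varprojlim \Ext_V^1(C_\s,E) \to 0\]
forces $\Ext_V^1(C,E) = 0$. This completes the cycle.
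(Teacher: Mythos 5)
Your proof is correct in outline, and for the key implication it takes a genuinely different route from the paper. The paper's proof of (i)~$\Rightarrow$~(ii) is the same Hom--Ext exact sequence you use, and its (ii)~$\Rightarrow$~(iii) (extending $Vr \to E$ to $V \to E$) is equivalent to your $\Ext_V^1(V/Vr,E)\cong E/rE$ computation; you add a direct (ii)~$\Rightarrow$~(i) via a splitting of $0 \to E \to X \to C \to 0$, which is fine since Kaplansky's lemma puts $X$ in $\CC_V^*$ and $E$ is tight in $X$. The real divergence is (iii)~$\Rightarrow$~(i): the paper disposes of it in one line by citing the Bazzoni--Herbera theorem that over \emph{any} integral domain a divisible module $E$ satisfies $\Ext_R^1(C,E)=0$ for all $C$ of p.d.$\le 1$. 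You instead filter $C$ by tight submodules with \cp\ quotients and run a transfinite induction. This filtration does exist here -- it is exactly what the paper's discussion of tight systems in Section 3 provides, refined via Theorem \ref{cg} and Warfield's decomposition of \fp\ modules -- so your argument is self-contained and elementary, but it trades on structure special to valuation domains; the Bazzoni--Herbera route is shorter and works in far greater generality, where no such \cp\ filtration is available.

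One technical caveat on your limit step: the Mittag--Leffler/$\varprojlim^1$ exact sequence you invoke is standard only for countable inverse systems, whereas your chain will in general have uncountable length. The correct tool is Eklof's lemma itself (cited as \cite{E} in the paper and used in Lemma \ref{cot}): one builds, by transfinite recursion, a coherent family of partial splittings $g_\s : C_\s \to X$ of a representing extension, extending at successors because $\Ext_V^1(C_{\s+1}/C_\s,E)=0$ and taking unions at limits. This yields the same conclusion without any $\varprojlim^1$ bookkeeping, and I would replace your limit-step argument by a direct appeal to that lemma.
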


\begin{proof}
In the category $\CC_V$ we have an exact sequence
$$\Hom_V(B,E) \to\Hom_V(A,E)\to  \Ext_V^1(B/A,E) \to \dots  \leqno(4)$$

(i) $\Rightarrow$ (ii). Hypothesis implies that Ext in (4) vanishes, so the map between the two Homs is surjective.

(ii) $\Rightarrow$ (iii). Condition (ii) ensures that for every $r \in V^\times$, every map $\f: Vr \to E$ extends to $V \to E$ (note that $\f \in \CC_V^*$).  This is equivalent to the divisibility of $E$ by $r$.

(iii) $\Rightarrow$ (i). By Bazzoni--Herbera \cite{BH}, over an integral domain $R$, a module $E$ is divisible if (and only if) $\Ext_R^1(C,E) = 0$ holds for all $R$-modules $C$ of p.d.$\le 1$.  (This implication was proved in \cite[Theorem 6]{F7} for Pr\"ufer domains.)
\end{proof}

As an immediate corollary to the preceding theorem we obtain:

\begin{corollary} Direct sums of injective objects are likewise injective. In particular,  injective objects are $\Sigma$-injective, i.e. any direct sum of  copies of an injective object is injective.
\qed \end{corollary}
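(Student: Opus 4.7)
The plan is to deduce both claims directly from Theorem \ref{inj-div}, which identifies injectivity with divisibility in $\CC_V^*$. So the first move is to replace the injectivity hypothesis by divisibility: if $\{E_i\}_{i \in I}$ is a family of injective objects, then each $E_i$ is divisible, meaning $rE_i = E_i$ for every $r \in V^\times$.

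Next I would check that the direct sum $E = \bigoplus_{i \in I} E_i$ still lives in $\CC_V^*$. The paper explicitly notes that $\CC_V^*$ is closed under direct sums (cf. the paragraph preceding Section 3), so $\pd E \le 1$ and $E$ is an object. It only remains to verify that $E$ is divisible, but this is routine: for any $r \in V^\times$, the equality $r\bigl(\bigoplus_i E_i\bigr) = \bigoplus_i (rE_i) = \bigoplus_i E_i$ follows coordinate-wise from the divisibility of each $E_i$. Hence $E$ is a divisible object, and invoking the implication (iii) $\Rightarrow$ (i) of Theorem \ref{inj-div} once more, $E$ is injective in $\CC_V^*$.

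The ``in particular'' statement about $\Sigma$-injectivity is then the special case $E_i \cong E_0$ for all $i \in I$ and a fixed injective object $E_0$.

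Since every step reduces to the equivalence injective $\Leftrightarrow$ divisible (Theorem \ref{inj-div}) plus the coordinate-wise behavior of multiplication by $r$, there is no real obstacle here; the only subtlety worth flagging explicitly is that without the Bazzoni--Herbera ingredient feeding Theorem \ref{inj-div}, the statement would be far less obvious, because a priori one would have to check the extension property for arbitrary tight inclusions rather than for the multiplication maps $Vr \hookrightarrow V$.
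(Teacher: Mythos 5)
Your proposal is correct and matches the paper's intent exactly: the corollary is stated as an immediate consequence of Theorem \ref{inj-div}, i.e.\ one reduces injectivity to divisibility, notes that $\CC_V^*$ is closed under direct sums, and checks divisibility coordinate-wise. Nothing further is needed.
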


A well-known test for the injectivity of a module is that its extensions by cyclic modules are splitting. In $\CC_V^*$ this criterion simplifies to \cp\ modules:

\begin{theorem} \label{test} An object $E \in \CC_V^*$ is injective if and only if $\Ext_V^1(C, E)=0$ holds for all \cp\ objects $C$.
\qed \end{theorem}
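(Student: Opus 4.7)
The plan is very short: reduce the statement to Theorem~\ref{inj-div}, which tells us that injectivity in $\CC_V^*$ is equivalent to divisibility, and then observe that testing against \cp\ objects is exactly a divisibility criterion.

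The forward direction is immediate. Each \cp\ module $V/Vr$ ($r\in V^\times$) is an object of $\CC_V^*$, so if $E$ is injective in $\CC_V^*$, condition (i) of Theorem~\ref{inj-div} gives $\Ext_V^1(V/Vr,E)=0$ for every such $C=V/Vr$.

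For the converse, I would start from the short exact sequence
$$ 0 \to V \xrightarrow{\ \cdot r\ } V \to V/Vr \to 0 $$
(valid for $r\in V^\times$ since $V$ is a domain). Applying $\Hom_V(-,E)$ and using $\Ext_V^1(V,E)=0$ (because $V$ is free), the resulting long exact sequence yields the isomorphism
$$ \Ext_V^1(V/Vr,E) \;\cong\; E/rE. $$
Thus the hypothesis $\Ext_V^1(C,E)=0$ for all \cp\ objects $C$ is equivalent to $rE=E$ for every $r\in V^\times$, i.e.\ to $E$ being divisible. Invoking the implication (iii)$\Rightarrow$(i) of Theorem~\ref{inj-div} then gives that $E$ is injective in $\CC_V^*$, as required.

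There is essentially no obstacle: the only thing to notice is that the test can be restricted to principal annihilators (which is automatic, since the $V/Vr$ are precisely the \cp\ objects) and that this suffices for divisibility over a valuation domain. The substantive content has already been absorbed into Theorem~\ref{inj-div} via the Bazzoni--Herbera input.
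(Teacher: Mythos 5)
Your proposal is correct and matches the paper's (implicit) argument: the paper states this theorem with no written proof precisely because it is the immediate combination of Theorem~\ref{inj-div} with the identification $\Ext_V^1(V/Vr,E)\cong E/rE$, which the paper has already recorded as the Ext-characterization of divisibility in the paragraph defining divisible objects. Your only extra care --- noting that the \cp\ objects are exactly the $V/Vr$ up to isomorphism --- is a correct and harmless observation.
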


 It is well known in commutative module theory  that every module contains a unique maximal divisible submodule. This is not true in $\CC_V^*$ in general, because the relevant property that the sum of two divisible objects is again one fails; indeed, the property of being of p.d. at most $1$ is frequently lost when forming the sum.

\begin{example} \label{non-mixed} We exhibit  an injective object  which is a mixed $V$-module, but neither torsion, nor torsion-free, nor mixed as an object of $\CC_V^*$.    Let $V$ be a valuation domain such that p.d.$Q=3$, and consider the divisible $V$-module $\partial_V$ defined above. As p.d.$\partial_V=1$, we have $\partial_V \in \CC_V^*$.   The torsion submodule $T$ of $\partial_V$ has p.d.2, since $\partial_V/T \cong Q$. Therefore $\partial_V$, as an object of $\CC_V^*$, is neither torsion, nor torsion-free, nor mixed.
\qed \end{example}

The following corollary is obvious in view of our discussion of the divisible objects.  (For the following (i), cf. Theorem \ref{div}.)

\begin{corollary} \label{inj-res}
{\rm (i)} Every object embeds as a subobject in an injective object.

{\rm (ii)} Objects that are epic images of injective objects  $($modulo subobjects$)$ are themselves injective.

{\rm (iii)} Every object $M$ admits an injective resolution, that is an exact sequence $0 \to M \to A  \to B \to 0$ of modules of {\rm p.d.}$\le 1$ where $A,B$ are injective objects.

{\rm (iv)} The injective dimension of any object in $\CC_V^*$ is $0$ or $1$.
\qed  \end{corollary}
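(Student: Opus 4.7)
The plan is to deduce all four parts from the two principal facts already established in this section: Theorem~\ref{inj-div}, which identifies injectivity with divisibility in $\CC_V^*$, and Theorem~\ref{div}, which embeds every object as a subobject of a divisible one. Since divisibility behaves well under quotients, these two inputs should suffice without further machinery.

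For (i), I would just combine Theorem~\ref{div} with Theorem~\ref{inj-div}: the divisible overmodule constructed there is automatically an injective object of $\CC_V^*$, and $M$ sits in it as a subobject by construction.

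For (ii), take $M = E/N$ where $E$ is injective and $N$ is a subobject (so $N$ is tight in $E$). The key checks are: (a) $M$ actually lies in $\CC_V^*$, which is automatic since $N$ tight in $E$ means $\mathrm{p.d.}(E/N)\le 1$; and (b) $M$ is divisible, which is immediate because any quotient of a divisible module is divisible ($rE=E$ forces $r(E/N)=E/N$). Now Theorem~\ref{inj-div} converts divisibility back to injectivity.

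For (iii), use (i) to embed $M$ as a subobject in an injective object $A$, and set $B:=A/M$. Because $M$ is tight in $A$, $B$ belongs to $\CC_V^*$ (this is just the definition of tightness), and the sequence $0\to M\to A\to B\to 0$ is exact in $\CC_V$ with all three terms in $\CC_V^*$. Part (ii) applied to the tight inclusion $M\hookrightarrow A$ shows that $B$ is injective.

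Finally (iv) is a direct reading of (iii): if $M$ is not itself injective then the sequence in (iii) shows $\mathrm{id}\, M\le 1$, hence exactly $1$; otherwise $\mathrm{id}\, M=0$. The only mild subtlety I anticipate is purely notational, namely being careful that in (ii) the ``epic image modulo subobjects'' really means $E\to E/N$ with $N$ a \emph{tight} submodule of $E$, since otherwise $E/N$ need not even lie in $\CC_V^*$; once that is fixed, there is no real obstacle.
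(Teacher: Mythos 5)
Your proposal is correct and follows exactly the route the paper intends: the corollary is stated with \qed as ``obvious in view of our discussion of the divisible objects,'' and the intended argument is precisely your combination of Theorem~\ref{div} (embedding as a tight submodule of a divisible object) with Theorem~\ref{inj-div} (injective $=$ divisible), plus the observation that quotients of divisible modules by tight submodules are again divisible objects. Your parenthetical care about reading ``modulo subobjects'' as quotients by \emph{tight} submodules is exactly the point the paper's phrasing is guarding, so nothing is missing.
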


 Let us pause for a moment to answer a question concerning the existence of injective envelopes in the class $\CC_V^*$.   By the {\it injective envelope}  of an object $M$ we mean an injective object $E(M)$ containing $M$  as a subobject such that for every injective object $E$ containing $M$ as a subobject, the identity map of $M$ extends to a tight embedding $E(M) \to E$.  Of course, if an envelope exists, it is then   unique up to isomorphism.

 \begin{theorem} All modules in the class  $\CC_V^*$ admit injective $($divisible$)$ envelopes if and only if  $V$ is a  rank one discrete \vd.
\end{theorem}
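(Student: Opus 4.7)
My plan is to handle the two directions separately, with the forward direction being essentially routine and the reverse direction requiring an essentiality-style argument carefully adapted to $\CC_V^*$.

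For sufficiency, if $V$ is a rank one discrete valuation domain, then the global dimension of $V$ equals $1$, so every $V$-module has projective dimension at most $1$, and $\CC_V^*$ coincides with $\CC_V$ with all of its usual morphisms. The classical Eckmann--Schopf construction supplies injective envelopes in $\CC_V$, and over a DVR injectivity agrees with divisibility, so these serve as the required divisible envelopes.

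For necessity, suppose all objects of $\CC_V^*$ admit envelopes. I plan to establish the following essentiality-type observation: if $E(M)$ is the envelope of $M$ and $S$ is a finitely generated subobject of $E(M)$ with $S\cap M=0$, then modding out $S$ produces another injective in which $M$ still sits tightly (by Corollary \ref{inj-res}(ii) together with Proposition \ref{fp-sub}(iii), since $M+S$ is finitely generated hence tight in $E(M)$). The envelope property then gives a tight embedding $\varphi: E(M)\hookrightarrow E(M)/S$ extending $\id_M$, which splits off because $\varphi(E(M))$ is injective, yielding $E(M)/S\cong E(M)\oplus Y$ in $\CC_V^*$. Combining this with the uniqueness theorem \ref{isodiv} and the invariants of divisibles from Theorem \ref{exdiv} (rank and $\gen D[r]$, which barely change on killing a cyclic torsion subobject) will force $S=0$.

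I will apply this first to $M=V$. Any nonzero torsion element $e\in E(V)$ would produce a cyclic torsion subobject $Ve$ (principal annihilator by Lemma \ref{ann}, tight by Proposition \ref{fp-sub}(i)) with $Ve\cap V=0$, contradicting the essentiality claim. Hence $E(V)$ is torsion-free, and being divisible of rank one it is isomorphic to $Q$; since $E(V)\in\CC_V^*$, this forces $\mathrm{p.d.}\,Q\le 1$, i.e.\ $V$ is Matlis. Then I will apply the same argument to a countably generated non-principal ideal $J$ (which exists whenever $V$ is Matlis but not a DVR, e.g.\ $J=\mathfrak m$ if $\mathfrak m$ is not principal) to conclude $E(J)\cong Q$. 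Tightness of $J$ in $Q$ would require $\mathrm{p.d.}\,Q/J\le 1$; however, Kaplansky's dimension lemma applied to $0\to J\to V\to V/J\to 0$ with $\mathrm{p.d.}\,J=1$ and $\mathrm{p.d.}\,V=0$ yields $\mathrm{p.d.}\,V/J=2$, and then $0\to V/J\to Q/J\to K\to 0$ with $\mathrm{p.d.}\,K=1$ (Matlis) forces $\mathrm{p.d.}\,Q/J=2$, a contradiction. Hence $V$ admits no non-principal countably generated ideal and is therefore a DVR.

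The hardest step will be the essentiality claim itself. Since cancellation for injective objects in $\CC_V^*$ fails in general (their torsion invariants absorb smaller cardinals), one cannot immediately conclude $Y=0$ from $E(M)\cong E(M)\oplus Y$; instead, one must iterate the construction over a generating family of cyclic torsion subobjects, using Goldie-dimension estimates and the structure theorems \ref{div-str}--\ref{exdiv} to pin down the isomorphism type of the envelope precisely enough to block the existence of nonzero torsion summands. Managing this bookkeeping—particularly distinguishing the Matlis and non-Matlis subcases in the invariant comparison—is the main obstacle of the proof.
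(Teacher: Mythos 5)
Your sufficiency direction is fine and matches the paper. The necessity direction, however, rests entirely on the ``essentiality claim,'' and that claim is exactly where the argument breaks down. You propose to derive $S=0$ from the splitting $E(M)/S\cong E(M)\oplus Y$ by comparing the invariants $\rk$ and $\gen D[r]$ of Theorem \ref{isodiv}. This cannot work: if $S$ is a nonzero finitely generated torsion subobject with $S\cap M=0$, then $E(M)/S$ and $E(M)$ have \emph{identical} invariants whenever those invariants are infinite --- and by Theorem \ref{exdiv}(ii) they are always infinite (indeed $\ge\gen Q$) in the non-Matlis case, which is precisely the case you need for the first application $M=V$. So $E(M)/S\cong E(M)$, no contradiction is reached, and nothing forces $S=0$. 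You acknowledge this yourself, but the proposed repair (``iterate over a generating family of cyclic torsion subobjects, using Goldie-dimension estimates'') is not an argument: no iteration is specified, no quantity decreases, and the same absorption phenomenon defeats each step. Note also that the envelope in this paper is defined by a universal mapping property (a tight embedding into every injective containing $M$ as a subobject), not by essentiality, so essentiality is a genuinely nontrivial consequence to extract --- it is the whole content of the theorem in disguise. As written, the necessity direction is circular in the sense that it assumes the one thing it needed to prove.

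For comparison, the paper's proof avoids essentiality altogether and is much shorter. In the Matlis non-DVD case it uses the structure theorem that divisible objects are direct sums of copies of $Q$ and $K$ (Theorem \ref{div-str}(ii)), and observes that a \cg\ non-principal ideal cannot be tight in such a module (your $\pd Q/J=2$ computation is the right calculation here, and is correct); so that ideal has no envelope. In the case $\pd Q>1$ it uses the observation following Theorem \ref{exdiv} that no indecomposable divisible object exists, whereas an envelope of $V$ would have to be an indecomposable summand of $\partial_V$. If you want to salvage your plan, you should replace the invariant comparison by these structural facts: in the non-Matlis case you do not need essentiality at all, since \emph{every} nonzero divisible object already has a large torsion part, and in the Matlis case the classification of divisibles pins down the candidate envelopes without any essentiality argument.
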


\begin{proof} If $V$ is a DVD, then  $\CC_V$ and $\CC_V^*$ are identical, and the claim in  $\CC_V$ is well-known.  If $V$ is a Matlis domain, then all divisible modules are $h$-divisible,  they are direct sums of copies of $Q$ and/or $K$. If $V$ is not a DVD, then it contains a countably generated ideal, and  such an ideal cannot be tight in a direct sum of $Q$s.
Finally, if p.d.$Q >1$, then the $\CC_V^*$-injective envelope of $V$ should be an indecomposable summand of $\partial_V$, but |  as observed above | such a summand does not exist.
\end{proof}

\medskip

 \section{ Pure-Injectivity} \label{pi} \medskip

Pure-injectivity in $\CC_V^*$ can be  developed like in traditional module theory (see e.g. \cite[Chap. XI, Section 2]{FS1}  and \cite[Chap. XIII, Sections 2-3]{FS2}). As expected, there are several changes, so we provide  the whole proof, but skip routine arguments.

By a {\it system of equations}  (with unkowns $x_j \ (j \in J)$) over a module $M$ we mean a system of linear  equations
$$\sum_{j \in J_i}r_{ij} x_j = a_i \in M \qquad (r_{ij} \in V, \  i \in I)  \leqno(5)$$
for $i \in I,\  j \in J$ where $I,J$ are arbitrary index sets, and $J_i $ is a finite subset of $J$ for each $i \in I$.   Let $F$ denote the free module generated by the unknowns $x_j \ (j \in J)$  and $H$  its submodule generated  by the left sides  of the equations for all $i \in I$. We consider only {\it consistent} systems; i.e., systems that do not contain hidden contradiction. This means that we get a genuine homomorphism $\f: H \to M$ by mapping the generators of $H$ onto the elements of $M$ as shown by the equations, i.e. $\f: \sum_{j \in J_i}r_{ij} x_j \mapsto a_i $. It is easy to check that  the system has a solution in $M$ if and only if $\f$ extends to a homomorphism $\f^*: F \to M$, in which case $x_i = \f^*(x_i) \in M$ are the solutions.  It is pretty obvious that a consistent system defines an extension $M^*$ of $M$ by $F/H$ by adjoining to $M$ the unknowns $x_i$ as generators subject to the defining relations (5). Furthermore, it is straightforward to check that $M$ will be pure in $M^*$ exactly if (5) is finitely solvable in $M$, i.e. the finite subsystems of (5) admit solutions in $M$.

 We   define  p.d.($F/H$) as the
 {\it projective dimension} of the system (5). It will be convenient to call  (5) {\it  an adequate  equation system}  if  1) it is   consistent;  2)  its p.d.   is $\le 1$;  and 3)  it is finitely solvable.

An object  $M \in \CC_V^*$ is said to be  {\it pure-injective} if it has either one of the  equivalent  properties listed in the following theorem.  

\smallskip

 \begin{theorem} \label{pi} For an object $M$, $(\a)$-$(\g)$ are equivalent properties: \smallskip

 $(\a)$ $\Pext_V^1(C,M) = 0$ for all objects $C$.

$(\b)$ If $A$ is a pure subobject of object $B$,   then every $\CC_V^*$-map  $A \to M$ extends to a homomorphism $B \to M$   $($that need not be a $\CC_V^*$-map$)$.

 $(\g)$ Every  adequate   equation system over $M$ has a global solution in $M$.
\end{theorem}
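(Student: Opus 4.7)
The plan is to verify the cycle $(\a) \Rightarrow (\b) \Rightarrow (\g) \Rightarrow (\a)$, using the standard pushout/free-resolution dictionary between $\Pext$-vanishing, extension of maps, and solvability of systems, but being careful at every step that the auxiliary modules produced remain in $\CC_V^*$.

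For $(\a) \Rightarrow (\b)$, given a $\CC_V^*$-map $\f: A \to M$ with $A$ a pure subobject of $B$, I would form the pushout $N$ of $\f$ along the inclusion $A \hookrightarrow B$. Since pushouts preserve purity, the induced sequence $0 \to M \to N \to B/A \to 0$ is pure-exact; tightness of $A$ in $B$ places $B/A$ in $\CC_V^*$ and thus $N \in \CC_V^*$. Hence this sequence represents an element of $\Pext_V^1(B/A,M) = 0$, and any splitting $N \to M$ precomposed with $B \to N$ extends $\f$ as required.

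For $(\b) \Rightarrow (\g)$, given an adequate system (5), I would take $F$ free on the unknowns $x_j$, let $H \le F$ be the submodule generated by the left-hand sides $\sum r_{ij}x_j$, and define $\f: H \to M$ by $\sum r_{ij}x_j \mapsto a_i$ (well-defined by consistency). Pushing out $\f$ along $H \hookrightarrow F$ yields $0 \to M \to M^* \to F/H \to 0$. Condition (2) of adequacy guarantees $F/H \in \CC_V^*$, so $M^* \in \CC_V^*$ and $M$ is tight in $M^*$; condition (3) translates precisely to the purity of $M$ in $M^*$. Applying $(\b)$ to the identity on $M$ produces a retraction $\psi: M^* \to M$, and the elements $\psi(\bar x_j) \in M$, where $\bar x_j$ denotes the image of $x_j$ in $M^*$, supply a global solution of (5).

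For $(\g) \Rightarrow (\a)$, I would start from a pure-exact sequence $0 \to M \to N \to C \to 0$ with $C \in \CC_V^*$, take a free presentation $0 \to H \to F \to C \to 0$ (with $H$ free by Theorem \ref{subfree}, since $F/H = C$ has p.d.$\le 1$), lift $F \to C$ to $\s: F \to N$, and read off the system $\sum r_{ij}x_j = a_i$ with $a_i := \s(\sum r_{ij}x_j) \in M$. Consistency and the p.d.$\le 1$ condition are automatic, while finite solvability holds because $\s$ already solves every finite subsystem in $N$ and $M$ is pure in $N$. Applying $(\g)$ yields $m_j \in M$ solving the whole system; then $x_j \mapsto \s(x_j) - m_j$ defines a homomorphism $F \to N$ that vanishes on $H$, hence descends to a section $C \to N$ splitting the original sequence. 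The main technical hazard throughout is p.d.-bookkeeping: keeping the pushouts $N$ and $M^*$ inside $\CC_V^*$ is exactly where tightness of $A$ in $B$ in the first implication and condition (2) of adequacy in the second are indispensable, which also explains why the three clauses packaged into the notion of ``adequate'' cannot be decoupled from one another.
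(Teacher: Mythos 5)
Your proposal is correct and follows essentially the same route as the paper: the same cyclic scheme $(\a)\Rightarrow(\b)\Rightarrow(\g)\Rightarrow(\a)$, with the pushout along $A\hookrightarrow B$ for the first implication, the pushout extension $M^*$ of $M$ by $F/H$ (with $(\b)$ applied to the identity of $M$) for the second, and the free resolution of $C$ lifted into $N$ for the third. Your p.d.-bookkeeping remarks make explicit exactly the points the paper leaves implicit, so there is nothing to correct.
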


\begin{proof} All modules   in this proof are objects in $\CC_V^*$.

($\a) \Rightarrow (\b$) Assuming $(\a)$,  consider the following push-out diagram   where  the top sequence is pure-exact  and $\zeta$ is a $\CC_V^*$-morphism.
 \[
        \begin{CD}
            0      @>>>    A    @>>>    B @>>>    C   @>>>       0 \\
                             &&   @VV\zeta V   @VVV  @|        \\
                       0 @>>>    M    @>>>    N@>>>    C  @>>>        0\\     \end{CD}
\]
Then the bottom row is also pure-exact, so it splits by hypothesis. Hence there is a homomorphism $B \to M$ making the upper triangle $ABM$ commute. This is an extension  of $\zeta$, proving $(\b)$.

($\b) \Rightarrow (\g$) Given an adequate    equation system (5) over $M$, consider the corresponding free module $F$ and  its  submodule $H$.  If (5) is viewed as a system over $M^*$, then by construction, there is an extension $\psi: F \to M^*$ of $\f: H \to M \le M^*$. This means that (5) is solvable in $M^*$. Hypothesis $(\b)$ implies that $M$ is a summand of $M^*  $. Hence $\psi$ followed by the projection $M^* \to M$ yields a desired extension $F \to M$ of $\f$.

($\g) \Rightarrow (\a$)   In the next diagram, let the bottom row  represent a pure extension of $M$ by $C$, and the top row a free resolution of $C$.  The map $\f^*$ exists because $F$ is free (making the right square commute).  It is evident that its restriction $\f$  to $H$ makes   the left square commute.   As $H$ is tight in $F$, the pair $\{H,F\}$ along with $\f$ defines a  system (5) of equations that is finitely solvable  in $M$, due the purity of the bottom sequence. Thus (5) is an adequate  system, and hence condition $(\g)$ implies that there exists a map $F \to M$ that makes the maps in the upper triangle $HFM$ commute. Then the bottom sequence splits,  establishing ($\a$).
 \[
        \begin{CD}
            0      @>>>    H   @>>>    F @>>>    C   @>>>       0 \\
                             &&   @V\f VV   @VV\f^* V  @|        \\
                   0       @>>>   M @>>>    N @>>>   C    @>>>        0\\
                         \end{CD}
\]  
\end{proof}

Evidently, divisible (i.e. injective) objects are pure-injective. Moreover, they contain a lot of pure-injective subobjects | as is shown by the following theorem.

 \begin{theorem} \label{Div[-r} Let $D$ be a divisible   object in $\CC_V^*$.  For every  $r \in V^\times$, the submodule $D[r] = \{d \in D\ | \ rd=0\}$ is a pure-injective object. \end{theorem}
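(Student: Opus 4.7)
My plan is to verify condition $(\gamma)$ of Theorem \ref{pi}: every adequate equation system over $D[r]$ admits a global solution in $D[r]$.

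Let $(5)$: $\sum_j r_{ij} x_j = a_i$ with $a_i \in D[r]$ be an adequate system over $D[r]$, so its associated module $M = F/H$ has p.d.\,$\le 1$ and $(5)$ is finitely solvable in $D[r]$. Viewed over the ambient module $D \supseteq D[r]$, $(5)$ remains adequate since consistency, p.d., and finite solvability all pass upwards. Since $D$ is divisible and hence injective (Theorem \ref{inj-div}), it is in particular pure-injective, so Theorem \ref{pi}$(\gamma)$ supplies a global solution $\{d_j\} \subset D$.

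The crux is to adjust $\{d_j\}$ so that it lies in $D[r]$. Putting $c_j := r d_j \in D$, the identity $\sum_j r_{ij} c_j = r a_i = 0$ shows that $\{c_j\}$ solves the homogeneous system attached to $(5)$. If I can produce $\{e_j\} \subset D$ with $r e_j = c_j$ and $\sum_j r_{ij} e_j = 0$, then $d_j' := d_j - e_j$ lies in $D[r]$ and $\sum_j r_{ij} d_j' = a_i$, giving the desired solution. The existence of such $\{e_j\}$ is a solvability problem for the auxiliary system $(5')$: $\sum_j r_{ij} y_j = 0$, $r y_j = c_j$, over $D$. Consistency is automatic, and finite solvability of $(5')$ in $D$ is inherited from finite solvability of $(5)$ in $D[r]$ via the substitution $y_j := d_j - s_j$, where $\{s_j\} \subset D[r]$ is a finite solution of the corresponding subsystem of $(5)$; one checks $r y_j = c_j$ (since $r s_j = 0$) and $\sum_j r_{ij} y_j = a_i - a_i = 0$.

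The main difficulty is to show that $(5')$ is adequate, i.e.\ that its defining module $F/(H + rF) = M/rM$ has p.d.\,$\le 1$, for this may fail in general when $M$ is torsion-free (the exact sequence $0 \to M \xrightarrow{r} M \to M/rM \to 0$ only bounds p.d.$(M/rM)$ by $2$). To circumvent this, I would invoke the $V/Vr$-homogeneity of $D[r]$ from Proposition \ref{homog}(iii) together with the adjunction $\Hom_V(X, D[r]) \cong \Hom_V(X/rX, D)$, which reformulates the extension problem for a $\CC_V^*$-morphism $A \to D[r]$ along a pure embedding $A \to B$ as extending $A/rA \to D$ along the pure-exact sequence obtained by tensoring with $V/Vr$. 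The pure-injectivity of the finitely presented module $V/Vr$ (see the corollary following Theorem \ref{pi}) combined with pure-injectivity of $D$ is what produces the desired extension, and tracing through the adjunction delivers the sought-after global solution of $(5)$ in $D[r]$.
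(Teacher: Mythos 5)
Your reduction in the second and third paragraphs is fine as far as it goes: if the auxiliary homogeneous system $(5')$ could be solved globally in $D$, then $d_j' = d_j - e_j$ would indeed solve $(5)$ inside $D[r]$, and your verification that $(5')$ is consistent and finitely solvable is correct. But the obstacle you then name is fatal to this route, not a technicality to be ``circumvented'': the defining module of $(5')$ is $F/(H+rF) \cong M/rM$, and over a non-Noetherian valuation domain this can have projective dimension $2$ even when {\rm p.d.}$M \le 1$ (for $M$ a suitable countably generated ideal $J$, the quotient $J/rJ$ contains elements with non-principal, countably generated annihilators, which Lemma \ref{ann} rules out for objects). So $(5')$ need not be adequate, condition $(\g)$ of Theorem \ref{pi} applied to $D$ says nothing about it, and the correction step is unjustified. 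Your final paragraph concedes this and sketches a repair, but the sketch is not carried out, and its key cited ingredient --- the pure-injectivity of $V/Vr$ via ``the corollary following Theorem \ref{pi}'' --- is circular inside this paper: Example \ref{ex-pi} and Corollary \ref{fppi} obtain the pure-injectivity of direct sums of copies of $V/Vr$ precisely by identifying such a sum with $D[r]$ for a divisible object $D$, i.e.\ they sit downstream of the very statement you are proving. (The fact itself is classical and could be imported from \cite{FS2}, but then it must be cited as such.)

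The adjunction you mention in passing, $\Hom_V(X, D[r]) \cong \Hom_V(X/rX, D)$, is essentially the whole proof in the paper, and it requires neither homogeneity nor equation systems. Use condition $(\b)$ of Theorem \ref{pi} directly: first note that $D/D[r] \cong rD = D$ has {\rm p.d.}$\le 1$, so $D[r]$ is a subobject of $D$. Given a pure subobject $A$ of an object $B$ and a $\CC_V^*$-map $\xi : A \to D[r]$, the map $\xi$ kills $rA$ and hence factors through $\xi' : A/rA \to D[r] \le D$; purity of $A$ in $B$ (that is, $rA = A \cap rB$) makes the induced map $A/rA \to B/rB$ a pure monomorphism; $\xi'$ then extends to $\xi'' : B/rB \to D$ because $D$ is divisible, hence injective and pure-injective by Theorem \ref{inj-div}; since $r(B/rB) = 0$, the image of $\xi''$ automatically lies in $D[r]$, and composing $\xi''$ with the canonical surjection $B \to B/rB$ gives the required extension $B \to D[r]$. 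Replacing the equation-system detour by this factorization closes the gap.
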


\begin{proof}  From  the isomorphism $ D/ D[r] \cong D$ and from p.d.$D =1$ we infer that $D[r]$ is tight in $D$.
 Let $A$ be a pure subobject of object $B$, and $\xi: A \to D[r]$ a $\CC_V^*$-map. $\xi$ induces a map $\xi': A/rA \to D[r]$ that extends (by purity) to $\xi'': B/rB \to D$. Evidently, $\Im \xi'' \le D[r]$ as well, so the canonical map $B \to B/rB$ followed by $\xi''$ yields a desired extension $B \to D[r]$ of $\xi$.
 \end{proof}

Imitating the proof of Eklof--Mekler \cite[Chap. V, Corollary 1.3]{EM}, we verify:

\begin{theorem} \label{ppi} Every object in $\CC_V^*$ is a pure subobject in a pure-injective object. \end{theorem}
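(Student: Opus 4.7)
I would imitate the Eklof--Mekler transfinite construction, building the desired pure-injective $M^*$ as the union of a long enough chain of pure-tight enlargements of $M$, where each successor step realizes solutions to every adequate equation system over the current stage. The one-step construction goes as follows. Given $N\in\CC_V^*$ and a family $\{\Sigma_k\}_{k\in K}$ of adequate equation systems over $N$, let $F_k$ denote the free module on the unknowns of $\Sigma_k$, $H_k\le F_k$ the submodule generated by the left-hand sides, and $\f_k\colon H_k\to N$ the map determined by the right-hand sides; set $F=\bigoplus_k F_k$, $H=\bigoplus_k H_k$, and assemble the $\f_k$ into a single $\f\colon H\to N$. Define $N^+$ to be the push-out of the inclusion $H\hookrightarrow F$ along $\f$, exactly as in the proof of Theorem~\ref{pi}. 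Since the projective dimension of a direct sum is the supremum of the projective dimensions of the summands, $F/H=\bigoplus_k F_k/H_k$ has p.d.$\le 1$, so $N^+\in\CC_V^*$ and the short exact sequence $0\to N\to N^+\to F/H\to 0$ exhibits $N$ as tight in $N^+$; finite solvability of each $\Sigma_k$ in $N$ translates into purity of $N$ in $N^+$.

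Next comes the transfinite chain: choose a regular cardinal $\k$ sufficiently large in comparison with $|V|$ and $|M|$, and define inductively $M_0=M$, $M_{\a+1}=(M_\a)^+$ obtained by simultaneously solving every adequate system over $M_\a$, and $M_\a=\bigcup_{\b<\a}M_\b$ at limits. Lemma~\ref{chain} guarantees that each $M_\a$ lies in $\CC_V^*$ and that every previous $M_\b$ remains tight in $M_\a$. Purity of $M$ inside each $M_\a$ is preserved at every stage because the relative-divisibility reformulation of purity ($rB\cap A=rA$) passes through the push-out extension at successors and through directed unions at limits. Set $M^*=\bigcup_{\a<\k}M_\a$.

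By Theorem~\ref{pi}$(\g)$, $M^*$ is pure-injective as soon as every adequate equation system over it has a solution in it. A standard L\"owenheim--Skolem-style reflection argument (as in Eklof--Mekler) reduces this to adequate systems whose sets of parameters and unknowns have cardinality $<\k$; for such a $\Sigma$, regularity of $\k$ places the parameters inside some $M_\a$ with $\a<\k$, so $\Sigma$ was among the systems solved in the passage to $M_{\a+1}\subseteq M^*$. The main obstacle is precisely this reflection step: one must verify inside $\CC_V^*$ that adequate systems of bounded support suffice to witness pure-injectivity, which requires a careful cardinality bookkeeping and the fact that any adequate system is, up to equivalence of extensions, determined by its bounded-size subsystems. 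Once that reduction is settled, the rest is routine, with Lemma~\ref{chain} keeping the chain inside $\CC_V^*$ and standard arguments preserving purity along directed unions of pure embeddings.
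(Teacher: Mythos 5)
Your proposal is correct and follows essentially the same route as the paper: the Eklof--Mekler-style transfinite chain, adjoining at each successor stage the unknowns of all adequate equation systems (your push-out along $H\hookrightarrow F$ is exactly the paper's ``adjoining generators subject to the defining relations''), taking unions at limits, and verifying condition $(\g)$ of Theorem~\ref{pi} via the cofinality/bounded-support argument that the paper compresses into its closing parenthetical about stopping at systems with $\l$ unknowns for a fixed uncountable $\l>|V|$. Your explicit insistence on a regular $\k$ and your flagging of the reflection step merely spell out details the paper leaves as ``straightforward to check.''
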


 \begin{proof}  Select any cardinal $\k > \max\{|V|, \al_0\}$. Given a module $M \in \CC_V^*$, we define a continuous well-ordered ascending chain $\{M_\s \ |\ \s < \k\}$ of length $\k$ as follows. Start with $M_0 =M$. If for some $\s$ the modules $M_\r$ of p.d.$\le 1$  have been constructed for all $\r \le \s$, then define $M_{\s+1}$  by adjoining to $M_\s$ the unknowns (as additional generators) of every  adequate   equation system   with defining relations given by the systems. It is readily checked that then p.d.$M_{\s+1} \le 1$ as well,  and    $M_\s$ will be tight and pure in $M_{\s+1}$ such that all the  adequate   equation systems    over $M_\s$  are solvable in $M_{\s+1}$.  At limit ordinals, we take the union which will again have p.d.$\le 1$ and will contain all previously constructed $M_\r$s as tight pure submodules. It is straightforward to check that the union  of the constructed chain will satisfy condition $(\g)$, and thus it will be a pure-injective object containing $M$ as a pure subobject.
(The process can stop at systems with   $\l$   unknowns, where $\l$ is any uncountable cardinal $> | V|.)$
\end{proof}

The next proposition  implies that the first Ulm-submodule of a pure-injective object is an injective object whenever its p.d. is $\le 1$.

\begin{proposition} \label{Ulm} The first Ulm-submodule $M^1 = \cap_{r \in V^\times} rM$ of a  pure-injective object $M$ satisfies $\Ext_V^1(C, M^1)=0$ for all objects $C$.
\end{proposition}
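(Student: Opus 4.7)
The plan is to show that $M^1$ is divisible as a $V$-module; by the Bazzoni--Herbera theorem invoked in the proof of Theorem \ref{inj-div}$(\text{iii})\Rightarrow(\text{i})$, divisibility of $M^1$ is equivalent to $\Ext_V^1(C,M^1)=0$ for every module $C$ of p.d.$\le 1$, which certainly includes every object $C \in \CC_V^*$. Thus the whole proposition reduces to proving $sM^1 = M^1$ for each $s \in V^\times$.

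Fix $s \in V^\times$ and $a \in M^1$; I want to produce $b \in M^1$ with $sb=a$. The idea is to exhibit an \emph{adequate} equation system over $M$ whose solution produces such a $b$, and then apply Theorem \ref{pi}$(\gamma)$. Take unknowns $x$ and $y_r$ indexed by $r \in V^\times$, and equations
$$ sx \,=\, a, \qquad r y_r - x \,=\, 0 \quad (r \in V^\times). $$
Any solution in $M$ yields $b := x$ with $sb = a$ and $b = r y_r \in rM$ for every $r \in V^\times$, so $b \in M^1$, as desired.

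It remains to verify that the system is adequate. For finite solvability, given a finite set $r_1,\dots,r_n \in V^\times$, the total order on the principal ideals of $V$ provides $r^* \in V^\times$ with $r_i \mid r^*$ for each $i$. Since $a \in M^1 \subseteq s r^* M$, write $a = s r^* c$ with $c \in M$, and set $x := r^* c$ and, choosing $t_i \in V$ with $r^* = r_i t_i$, set $y_{r_i} := t_i c$; then $sx = a$ and $r_i y_{r_i} = r^* c = x$, so the finite subsystem is solved in $M$. For consistency and for the p.d.$\le 1$ condition, let $F$ be the free $V$-module on $\{x\} \cup \{y_r : r \in V^\times\}$ and $H$ its submodule generated by the set $\mathcal{R} := \{sx\} \cup \{r y_r - x : r \in V^\times\}$. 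A direct coefficient comparison shows that $\mathcal{R}$ is $V$-linearly independent: in any relation, the $y_r$-coefficient is precisely the scalar applied to $ry_r - x$ (times $r$, hence zero forces that scalar to vanish), and then the $x$-coefficient forces the scalar on $sx$ to vanish. Therefore $H$ is free, the assignment $sx \mapsto a$, $r y_r - x \mapsto 0$ extends uniquely to a genuine homomorphism $H \to M$ (so the system is consistent), and $F/H$ has projective dimension $\le 1$.

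The only delicate point is the p.d.$\le 1$ check: one has to be sure that no ``hidden'' dependencies like $r y_r = r' y_{r'}$ force additional relations on the chosen generators of $H$. They do not, precisely because those identities are already consequences of the generators in $\mathcal{R}$ rather than new syzygies. With adequacy in hand, Theorem \ref{pi}$(\gamma)$ delivers a solution in $M$, completing the proof.
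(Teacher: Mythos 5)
Your proposal is correct and follows essentially the same route as the paper: for each $s\in V^\times$ and $a\in M^1$ the paper sets up exactly the same auxiliary system ($r_jx=a$, $r_ix_i=x$ for all $i$), checks it is adequate, solves it in $M$ by pure-injectivity, observes the solution lies in $M^1$, and concludes via the divisibility-to-$\Ext$ equivalence. The only difference is that you spell out the details the paper leaves as ``an easy calculation'' (linear independence of the relators, hence freeness of $H$ and consistency, and the finite solvability via the chain of principal ideals), all of which check out.
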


\begin{proof}  Let $\{r_i \ | \ i \in I\}$ be a list of the non-unit elements of $V^{\times}$.  We have to show that for any given  $a \in M^1$,  for each $r_j$ the equation $r_j x =a$ is solvable for $x \in M^1$. For each $j \in I$, consider the following system of linear equations
$$ r_j x = a, \quad r_ix_i=x \quad  (i \in I).$$
  An easy calculation confirms  that the p.d. of this system is $1$. Furthermore, since the chosen element $a$ is divisible by $r_jr_i$ for all indices, each system is finitely solvable in $M.$  By hypothesis, it has a solution  in $ M$. Clearly, each solution $x$ belongs to $ M^1$, so  $M^1$ is a divisible submodule.
\end{proof}

 For every \vd\ $V$ of global dimension   $\ge 2$, we exhibit an example of a pure-injective object in $\CC_V^*$ whose injective submodule is an object, but neither a subobject nor a summand  in $\CC_V^*$.

 \begin{example} \label{sd} Let $V$ be as stated.
We form the following  diagram with pure-exact rows and  commutative squares.
 \[
        \begin{CD}
            0      @>>>    A    @>>>    B @>>>    C   @>>>       0 \\
                             &&   @VVV   @VVV  @|        \\
                   0       @>>>   D @>>>    B' @>>>   C    @>>>        0\\
                     &&   @|    @VVV @VVV  \\
                       0 @>>>    D    @>>>    B'' @>>>    C'  @>>>        0\\     \end{CD}
\]
\smallskip
\nin For the top row  select a pure-exact sequence of torsion $V$-modules such that $A, B$ are of p.d.1, and p.d.$C=2$.   We can make the selection such that the first Ulm-submodule of $C$ is $0$. Embed $A$ in a divisible object $D$   as a tight submodule, and get the middle row  as a pure-exact sequence via pushout. The next step is the application of
 the embedding process of Theorem \ref{ppi} to $C$  (it works even if $C$ is not an object) to obtain a pure extension $C'$ by a module  $H$ of p.d.$1$ such that $C'$ has the property that its pure extensions by $V$-modules of p.d.$\le 1$ are splitting.  Since p.d.$H =1$, there is a module $B''$ making   (though not uniquely) the bottom sequence pure-exact and the diagram commutative.  The middle   vertical arrows are injections and the projective dimensions of $B, B'/B, B''/B'$ are all 1. Hence we have p.d.$B'' =1$ as well.  Furthermore, as a pure extension of $D$ by $C'$, the module $B''$ has the property that its pure extensions by $V$-modules of p.d.$\le 1$ are splitting. This means that $B'' $ is a pure-injective object in $\CC_V^*$.  Its  injective submodule $D$ has p.d.1, but, since p.d.$C' =2$, it is not tight in $B'' $, so it is not a summand in $\CC_V^*$.
\qed \end{example}

We close this section with an example and its corollary.

 \begin{example} \label{ex-pi}  An explicit example of a pure-injective object is a direct sum $C= \oplus_{i \in I} V/Vr$ for any non-unit $r \in V^\times$ and any index set $I$.  This follows from Proposition \ref{homog} (iii).
 \qed \end{example}

 Consequently, we can state the following corollary (the case for \fp\  objects has been stated above in Theorem \ref{pure}):

 \begin{corollary} \label{fppi}   Every $  V/Vr$-homogeneous $(r \in V^\times)$ torsion module  is $\Sigma$-pure-injective.
    \end{corollary}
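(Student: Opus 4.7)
The plan is to reduce the corollary directly to Example \ref{ex-pi} via the structural decomposition provided by Proposition \ref{homog}(ii). Recall that a $V/Vr$-homogeneous module $M$ satisfies $rM = 0$, and, as noted in the paragraph preceding Proposition \ref{homog}, each of its cyclic submodules isomorphic to $V/Vr$ is pure in $M$; since we are working in $\CC_V^*$ (so p.d.$M \le 1$), Theorem \ref{pure} makes such cyclics into direct summands. This is exactly the setup of Proposition \ref{homog}(ii), which yields a decomposition
$$M \cong \bigoplus_{i \in I} V/Vr$$
for some index set $I$.

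Once this decomposition is in hand, the result is essentially immediate. For any index set $J$, the direct sum of $|J|$ copies of $M$ satisfies
$$M^{(J)} = \bigoplus_{j \in J} M \cong \bigoplus_{(i,j) \in I \times J} V/Vr,$$
so $M^{(J)}$ is again a direct sum of copies of $V/Vr$. Example \ref{ex-pi} (which itself rests on Proposition \ref{homog}(iii) and the pure-injectivity of $D[r]$ inside a divisible $D$ given by Theorem \ref{Div[-r}) then tells us that $M^{(J)}$ is pure-injective. Since $J$ was arbitrary, $M$ is $\Sigma$-pure-injective, as claimed.

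There is no substantive obstacle; the only point worth watching is that Proposition \ref{homog}(ii) requires $M$ to have p.d.$\le 1$, which is built into our convention that $M$ is an object of $\CC_V^*$. Alternatively, one could bypass the decomposition for the $\Sigma$ part and observe directly that $M^{(J)}$ inherits $V/Vr$-homogeneity from $M$, so that proving pure-injectivity once (via Example \ref{ex-pi}) already covers all direct powers. Either way, the proof is essentially a one-line concatenation of Proposition \ref{homog}(ii) and Example \ref{ex-pi}.
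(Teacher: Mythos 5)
Your proof is correct and follows essentially the same route as the paper: both reduce the statement to the decomposition of a $V/Vr$-homogeneous object into copies of $V/Vr$ (Proposition \ref{homog}) and then invoke the pure-injectivity of such direct sums from Example \ref{ex-pi}, the $\Sigma$-part being automatic since direct powers remain direct sums of copies of $V/Vr$. Your explicit remark that Proposition \ref{homog}(ii) needs p.d.$\le 1$, supplied by the standing convention that the module is an object of $\CC_V^*$, is exactly the right point to flag.
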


  \begin{proof} If $D$ is any direct sum of copies of  $\partial_V$, then the submodule $D[r] $ is $ V/Vr$-homogeneous, so a direct sum of copies of $ V/Vr$.  Moreover, it is pure-injective by Example \ref{ex-pi}. Summands of $D[r]$ as well as finite direct sums of pure-injectives  are also pure-injective.
   \end{proof}

\medskip


 \section{ Cotorsion Modules}   \medskip

 We shall call an object $C$ {\it cotorsion} if  $\Ext^1_R(U, C)=0$ holds for all uniserial   (i.e. rank one) \tf\  objects $U$. Evidently, it suffices to demand this only for \cg\ uniserials.   Readers familiar with cotorsion theory immediately recognize that this cotorsion concept corresponds to  Warfield-cotorsion
 (where splitting is required for extensions by all \tf\ modules).    This claim will become even more transparent in light of the following  general statement.

  \begin{lemma} \label{cot}  An  object $C \in \CC_V^*$  is cotorsion if and only if it satisfies  the equation $\Ext^1_V(A, C)=0$ for all  \tf\ objects $A$.
  \end{lemma}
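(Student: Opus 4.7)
My plan is first to dispatch the easy `if' direction: every rank one torsion-free object is in particular a torsion-free object, so vanishing of $\Ext^1_V(A, C)$ for all torsion-free objects $A$ automatically implies the cotorsion property.

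For the substantial `only if' direction, I will assume $C$ is cotorsion and, given an arbitrary torsion-free object $A \in \CC_V^*$, invoke Theorem~\ref{tf1}(ii) to obtain a continuous well-ordered ascending chain of tight pure submodules $0 = A_0 < A_1 < \dots < A_\tau = A$ whose factors $A_{\sigma+1}/A_\sigma$ are of rank one and of {\rm p.d.}$\le 1$. Each such factor must be either free of rank one (i.e.\ $\cong V$) or a countably generated torsion-free uniserial object. In the former case $\Ext^1_V(V, C) = 0$ holds trivially since $V$ is projective, while in the latter case the cotorsion hypothesis applies directly to give $\Ext^1_V(A_{\sigma+1}/A_\sigma, C) = 0$.

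The conclusion will then follow by invoking Eklof's lemma (see, e.g., \cite{EM}): whenever a module admits a continuous well-ordered filtration whose successive quotients all satisfy $\Ext^1_V(-, C) = 0$, the module itself satisfies $\Ext^1_V(-, C) = 0$. The proof of that lemma is a transfinite induction using the long exact sequence of $\Ext$ at successor stages and a Mittag--Leffler style patching argument at limit stages; importantly, nothing specific to $\CC_V^*$ is used, since the $A_\sigma$ are genuine $V$-submodules of $A$ and $\Ext^1_V$ is computed in the ambient category $\CC_V$. The main (minor) obstacle is simply recognizing that Theorem~\ref{tf1}(ii) delivers a filtration in precisely the form required; no further subtlety is expected beyond classifying the two possible types of rank one factor.
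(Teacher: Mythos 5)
Your proposal is correct and follows essentially the same route as the paper: the ``if'' direction is immediate from the definition, and the ``only if'' direction filters $A$ by the continuous well-ordered chain of pure tight submodules from Theorem~\ref{tf1}(ii) and applies Eklof's theorem to conclude from the vanishing of $\Ext^1_V(-,C)$ on the rank one factors. Your extra case split between cyclic and countably generated uniserial factors is harmless but unnecessary, since the cotorsion hypothesis (together with $\Ext^1_V(V,C)=0$) already covers every rank one torsion-free object.
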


\begin{proof}  Definition settles the claim  in one direction. For the 'only if' part, assume  that $C$ is cotorsion and $A$ is   \tf\  of   p.d.$\le 1$. We know from Theorem \ref{tf1}  that  then $A$ is the union of a continuous well-ordered ascending chain of \tf\ submodules $A_\a \ (\a <\k)$ that are pure and tight in $A$ such that all the quotients $A_{\a+1}/A_\a$ are \tf\  of rank 1. We now refer to Eklof's theorem (see \cite{E}) on the extension by the union of a chain  to conclude that $A$ satisfies the quoted equation, as all the mentioned quotients in the chain satisfy it.
\end{proof}

  In order to characterize cotorsion objects in terms of solvability of systems of equations, consider a \tf\ uniserial object $U$. If it  is not cyclic, then it is generated by a countable set $\{u_n \ | \ n < \w\}$ such
that $ r_n u_{n+1} = u_n$ for some $r_n \in V^\times$ for all $n <\w$. Therefore,  an extension $B$ of module $C$ by $U$ looks like $B= \langle C, b_n\ (n < \w)\rangle $ with defining relations given by the equations  $ r_n  b_{n+1} -b_n = c_n$ for certain $c_n \in C$.  Clearly, $C$ is cotorsion if and only if $C$ is a summand of $B$ for all permissible choices of the $U$s and the $c_n$s if and only if each  consistent countable system of equations of the form
$$ r_n x_{n+1} -  x_n = c_n   \quad {\rm with}  \ c_n \in C \ (n < \w)   \leqno (6)$$
 is solvable in $C$. In the last case, a solution $x_n$ yields a complement to $C$ in $B$: the submodule generated by the elements $a_n = x_n-b_n\ (n < \w)$. This leads us to  the following theorem.

\begin{theorem}  An object $C$ is  cotorsion if and only if all consistent systems of linear equations of the form $(6)$ constructed with \tf\ uniserial objects $U$ are solvable in $C$.
\qed \end{theorem}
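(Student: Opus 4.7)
The plan is to translate the splitting of extensions $0 \to C \to B \to U \to 0$ with $U$ a torsion-free uniserial object into the solvability of systems of the form (6). I would first dispose of the trivial case $U \cong V$, for which $\Ext_V^1(V, C) = 0$ holds automatically and no equation needs to be solved; thus we may restrict attention to countably generated, non-cyclic torsion-free uniserial objects $U$. By the discussion preceding the theorem, every such $U$ admits a generating sequence $\{u_n \mid n < \omega\}$ with defining relations $r_n u_{n+1} = u_n$ for suitable $r_n \in V^\times$.

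Next, I would establish a correspondence between extensions of $C$ by such a $U$ and consistent systems of the form (6). Given an extension $B$, lift each $u_n$ to some $b_n \in B$; since $r_n b_{n+1} - b_n$ maps to $0$ in $U$, it lies in $C$, yielding elements $c_n \in C$ and a consistent system (6). Conversely, any consistent system (6) determines an extension $B = \langle C, b_n \mid n < \omega \rangle$ with precisely these defining relations, and the map $u_n \mapsto b_n + C$ produces an isomorphism $U \cong B/C$. Hence every extension of $C$ by $U$ is realized by such a system.

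The key step is to show that the extension $B$ splits if and only if the corresponding system (6) is solvable in $C$. If $\{x_n\}_{n<\omega} \subseteq C$ is a solution, set $a_n = b_n - x_n$; a direct computation gives $r_n a_{n+1} - a_n = (r_n b_{n+1} - b_n) - (r_n x_{n+1} - x_n) = c_n - c_n = 0$, so $u_n \mapsto a_n$ respects the defining relations of $U$ and yields a homomorphism $\sigma : U \to B$. Since $a_n - b_n \in C$, the composition $U \xrightarrow{\sigma} B \to B/C \cong U$ is the identity, hence $\sigma$ splits the extension. Conversely, a splitting $\sigma$ provides $a_n := \sigma(u_n)$ with $a_n - b_n \in C$, so $x_n := b_n - a_n \in C$, and the same computation shows $\{x_n\}$ solves (6).

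Assembling these pieces directly yields the theorem: $C$ is cotorsion iff $\Ext_V^1(U, C) = 0$ for all torsion-free uniserial objects $U$, equivalently (as noted in the definition) for all countably generated such $U$, which in turn is equivalent to the splitting of every extension of $C$ by any such $U$, and by the correspondence above, equivalent to the solvability in $C$ of every consistent system of the form (6) arising from a torsion-free uniserial object. I do not anticipate a real obstacle; the only point requiring care is that a solution of (6) produces a \emph{well-defined} splitting $\sigma$, and this is immediate because the relations $r_n u_{n+1} = u_n$ are, by the computation above, respected by the assignment $u_n \mapsto a_n$.
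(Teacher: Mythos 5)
Your argument is correct and is essentially identical to the paper's own, which carries out the entire proof in the paragraph preceding the theorem (the correspondence between extensions $B=\langle C, b_n\rangle$ and consistent systems (6), and the complement generated by the elements $a_n = x_n - b_n$) and then marks the theorem itself with a \qed. Your added checks (disposing of the cyclic case and verifying both directions of the splitting/solvability equivalence explicitly) only make explicit what the paper leaves to the reader.
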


 More useful information about cotorsion objects is provided by the next result.

\begin{theorem} \label{cot}  {\rm (i)} All pure-injective objects in $\CC_V^*$ are also cotorsion objects.

{ \rm (ii)} Every object  is a subobject of a  cotorsion object with \tf\ cokernel.
   \end{theorem}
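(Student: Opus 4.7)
For part (i), the plan is to observe that over a valuation domain every extension of $C$ by a torsion-free module is automatically pure-exact, whence cotorsionness drops out of pure-injectivity. Indeed, given a short exact sequence $0 \to C \to B \to U \to 0$ with $U$ a torsion-free uniserial object, the relative divisibility criterion is immediate: if $c = rb \in C \cap rB$ with $r \in V^\times$ and $b \in B$, then the image of $b$ in $U$ is annihilated by $r$ and hence vanishes, forcing $b \in C$ and $c \in rC$. Kaplansky's lemma gives $\pd B \le 1$, so $B \in \CC_V^*$, and condition $(\a)$ of Theorem \ref{pi} splits the sequence. Thus $\Ext^1_V(U, C) = 0$ for every torsion-free uniserial object $U$, which is precisely the cotorsion condition.

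For part (ii), I would imitate the pure-injective embedding of Theorem \ref{ppi}, but restrict the adjoined systems to those of the form (6) — the ones coming from extensions by torsion-free uniserial objects. Fix a regular cardinal $\k$ of uncountable cofinality with $\k > |V| + |M|$. Set $M_0 = M$, and construct a continuous well-ordered ascending chain as follows: $M_{\s+1}$ is obtained from $M_\s$ by adjoining, for each consistent system of the form (6) with constants in $M_\s$ (indexed by $i$), new unknowns $\{x_n^{(i)} \mid n<\w\}$ subject to the defining relations $r_n^{(i)} x_{n+1}^{(i)} - x_n^{(i)} = c_n^{(i)}$; at limit ordinals take unions. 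Let $C = M_\k$.

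Three claims require verification. First, $M_{\s+1}/M_\s$ is the direct sum of the torsion-free uniserial subobjects $U^{(i)}$ associated with the adjoined systems — the unknowns for distinct $i$ are mutually independent — so $M_{\s+1}/M_\s$ is torsion-free and of $\pd \le 1$; hence $M_\s$ is tight in $M_{\s+1}$, and Lemma \ref{chain} forces $\pd C \le 1$ with $M$ tight in $C$. Second, a transfinite induction confirms that every $M_\s/M$ is torsion-free: the successor step uses that a module sitting in a short exact sequence between two torsion-free modules is itself torsion-free, and the limit step is trivial. So $C/M$ is torsion-free. Third, any consistent system of form (6) over $C$ has only countably many constants, which by the uncountable cofinality of $\k$ lie in some $M_\s$ with $\s < \k$; the system is then solved in $M_{\s+1} \subseteq C$, and so $C$ is cotorsion.

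The main obstacle is preserving $\pd \le 1$ across the transfinite induction, which is exactly the role of Lemma \ref{chain} (Auslander's criterion). The decisive design choice is the restriction to systems of type (6) rather than the full class of adequate systems used in Theorem \ref{ppi}: each successor quotient is then torsion-free by construction, so $C/M$ ends up torsion-free — a property that would be destroyed if extensions by torsion uniserials were admitted into the adjunction.
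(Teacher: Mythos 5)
Your proposal is correct and takes essentially the same route as the paper: for (i) the paper's entire argument is that extensions by torsion-free modules are pure-extensions (your relative-divisibility computation is exactly the justification of this), and for (ii) the paper says to repeat the construction of Theorem \ref{ppi} with the systems (6) in place of general adequate systems, which is precisely your transfinite chain. Your write-up merely supplies the details the paper leaves to the reader, including the check that the cokernel is torsion-free.
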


  \begin{proof}  (i) is an immediate consequence of the definitions,  since all extensions by \tf\ $V$-modules are pure-extensions.

  (ii) This can be verified easily, just  copy the proof of Theorem \ref{ppi}, using (6) in place of linear systems of p.d.$\le 1$, {\it mutatis mutandis}.  (By the way, the mere embedding property follows already from (i) and Theorem \ref{ppi}.)
   \end{proof}

  The next lemma is a convincing evidence  that in some respect the cotorsion objects behave like ordinary cotorsion modules, though several relevant features are missing.

 \begin{lemma} \label{cotquot}  {\rm (i)} Extension of cotorsion by cotorsion is again cotorsion.

  {\rm (ii)} Modules of  {\rm p.d.}$1$ that are epimorphic images of cotorsion objects $($modulo tight submodules$)$ are likewise cotorsion objects.
  \end{lemma}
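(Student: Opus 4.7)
The plan is to reduce both statements to the vanishing of $\Ext^1_V(A,-)$ on all torsion-free objects $A$, which by the preceding lemma characterizes cotorsion in $\CC_V^*$, and then invoke the long exact $\Ext$-sequence in the ambient category $\CC_V$. Since we are working throughout in $\CC_V^*$, all modules in sight have projective dimension at most $1$, and this is exactly what kills the $\Ext^2$-terms that would otherwise obstruct the argument.

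For part (i), consider a short exact sequence $0 \to C' \to C \to C'' \to 0$ in $\CC_V^*$ with $C'$ and $C''$ cotorsion; note $C'$ is tight in $C$, and since both $C'$ and $C''$ have {\rm p.d.}$\le 1$ so does $C$, giving $C \in \CC_V^*$. For any torsion-free object $A \in \CC_V^*$, applying $\Hom_V(A,-)$ produces the exact fragment
$$\Ext^1_V(A,C') \longrightarrow \Ext^1_V(A,C) \longrightarrow \Ext^1_V(A,C'').$$
Both outer terms vanish by hypothesis, so $\Ext^1_V(A,C)=0$. Invoking Lemma \ref{cot} concludes that $C$ is cotorsion.

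For part (ii), let $0 \to K \to C \to M \to 0$ be exact with $C$ cotorsion, $K$ tight in $C$, and $M \in \CC_V^*$; tightness of $K$ in $C$ ensures $K$ itself has {\rm p.d.}$\le 1$. For a torsion-free object $A \in \CC_V^*$ the long exact sequence yields
$$\Ext^1_V(A,C) \longrightarrow \Ext^1_V(A,M) \longrightarrow \Ext^2_V(A,K).$$
The left term vanishes since $C$ is cotorsion, and the right term vanishes because {\rm p.d.}$A \le 1$. Hence $\Ext^1_V(A,M)=0$ for every torsion-free object $A$, and another appeal to Lemma \ref{cot} shows $M$ is cotorsion.

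The only real subtlety—and the step worth being careful about—is making sure all modules involved genuinely live in $\CC_V^*$ so that the characterization of cotorsion by torsion-free test objects applies. In (i) this requires $C$ itself to have {\rm p.d.}$\le 1$, which is automatic from tightness of $C'$ in $C$; in (ii) it requires $K$ to have {\rm p.d.}$\le 1$ so that $\pd A \le 1$ suffices to kill $\Ext^2_V(A,K)$, and this is precisely the content of the tightness assumption on the kernel. No additional machinery beyond the long exact $\Ext$-sequence is needed.
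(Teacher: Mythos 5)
Your proof is correct and is essentially the paper's own argument: the paper disposes of (i) as "obvious" (your three-term exact sequence) and proves (ii) by noting that $\Ext^1_V(A,C)\to\Ext^1_V(A,C')$ is surjective for any surjection $C\to C'$ when p.d.$A\le 1$, which is exactly your vanishing of $\Ext^2_V(A,K)$. One small correction to your closing remark: $\Ext^2_V(A,K)=0$ already follows from p.d.$A\le 1$ alone, for \emph{any} module $K$; the tightness of the kernel is not needed for that step, but rather to guarantee that the quotient $M$ has p.d.$\le 1$ and hence is an object to which the cotorsion characterization applies.
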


\begin{proof} (i) is obvious.

 (ii) This is evident  considering that if $C \to C'$ is a surjective map, then for every module $A$ of p.d.$\le 1$, the induced map $\Ext^1_V(A, C) \to \Ext^1_V(A, C')$ is also surjective.
 \end{proof}

 In order to demonstrate that not all cotorsion objects are pure-injective, take e.g. a torsion object $T$
  whose first Ulm-submodule $T^1$ is  a subobject, but not divisible.  Then the embedding process mentioned in the proof  of Theorem \ref{cot}(ii) yields a cotorsion object $\overline T$ containing $T$ as a tight submodule such that $\overline  T/T$ is \tf. This $\overline T$  cannot be not pure-injective, because its Ulm-submodule contains  its torsion submodule  $T^1$ that is    a subobject, but is not injective  (cf. Theorem \ref{Ulm}).  (Another proof can be given by displaying  an extension of a pure-injective by a pure-injective (that is necessarily cotorsion) which fails to be pure-injective.)
 \medskip

 We raise the following problem on cotorsion modules in $\CC_V^*$.

 \begin{problem} Are the Ulm-submodules of cotorsion modules cotorsion and the Ulm-factors pure-injective in $\CC_V^*$ as in the case of DVD?
\qed \end{problem}

\medskip

  \nin {\bf Acknowledgment.}  We would like to thank Luigi Salce for his numerous  helpful comments.

\medskip


\nin {\footnotesize {\bf Correction.} (by L. Fuchs) Non-standard uniserial modules appear frequently in the  study of modules over \vds, we could not avoid mentioning them in our study either. I would like to correct erroneous statements on them in the literature. In her very interesting papers on non-standard uniserials (Bull. Amer. Math. Soc. {\bf 25} (1991) and Contemporary Math. {\bf 124} (1992)) B. Osofsky stated that non-standard uniserials were investigated because of their connection to Kaplansky's problem on the existence of valuation rings that are not homomorphic images of \vds. This incorrect claim (with another mistaken statement) was restated in the review of Osofsky's first article by R. G\"obel in Math. Reviews. The fact is that the problem of existence of non-standard uniserials was raised in 1980 by L. Salce during our joint investigation of modules over \vds, and it was him who named them "non-standard". S. Shelah was told  about non-standard uniserials only at the Udine Conference in April 1984 just before the night he succeeded in establishing  their existence. Then neither Shelah nor anybody else at the well-attended conference could claim that Kaplansky's problem had been solved, since  at this point nobody suspected that it was related to non-standard uniserials.  The connection became known only three months later when we solved the Kaplansky problem, and the solution relied on non-standard uniserials (see the original solution published  in  \cite{FS1}).}


\vskip4.0pc

\end{document}